\documentclass[11pt]{article}
\usepackage{color}
\setlength{\oddsidemargin}{-0.1in}
\setlength{\evensidemargin}{-0.0in} \setlength{\textwidth}{6.6in}
\setlength{\textheight}{9.3in} \setlength{\topmargin}{-0.7in}
\usepackage{amssymb, graphicx, amsmath,epstopdf}
\usepackage{amsthm}
\usepackage{algorithm}
\usepackage{algpseudocode}
\usepackage{cases}
\usepackage{amsfonts,amsmath,amsthm,amsfonts,amssymb}
\usepackage{graphics,graphicx,subfigure}

\allowdisplaybreaks[4]

\newtheorem{theorem}{Theorem}[section]
\newtheorem{lemma}[theorem]{Lemma}

\newtheorem{remark}[theorem]{Remark}

\title{\textbf{\Large Implementation of the ADMM to Parabolic Optimal Control Problems with Control Constraints and Beyond}}
\begin{document}
\author{Yongcun Song\thanks{{{Department of Mathematics, The University of Hong Kong, Hong Kong, China. Email: ysong307$@$hku.hk.}}}
\quad Xiaoming Yuan\thanks{{{Department of Mathematics, The University of Hong Kong, Hong Kong, China. Email: xmyuan$@$hku.hk. This author was supported by the seed fund for basic research at The University of Hong Kong (project code: 201807159005) and a General Research Fund from Hong Kong Research Grants Council.}}}
\quad Hangrui Yue\thanks{{{Department of Mathematics, The University of Hong Kong, Hong Kong, China. Email: yuehangrui$@$gmail.com.}}}}
	\maketitle
	\vspace{-1.0cm}
	\begin{abstract}
\noindent\hrulefill\vskip2pt
\noindent Optimal control problems subject to both parabolic partial differential equation (PDE) constraints and additional constraints on the control variables are generally challenging, from either theoretical analysis or algorithmic design perspectives. Conceptually, the well-known alternating direction method of multipliers (ADMM) can be directly applied to such a problem. An attractive advantage of this direct ADMM application is that the additional constraint on the control variable can be untied from the parabolic PDE constraint; these two inherently different constraints thus can be treated individually in iterations. At each iteration of the ADMM, the main computation is for solving an optimal control problem with a parabolic PDE constraint while it is not interacted with the constraint on the control variable. Because of its inevitably high dimensionality after the space-time discretization, the parabolic optimal control problem at each iteration can be solved only inexactly by implementing certain numerical scheme internally and thus a two-layer nested iterative scheme is required. It then becomes important to find an easily implementable and efficient inexactness criterion to execute the internal iterations, and to prove the overall convergence rigorously for the resulting two-layer nested iterative scheme. To implement the ADMM efficiently, we propose an inexactness criterion that is independent of the mesh size of the involved discretization, and it can be executed automatically with no need to set empirically perceived constant accuracy a prior. The inexactness criterion turns out to allow us to solve the resulting optimal control problems with the only parabolic PDE constraints to medium or even low accuracy and thus saves computation significantly, yet convergence of the overall two-layer nested iterative scheme can be still guaranteed rigorously. Efficiency of this ADMM implementation is promisingly validated by preliminary numerical results. Our methodology can also be extended to a range of optimal control problems constrained by other linear PDEs such as elliptic equations, hyperbolic equations, convection-diffusion equations and fractional parabolic equations.

\bigskip

\noindent\textbf{Keywords:} Parabolic optimal control problem, control constraint, alternating direction method of multipliers, inexactness criterion, nested iteration, convergence analysis.\\
\rule{14.9cm}{0.1mm}
\end{abstract}

\section{Introduction}
Optimal control problems constrained by partial differential equations (PDEs) with additional constraints on the control and/or state variables capture important models in various areas, such as physics, chemistry, engineering, medicine and financial engineering. We refer to, e.g. \cite{glowinski1994exact, glowinski1995exact, glowinski2008exact, hinze2008optimization, lions1971optimal, troltzsch2010optimal}, for a few references. These problems are generally difficult from either theoretical analysis or algorithmic design perspectives; one reason is that the PDE constraints and other constraints on the control and/or state variables are coupled. The high dimensionality of the resulting algebraic systems after discretization further explains the lack of a rich set of efficient numerical schemes in the literature, especially for some optimal control problems with time-dependent PDE constraints. To tackle such a problem numerically, a general principle is that the structures and properties of the model should be sophisticatedly considered in algorithmic design, rather than applying some existing algorithms generically. One particular desire is to untie the PDE constraints (usually more difficult) and other constraints (usually much easier) on the control and/or state variables so that these two inherently different constraints can be treated individually in iterations.

\subsection{Model}
In this paper, we consider the following optimal control problem with a parabolic PDE constraint and a box constraint on the control variable:
\begin{equation}\label{Basic_Problem}
\begin{aligned}
\min_{u\in {\mathcal{C}}, y\in L^2(Q)} \quad  &\frac{1}{2}\iint_Q |y-y_d|^2 dxdt+\frac{\alpha}{2}\iint_{\mathcal{O}}|u|^2 dxdt\\
\end{aligned}
\end{equation}
subject to the state equation
\begin{equation}\label{state_eqn}
\left\{
\begin{aligned}
&\frac{\partial y}{\partial t}-\nu \Delta y+a_0y=u\chi_{\mathcal{O}},&\quad \text{in}\quad \Omega\times(0,T), \\
&y=0,&\quad \text{on}\quad \Gamma\times(0,T),\\
& y(0)=\varphi,&
\end{aligned}
\right.
\end{equation}
where $\Omega$ is an open bounded domain in $ \mathbb{R}^d$ $(d\geq 1)$ and $\Gamma=\partial\Omega$ is the piecewise continuous boundary of $\Omega$; $\omega$ is an open subset of $\Omega$ and $0<T<+\infty$; the domain $Q=\Omega\times (0, T)$ and $\mathcal{O}=\omega\times (0, T)$. In (\ref{Basic_Problem})--(\ref{state_eqn}), $u$ and $y$ are called the control variable and state variable, respectively. The target function $y_d$ is given in $L^2(Q)$ and the admissible set ${\mathcal{C}}$ is defined by
\begin{equation*}
{\mathcal{C}}=\{v|v\in L^\infty(\mathcal{O}), a\leq v(x; t)\leq b ~\text{a.e. in} \, \mathcal{O}\}\subset{L^2(\mathcal{O})}.
\end{equation*}
In addition, we denote by $\Delta :=\nabla\cdot\nabla $ the Laplace operator and $\chi_\mathcal{O}$ the characteristic function of the set $\mathcal{O}$. The constant $\alpha>0$ is a regularization parameter; $a$ and $b$ are given constants; the initial value $\varphi$ is given in $L^2(\Omega)$. The coefficients $a_0$   $(\geq0)\in L^{\infty}(Q)$ and $\nu$ is a positive constant. The problem (\ref{Basic_Problem})--(\ref{state_eqn}) has a wide range of applications in the areas of physics, chemistry and engineering, see, e.g., \cite{glowinski2008exact,troltzsch2010optimal}. Existence and uniqueness of the solution to the problem (\ref{Basic_Problem})--(\ref{state_eqn}) can be proved in a standard argument as studied in \cite{lions1971optimal}; we refer to \cite{troltzsch2010optimal} for the detail.

\subsection{Parabolic Optimal Control Problems without Control Constraints}

For the special case of the problem (\ref{Basic_Problem})--(\ref{state_eqn}) where $\mathcal{C}=L^2(\mathcal{O})$, i.e., there is no constraint on the control variable, the resulting problem is called an unconstrained parabolic optimal control problem and it has been well studied in some earlier literatures such as \cite{lions1971optimal} and some more recent ones such as \cite{troltzsch2010optimal}. There is a rich set of papers discussing how to solve unconstrained parabolic optimal control problems numerically; and methods in the literature can be generally categorized as the ``black-box'' and ``all-at-once'' approaches. The ``black-box'' approach commonly suggests substituting the state equation into the objective functional to eliminate the state variable $y$, and treats an unconstrained parabolic optimal control problem as an optimization problem with respect to the control variable $u$. Note that each iteration of a ``black-box'' approach requires solving the involved state equation. We refer to \cite{glowinski1994exact, glowinski2008exact} for some efficient ``black-box'' type numerical schemes for unconstrained parabolic control problems with different types of control variables. {On the other hand, the ``all-at-once'' approach keeps the state equation in the constraints, and treats both the state and control variables separately. The optimality condition of such a resulting constrained optimization problem after discretization can be represented as a linear saddle point system, which can be solved by some efficient iterative solvers such as Krylov subspace methods.} We refer to \cite{mcdonald2016all, pearson2012regularization, ulbrich2007generalized} for more details. Both ``black-box'' and ``all-at-once'' approaches can be combined with standard techniques such as domain decomposition methods and multi-grid methods to further improve their numerical performance; see, e.g., \cite{ barker2015domain, borzi2003multigrid, gander2016schwarz, heinkenschloss2005time,mathew2010analysis}, for some intensive study.

\subsection{SSN Methods for Parabolic Optimal Control Problems with Control Constraints}

In the literature, semi-smooth Newton (SSN) methods are state-of-the-art for various optimal control problems with control constraints.  For instance, SSN methods have been intensively studied for  optimal control problems with elliptic PDE constraints; see, e.g., \cite{hintermuller2002primal, hinze2008optimization, ulbrich2011semismooth} and reference therein. A common feature of SSN methods is that a semismooth Newton direction is constructed by using a generalized Jacobian in sense of Clarke (see \cite{clark1990}) and then a Newton iteration is expressed in terms of certain active set strategy which identifies the active and inactive indices iteratively in accordance with the control constraints, see, e.g., \cite{BKI99,porcelli2015preconditioning}. {In \cite{BKI99}, some adaptive strategies have been proposed to alleviate the computational load of the Newton iterations with the resulting iteratively varying coefficient matrices.}
As analyzed in \cite{hintermuller2002primal}, a SSN method with an active set strategy can be explained as the primal-dual active set (PDAS) strategy studied in \cite{BKI99} for certain problems such as linear-quadratic optimal control problems with box control constraints, including the problem (\ref{Basic_Problem})--(\ref{state_eqn}). The convergence of the PDAS approach can be found in \cite{KR2002} while some numerical results are also reported therein for parabolic boundary control problems with $d=1$. In \cite{hintermuller2002primal}, it has been proved that SSN methods possess locally superlinear convergence and usually can find high-precision solutions, on the condition that that some initial values can be deliberately chosen. Note that it is assumed by default that the resulting Newton systems should all be solved exactly to validate the theoretical analysis and hence the mentioned nice properties of SSN type methods. Computationally, it is notable that the Newton systems arising in SSN methods are usually ill-conditioned, and as commented in \cite{SW12} that ``it is never solved without the application of a preconditioner''. Seeking appropriate preconditioners so as to improve the spectral properties of the Newton systems is indeed a major factor to ensure the success of implementing a SSN type method. In the literature, e.g., \cite{HKV09,porcelli2015preconditioning,schiela2014operator,stoll2013one,ulbrich2015preconditioners}, some preconditioned iterative solvers were proposed for various SSN methods.

One motivation of considering SSN type methods for the general case of the problem (\ref{Basic_Problem})--(\ref{state_eqn}) with $\mathcal{C}\subsetneq L^2(\mathcal{O})$ is that the indicator function of the additional constraint on the control variable $u\in{\mathcal{C}}$ arising in the optimality condition of the problem (\ref{Basic_Problem})--(\ref{state_eqn}) is nonsmooth; hence gradient type methods are not applicable, see e.g., \cite{hinze2008optimization,troltzsch2010optimal}.
 But, a particular obstacle of applying SSN type methods to the problem (\ref{Basic_Problem})--(\ref{state_eqn}) is that the simple box constraint on the control variable is forced to be considered together with the main parabolic PDE (\ref{state_eqn}) simultaneously. Despite that the computational load of assembling the Newton systems can be alleviated by the adaptive strategies in \cite{BKI99}, the varying active sets require adjusting the preconditioners iteratively. Indeed, as commented in \cite{stoll2013one}, ``we have recomputed the preconditioner for every application involving a different active set'' and that ``the recomputation of the preconditioner needs to be avoided". {Hence, the simple constraint on the control variable unnecessarily complicates the Newton systems because of the request of active-set-dependent preconditioning, and this feature makes it difficult to apply SSN type methods to the problem (\ref{Basic_Problem})--(\ref{state_eqn}).}

Implementation of SSN type methods to the general case of the problem (\ref{Basic_Problem})--(\ref{state_eqn}) with $d\geq 2$ is further restrained by the inevitably high dimensionality of the resulting Newton systems. To elaborate, if we set the mesh sizes of both the time and space discretizations as $1/100$, then the dimensionality of the resulting Newton system at each iteration is order of $O(10^6)$ for $d=2$ and $O(10^8)$ for $d=3$. Hence, for some time-dependent problems such as (\ref{Basic_Problem})--(\ref{state_eqn}) with $d\geq 2$, it is not practical to solve such large-scale Newton systems either exactly or up to high precisions. It is thus necessary to discern some criterion that can be implemented easily, and to investigate the convergence if these Newton systems can only be solved up to certain levels of accuracy due to the difficulty of high dimensionality. In the literature, usually some empirically perceived constant accuracy is set a prior, and certainly fixing a constant accuracy by liberty may unnecessarily result in either too accurate computation (hence slower convergence) or too loose approximation (hence possible divergence) for the internal iterations\footnote{The same concerns also apply to the interior point methods in, e.g., \cite{PG2017}, for different types of optimal control problems.}. There seems still to lack of discussions on how to specify the inexactness criterion methodologically and how to prove the convergence of the resulting inexact executions rigorously in the literature of SSN methods. Also, as mentioned in, e.g., \cite{porcelli2015preconditioning}, some SSN methods require the accuracy for internal iterations to be increased when the mesh size for discretization becomes smaller. This essentially increases the computational load for solving the Newton systems and may significantly slow down the overall convergence if fine meshes are used to discretize the problem (\ref{Basic_Problem})--(\ref{state_eqn}).

\subsection{Conceptual Application of ADMM}

Inspired by the aforementioned difficulties in the consideration of implementing the well-studied SSN methods to the problem (\ref{Basic_Problem})--(\ref{state_eqn}), our first motivation is to design an algorithm that can treat the parabolic PDE constraint (difficult one) and the box constraint on the control variable (easy one) separately in its execution. A particular goal is that the subproblems associated with the parabolic PDE constraint arising in different iterations should have invariant coefficient matrices so that certain numerical strategy such as preconditioning can be uniformly applied. To this end, it suffices to consider the well-studied alternating direction method of multipliers (ADMM) which was first introduced by Glowinski and Marroco in \cite{glowinski1975approximation} for nonlinear elliptic problems.


Let us see how the ADMM can be applied to the problem (\ref{Basic_Problem})--(\ref{state_eqn}) and a prototype algorithm can be obtained immediately.  For this purpose, we let $S: L^2(\mathcal{O})\longrightarrow L^2(Q)$ be an affine solution operator associated with the state equation (\ref{state_eqn}); and it is defined as
\begin{equation}\label{def:S}
S(u):=y.
\end{equation}
It is clear that $S$ is bounded, continuous and compact. More properties of the operator $S$ can be referred to \cite{troltzsch2010optimal}. With $y=S(u)$, the problem (\ref{Basic_Problem})--(\ref{state_eqn}) can be rewritten as
\begin{equation*}
\min_{u\in {\mathcal{C}}} \quad  \frac{1}{2\alpha}\iint_Q |S(u)-y_d|^2 dxdt+\frac{1}{2}\iint_{\mathcal{O}}|u|^2 dxdt,
\end{equation*}
{which is actually a scaled version of the problem (\ref{Basic_Problem})--(\ref{state_eqn}).}
Further, by introducing an auxiliary variable $z\in L^2(\mathcal{O})$ such that $u=z$, the problem (\ref{Basic_Problem})--(\ref{state_eqn}) can be written as the following separable convex optimization problem
\begin{eqnarray}\label{equi_model}
\qquad\left\{
\begin{array}{lll}
\underset{(u, z)\in L^2(\mathcal{O})\times L^2(\mathcal{O})}{\min}\quad \tilde{J}(u)+I_{\mathcal{C}}(z) \\
\qquad\qquad\text{s.t.}\quad
\qquad u=z,\\
\end{array}
\right.
\end{eqnarray}
where $I_{\mathcal{C}}(\cdot)$ is the indicator function of the admissible set ${\mathcal{C}}$ and
\begin{equation}\label{reduced_objective}
\tilde{J}(u):=\frac{\gamma}{2}\iint_{Q}|S(u)-y_d|^2dxdt+\frac{1}{2}\iint_{\mathcal{O}}|u|^2dxdt,~\text{with}~\gamma=\frac{1}{\alpha}.
\end{equation}
The augmented Lagrangian functional associated with the problem (\ref{equi_model}) can be defined as
\begin{equation*}\label{augmented_Lagrangian}
L_\beta(u, z,\lambda):=\tilde{J}(u)+I_{\mathcal{C}}(z)-(\lambda,u-z)+\frac{\beta}{2}\|u-z\|^2,
\end{equation*}
in which $(\cdot,\cdot )$ and $\|\cdot\|$ are the canonical inner product and norm in $L^2(\mathcal{O})$, respectively; $\lambda\in L^2(\mathcal{O})$ is the Lagrange multiplier associated the constraint $u=z$, and $\beta>0$ is a penalty parameter.  To simplify the discussion, { the penalty parameter} is fixed throughout our discussion. Then, implementing the ADMM in \cite{glowinski1975approximation} to (\ref{equi_model}), we immediately obtain the scheme
\begin{subequations}\label{ADMM}
	\begin{numcases}
	~u^{k+1}=\arg\min_{u\in L^2(\mathcal{O})}L_\beta(u, z^{k},\lambda^k),\label{ADMM_u}\\
	z^{k+1} =\arg\min_{z\in L^2(\mathcal{O})}L_\beta(u^{k+1},z,\lambda^k),\label{ADMM_z}\\
	\lambda^{k+1} = \lambda^k-\beta(u^{k+1}-z^{k+1}).\label{ADMM_lambda}
	\end{numcases}
\end{subequations}


\subsection{Remarks on the Direct Application of ADMM}

The ADMM can be regarded as a splitting version of the classic augmented Lagrangian method (ALM) proposed in \cite{HMR69,PJD69}. At each iteration of the ALM, the subproblem is decomposed into two parts and they are solved in the Gauss-Seidel manner. A key feature of the ADMM is that the decomposed subproblems usually are much easier than the ALM subproblems and it becomes more likely to take advantage of the properties and structures of the model under investigation. Also, it generally does not require specific initial iterates to guarantee its satisfactory numerical performance. All these advantages make the ADMM a benchmark algorithm in various areas such as image processing, statistical learning, and so on; we refer to \cite{boyd2011distributed, glowinski2014alternating} for some review papers on the ADMM. In particular, the ADMM and its variants have been applied to solve some optimal control problems constrained by time-independent PDEs in, e.g.,\cite{attouch2008augmented, haoalternating, song2018two}. In \cite{GSY2019}, the ADMM was applied to parabolic optimal control problems with state constraints, and its convergence is proved without any assumption on the existence and regularity of the Lagrange multiplier. In \cite{glowinski2008exact}, the Peaceman--Rachford splitting method (see \cite{pr1955}) which is closely related to the ADMM was suggested to solve approximate controllability problems of parabolic equations numerically.

On the other hand, the ADMM is a first-order algorithm; hence its convergence is at most linear and it may not be efficient for finding very high-precision solutions. For a numerical scheme solving the problem (\ref{Basic_Problem})--(\ref{state_eqn}), total errors consist of the discretization error resulted by discretizing the model and the iteration error resulted by solving the discretized model numerically. In general, first-order numerical schemes such as the backward Euler finite difference method or piecewise constant finite element method with the step size $\tau$ is implemented for the time discretization (see e.g., \cite{glowinski2008exact,meidner2008priori}). As a result, the error order of the time discretization is $O(\tau)$ (see e.g., \cite{meidner2008priori}) and this estimate may dominate the magnitude of the total error. For such cases, pursuing too high-precision solutions of the discretized model does not help reduce the total error and it is more appropriate to just apply a first-order algorithm to find a medium-precision solution of the discretized model. This also motivates us to consider the ADMM (\ref{ADMM}) for the problem (\ref{Basic_Problem})--(\ref{state_eqn}).

\subsection{Difficulties and Goals}

It is straightforward to obtain the ADMM (\ref{ADMM}) for the problem (\ref{Basic_Problem})--(\ref{state_eqn}). But the scheme (\ref{ADMM}) is only conceptual, and it cannot be used immediately. As will be shown in Section \ref{sec:InexactADMM}, the $z$-subproblem (\ref{ADMM_z}) is easy; its closed-form solution can be computed by the projection onto the admissible set $\mathcal{C}$. But the $u$-subproblem (\ref{ADMM_u}) is essentially a standard unconstrained parabolic optimal control problem, and it can only be solved iteratively by certain existing algorithms. For instance, as studied in \cite{glowinski1994exact,glowinski2008exact}, we can apply the conjugate gradient (CG) method to solve it. Clearly, solving (\ref{ADMM_u}) dominates the computation of each iteration of the ADMM (\ref{ADMM}). Notice that the dimensionality of the time-dependent $u$-subproblem (\ref{ADMM_u}) after space-time discretization is inevitably high. Hence it is impractical to solve these subproblems too accurately. Meanwhile, there is indeed no necessity to pursue too accurate solutions for these subproblems, especially when the iterates are still far away from the solution point. Therefore, the subproblem (\ref{ADMM_u}) should be solved iteratively and inexactly, and the implementation of the ADMM (\ref{ADMM}) must be embedded by an internal iterative process for the subproblem (\ref{ADMM_u}). Interesting mathematical problems arise soon: How to determine an appropriate inexactness criterion to execute the internal iterations for solving the subproblem (\ref{ADMM_u}); and how to rigorously prove the convergence for the ADMM scheme (\ref{ADMM}) with two-layer nested iterations?

Preferably, the inexactness criterion for solving the subproblem (\ref{ADMM_u}) should be easy to implement, free of setting empirically perceived constant accuracy a prior, independent of space-time discretization mesh sizes and the regularization parameter $\alpha$, accurate enough to yield good approximate solutions which are good enough to ensure the overall convergence, yet efficient to avoid unnecessarily too accurate solutions so as to save overall computation. Moreover, though the convergence of the original ADMM has been well studied in both earlier literatures \cite{fortin1983augmented, gabay1975dual, glowinski2008lectures, glowinski1989augmented} and recent literatures \cite{he20121,he2015non}, the scheme (\ref{ADMM}) with the nested internal iterations subject to a given inexactness criterion should be analyzed from scratch. In short, our goals are: (I) proposing an easily implementable and appropriately accurate inexactness criterion for solving the subproblem (\ref{ADMM_u}) inexactly and hence an inexact version of the ADMM (\ref{ADMM}), (II) establishing the convergence for the resulting inexact ADMM rigorously, (III) specifying the inexact ADMM as concrete algorithms that are applicable to the problem (\ref{Basic_Problem})--(\ref{state_eqn}), and (IV) extending the inexact ADMM to other versions that can be used for a range of other optimal control problems.

\subsection{Organization}

The rest of this paper is organized as follows. In Section \ref{sec:InexactADMM}, we propose an inexactness criterion for the subproblem (\ref{ADMM_u}) and hence an inexact version of the ADMM for the problem (\ref{Basic_Problem})--(\ref{state_eqn}). Its strong global convergence is proved in Section \ref{sec:convergence}. In Section \ref{sec:convergence_rate}, its worst-case convergence rate measured by iteration complexity is established in both the ergodic and non-ergodic senses. We illustrate how to execute the new inexactness criterion and specify the inexact ADMM with implementation details in Section \ref{sec:implementation}. In Section \ref{sec:numerical}, some numerical results are reported to validate the efficiency of the proposed approach. {In Section \ref{se:extension}, we briefly discuss how to extend our analysis to other cases, including optimal control problems constrained by the wave equation with control constraints, and elliptic optimal control problems with control constraints.} Finally, some conclusions are made in Section \ref{sec:conclusion}.

\section{An Inexact ADMM}\label{sec:InexactADMM}
In this section, we first take a closer look at the solutions of the subproblems (\ref{ADMM_u})--(\ref{ADMM_lambda}), and then propose an inexactness criterion for solving the subproblem (\ref{ADMM_u}) iteratively. An inexact version of the ADMM (\ref{ADMM}) with two-layer nested iterations is thus proposed. For the simplicity of notations, hereinafter, we denote by $U$ and $Y$ the space $L^2(\mathcal{O})$ and $L^2(Q)$, respectively.

\subsection{Elaboration of Subproblems}

\subsubsection{Subproblem (\ref{ADMM_u})}

For the $u$-subproblem (\ref{ADMM_u}), it follows from
\begin{eqnarray*}
	L_\beta(u, z^{k}, \lambda^k)=\tilde{J}(u)
	-(\lambda^k, u-z^{k})+\frac{\beta}{2}\|u-z^{k}\|^2,
\end{eqnarray*}
that the $u$-subproblem (\ref{ADMM_u}) is equivalent to the following unconstrained parabolic optimal control problem:
\begin{equation*}
\underset{u\in U}{\min} \; j_k(u):=\tilde{J}(u)-(\lambda^k,u-z^{k})+\frac{\beta}{2}\|u-z^{k}\|^2.
\end{equation*}
Let $Dj_k(u)$ be the first-order derivative of $j_k$ at $u$. By perturbation analysis discussed in \cite{glowinski1994exact, glowinski2008exact}, we have
\begin{equation*}\label{gradient}
Dj_k(u)=u+ p|_{\mathcal{O}}+\beta(u-z^{k})-\lambda^k.
\end{equation*}
Hereafter, $p$ is the adjoint variable associated with $u$ and it is obtained from the successive solution of the following two parabolic equations:
\begin{equation}\label{dis_state}
\frac{\partial y}{\partial t}-\nu \Delta y+a_0y=u\chi_{\mathcal{O}}~ \text{in}~ \Omega\times(0,T),  \quad
y=0 ~ \text{on}~ \Gamma\times(0,T), \quad
y(0)=\varphi,
\end{equation}
and
\begin{equation}\label{dis_adjoint}
-\frac{\partial p}{\partial t}-\nu \Delta p+a_0p=\gamma(y-y_d)~ \text{in}~ \Omega\times(0,T),\quad
p=0~ \text{on}~ \Gamma\times(0,T),\quad
p(T)=0.
\end{equation}
It is clear that the equation (\ref{dis_state}) is just the state equation (\ref{state_eqn}) and it can be characterized by the operator $S$ with $y=S(u)$. Furthermore, we denote by $S^*$ the adjoint operator of $S$. Then, it is easy to derive that $S^*: L^2(Q)\longrightarrow L^2(\mathcal{O})$ satisfies $p|_{\mathcal{O}}=S^*(\gamma(y-y_d))$, where $p$ is the solution of the adjoint equation (\ref{dis_adjoint}). Then, we obtain the following first-order optimality condition of the $u$-subproblem (\ref{ADMM_u}).
\begin{theorem}\label{oc_u}
	Let $u^{k+1}$ be the unique solution of the subproblem (\ref{ADMM_u}). Then, $u^{k+1}$ satisfies
	\begin{equation}\label{oc}
	Dj_k(u^{k+1})=u^{k+1}+ p^{k+1}|_{\mathcal{O}}+\beta(u^{k+1}-z^{k})-\lambda^k=0,
	\end{equation}
	{where $p^{k+1}$ is the adjoint variable associated with $u^{k+1}$.}
\end{theorem}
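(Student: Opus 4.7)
The plan is to verify that the strongly convex functional $j_k$ admits a unique minimizer at which the Fréchet derivative vanishes, and that this derivative has the explicit form claimed. First I would note that, since $S$ is an affine continuous operator from $U$ to $Y$, the reduced tracking term $u\mapsto \frac{\gamma}{2}\iint_Q|S(u)-y_d|^2\,dxdt$ is convex and continuously differentiable; the remaining pieces $\frac{1}{2}\|u\|^2$, $-(\lambda^k,u-z^k)$, and $\frac{\beta}{2}\|u-z^k\|^2$ are strongly convex and smooth. Hence $j_k$ is strongly convex, coercive and continuous on the Hilbert space $U$, which already gives existence and uniqueness of $u^{k+1}$ in the hypothesis, as well as the equivalence between minimality and $Dj_k(u^{k+1})=0$.

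Next I would compute $Dj_k(u)$ term-by-term. The derivative of $\frac{1}{2}\|u\|^2$ is $u$; the derivative of $-(\lambda^k,u-z^k)+\frac{\beta}{2}\|u-z^k\|^2$ is $-\lambda^k+\beta(u-z^k)$. The nontrivial piece is $\Phi(u):=\frac{\gamma}{2}\iint_Q|S(u)-y_d|^2\,dxdt$. Linearising $S$ and applying the chain rule gives $D\Phi(u)=\gamma\, S^{*}(S(u)-y_d)$, where $S^{*}:Y\to U$ denotes the adjoint of the linear part of $S$.

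The key identification, which I expect to be the main technical step, is that $S^{*}(\gamma(y-y_d))=p|_{\mathcal{O}}$ with $y=S(u)$ and $p$ determined from the backward parabolic adjoint equation (\ref{dis_adjoint}). I would establish this by a standard duality calculation: pick an arbitrary direction $\delta u\in U$, let $\delta y$ denote the corresponding state sensitivity solving the linearised version of (\ref{dis_state}) with right-hand side $\delta u\,\chi_{\mathcal{O}}$, homogeneous Dirichlet data, and zero initial condition; then multiply (\ref{dis_adjoint}) by $\delta y$, integrate over $Q$, and use integration by parts in time together with Green's formula in space. The boundary contributions vanish because of the matching conditions $\delta y(0)=0$, $p(T)=0$, and the homogeneous Dirichlet data, leaving $\iint_{\mathcal{O}} p\,\delta u\,dxdt=\iint_Q \gamma(y-y_d)\,\delta y\,dxdt$, i.e.\ $(p|_{\mathcal{O}},\delta u)_U=(\gamma(y-y_d),S'(u)\delta u)_Y$ for all $\delta u$, which is exactly the claimed identity. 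Since this calculation is standard and explicitly carried out in \cite{glowinski1994exact,glowinski2008exact}, I would cite it rather than reproduce it in full.

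Assembling the three contributions yields $Dj_k(u)=u+p|_{\mathcal{O}}+\beta(u-z^k)-\lambda^k$, and applying the first-order condition at the unique minimizer $u^{k+1}$ (with the corresponding adjoint state $p^{k+1}$ obtained by solving (\ref{dis_state})--(\ref{dis_adjoint}) with $u=u^{k+1}$) gives precisely (\ref{oc}). The main obstacle, as indicated above, lies solely in the adjoint identification; once that is in hand the remainder is routine convex-analysis bookkeeping.
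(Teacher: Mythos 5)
Your proposal is correct and follows essentially the same route as the paper, which likewise reduces the subproblem to minimizing the strongly convex functional $j_k$, computes $Dj_k(u)=u+p|_{\mathcal{O}}+\beta(u-z^{k})-\lambda^k$ via the adjoint identification $p|_{\mathcal{O}}=S^*(\gamma(S(u)-y_d))$, and appeals to the perturbation analysis in \cite{glowinski1994exact, glowinski2008exact} for the duality calculation. The only difference is that you spell out the integration-by-parts argument that the paper delegates entirely to those references.
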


\subsubsection{Remark on $\beta$}

According to (\ref{oc}), $Dj_k(u^{k+1})$ consists of the minimization of $\tilde{J}(u)$ and the satisfaction of the constraint on the control variable. It is natural to consider choosing some value that is not different from $1$ for $\beta$ so that these two objectives can be well balanced.
Our numerical experiments show that, $\beta=2$ or $3$, is usually a good choice to generate robust and fast numerical performance. Also, because of this reason, we reformulate the original problem (\ref{Basic_Problem})--(\ref{state_eqn}) as (\ref{equi_model}) with a scaled objective functional $\tilde{J}(u)$. If no scaling is considered, it is easy to show that the optimality condition of the corresponding $u$-subproblem reads
\begin{equation}\label{oc_unscaled}
\alpha (u^{k+1}+ p^{k+1}|_{\mathcal{O}})+\beta(u^{k+1}-z^{k})-\lambda^k=0,
\end{equation}
and it implies that the penalty parameter $\beta$ should be close to $\alpha$ in order to balance the two objectives in (\ref{oc_unscaled}). Since $\alpha$ is generally very small (e.g., less than $10^{-3}$), $\beta$ is also forced to be small for this case. According to our numerical experiments, too small values of $\beta$ may easily cause some stability and round-off problems in numerical implementation, and they also easily result in unbalanced magnitudes for the primal variables $u$ and $z$, and the dual variable $\lambda$. All these issues are inclined to deteriorate convergence of the ADMM.

\subsubsection{Subproblem (\ref{ADMM_z})}

For the $z$-subproblem (\ref{ADMM_z}), notice that
\begin{equation*}
L_\beta(u^{k+1},z, \lambda^k)=\tilde{J}(u^{k+1})+I_{\mathcal{C}}(z)-(\lambda^k, u^{k+1}-z)
+\frac{\beta}{2}\|u^{k+1}-z\|^2,
\end{equation*}
which implies that
\begin{equation*}\label{z_sub}
z^{k+1}=\arg\min_{z\in U} I_{\mathcal{C}}(z)-(\lambda^k, u^{k+1}-z)
+\frac{\beta}{2}\|u^{k+1}-z\|^2.
\end{equation*}
Hence, $z^{k+1}$ is given by
\begin{equation}\label{soln_z}
z^{k+1}=P_{\mathcal{C}}(u^{k+1}-\frac{\lambda^k}{\beta}),
\end{equation}
where $P_{\mathcal{C}}(\cdot)$ denotes the projection onto the admissible set ${\mathcal{C}}$:
$$
P_{\mathcal{C}}(v):=\max\{a,\min\{v, b\}\},\forall v\in U.
$$

\subsection{Inexactness Criterion}\label{se:out_iter}

In this subsection, we propose an inexactness criterion that achieves the mentioned goals, and an inexact version of the ADMM (\ref{ADMM}) is obtained for the problem(\ref{Basic_Problem})--(\ref{state_eqn}). Various inexact versions of the ADMM in different settings can be found in the literature. For example, inexact versions of the ADMM for the generic case have been discussed in \cite{Eckstein1992douglas, Eckstein2017relative, Ng2011inexact, yuan2005improvement}. These works require summable conditions on the sequence of accuracy (represented in terms of either the absolute or relative errors). Such a condition forces the subproblems to be solved with increasing accuracy and requires specifying the accuracy (indeed an infinite series of constants) a prior; both are difficult to be realized practically. A particular inexact version is the so-called proximal ADMM in, e.g., \cite{bredies2015preconditioned, He2002new}, which adds appropriate quadratic terms to regularize the subproblems and may alleviate these subproblems for some cases by specifying the proximal terms appropriately. Because of the different and much more difficult setting in the problem (\ref{Basic_Problem})--(\ref{state_eqn}), however, a specific criterion tailored for the subproblem (\ref{ADMM_u}) should be found in order to solve it more efficiently.

Recall that the optimality condition of the $u$-subproblem (\ref{ADMM_u}) can be characterized by (\ref{oc}).
Since the $u$-subproblem (\ref{ADMM_u}) is strongly convex, the above necessary condition is also sufficient. Therefore, if $\tilde{u}\in U$ satisfies $D{j_k}(\tilde{u})=0$, then $\tilde{u}$ is the unique solution of the $u$-subproblem (\ref{ADMM_u}). To propose an inexactness criterion, we define $e_k(u)$ as
\begin{equation}\label{eu_oper}
e_{k}(u):= (1+\beta) u+S^*(\gamma(S(u)-y_d))-\beta z^{k}- \lambda^k.
\end{equation}
It follows from the definitions of the solution operator $S$ and its adjoint operator $S^*$ that $e_k(u)$ can be written as
\begin{equation}\label{def_eu}
e_{k}(u)= (1+\beta) u+p|_{\mathcal{O}}-\beta z^{k}- \lambda^k,
\end{equation}
{where $p$ is the adjoint variable associated with $u$.}

It is clear that $e_k(u)=Dj_k(u)$ and $u^{k+1}$ is the solution of the $u$-subproblem (\ref{ADMM_u}) at the $(k+1)$-th iteration if and only if $e_{k}(u^{k+1})=0$. Hence, we can use $e_{k}(u)$ as a residual for the $u$-subproblem (\ref{ADMM_u}). With the help of $e_{k}(u)$, we propose the following inexactness criterion. For a given constant $\sigma$ satisfying
\begin{equation}\label{sigma}
0<\sigma<\frac{\sqrt{2}}{\sqrt{2}+\sqrt{\beta}}\in(0,1),
\end{equation}
we compute $u^{k+1}$ such that
\begin{equation}\label{inexact_criterion}
\|e_{k}(u^{k+1})\|\leq\sigma\|e_{k}(u^{k})\|.
\end{equation}

The inexactness criterion (\ref{inexact_criterion}) is mainly inspired by our previous work \cite{yue2018implementing}, and it keeps all advantageous features of the criterion in \cite{yue2018implementing}. Meanwhile, the problem (\ref{Basic_Problem})--(\ref{state_eqn}) in an infinite-dimensional Hilbert space is much more complicated than the LASSO model considered in \cite{yue2018implementing}, and it is worthy to elaborate on the details of executing the inexactness criterion (\ref{inexact_criterion}). Indeed, the residual $e_k(u)$ in (\ref{def_eu}) is derived from the first-order derivative of $j_k(u)$. Conceptually, the computation of $e_k(u)$ requires the solutions of the state equation (\ref{state_eqn}) and the adjoint equation (\ref{dis_adjoint}). Practically, the residual $e_k(u)$ can be calculated easily by certain iterative scheme, see Algorithm \ref{AbstractCG} for the detail of implementing the CG method.

\begin{remark}\label{remark-criterion}
We reiterate that the inexactness criterion (\ref{inexact_criterion}) can be checked by current iterates and it can be executed automatically during iterations. There is no need to set any empirically perceived constant accuracy a prior, and it is independent of the mesh sizes for discretization. Also, the relative error $\|e_{k}(u^{k+1})\|/\|e_{k}(u^{k})\|$ is controlled by the constant $\sigma$ (instead of summable sequences as proposed in many ADMM literatures) and it does not need to tend to zero (hence, increasing accuracy can be avoided in iterations). All these features make the inexactness criterion (\ref{inexact_criterion}) easily implementable and more likely to save computation.
\end{remark}

\subsection{An Inexact Version of the ADMM (\ref{ADMM}) for (\ref{Basic_Problem})--(\ref{state_eqn})}

Based on the previous discussion, an inexact version of the ADMM (\ref{ADMM}) with the inexactness criterion (\ref{inexact_criterion}) can be proposed for the problem (\ref{Basic_Problem})--(\ref{state_eqn}).

\begin{algorithm}[h]\caption{{An Inexact Version of the ADMM (\ref{ADMM}) for (\ref{Basic_Problem})--(\ref{state_eqn})}}
	\begin{algorithmic}[0]
		\Require $\{u^0,z^0,\lambda^0\}^\top\in U\times U\times U$, $\beta>0$ and $0<\sigma<\frac{\sqrt{2}}{\sqrt{2}+\sqrt{\beta}}\in(0,1)$.
		\While{not converged}
		\State Compute
		$
		e_{k}(u^k)=(1+\beta)u^k+ p^k|_{\mathcal{O}}-\beta z^{k}-\lambda^k.
		$
		\State Find $u^{k+1}$ such that
		\begin{equation*}
		\|e_{k}(u^{k+1})\|\leq\sigma\|e_{k}(u^{k})\|,~\text{with}~e_{k}(u^{k+1})=(1+\beta)u^{k+1}+ p^{k+1}
		|_{\mathcal{O}}-\beta z^{k}-\lambda^k.
		\end{equation*}
		\State Update the variable $z^{k+1}$:
		$
		z^{k+1}=P_{\mathcal{C}}(u^{k+1}-\frac{\lambda^k}{\beta}).
		$
		\State Update the Lagrange multiplier $\lambda^{k+1}$:
		$
		\lambda^{k+1} = \lambda^k-\beta(u^{k+1}-z^{k+1}).
		$
		\EndWhile
	\end{algorithmic}\label{InADMM_algorithm}
\end{algorithm}

\section{Convergence Analysis}\label{sec:convergence}

In this section, we prove the strong global convergence for Algorithm \ref{InADMM_algorithm}. Though there are many works in the literature studying the convergence of the ADMM and its variants, the convergence of Algorithm \ref{InADMM_algorithm} should be proved from scratch because of the specific inexactness criterion (\ref{inexact_criterion}) and the setting of the problem (\ref{Basic_Problem})--(\ref{state_eqn}). In particular, the proof is essentially different from that in \cite{yue2018implementing}, despite of some common ideas in the respective stopping criteria. Note that the strong global convergence to be obtained is because of the strong convexity of the objective functional $\tilde{J}(u)$ in (\ref{equi_model}), which is usually absent for many other problems such as the LASSO model considered in \cite{yue2018implementing}.

\subsection{Preliminary}

To present our analysis in a compact form, we denote $w\in W:=U\times U\times U$, $v \in V:= U\times U$ and the function $F(w)$ as follows:
\begin{equation}\label{notation}
w = \begin{pmatrix} u\\ z \\ \lambda \end{pmatrix},
v=\begin{pmatrix} z\\ \lambda \end{pmatrix},\,
\hbox{and}~ F(w) = \begin{pmatrix}D\tilde{J}(u)- \lambda \\ \lambda \\ u-z \end{pmatrix},
\end{equation}
where $D\tilde{J}(u)$ is the first-order derivative of $\tilde{J}(u)$.
We also define the norm
\begin{equation}\label{Hnorm}
\|v\|_H=\sqrt{(v,Hv)}:=\sqrt{\beta\| z\|^2+\frac{1}{\beta}\|\lambda\|^2}, \quad\forall v\in V,
\end{equation}
which is induced by the matrix operator
\begin{equation*}
H=\begin{pmatrix}
\beta I & 0\\
0&\frac{1}{\beta}I
\end{pmatrix}.
\end{equation*}
With these notations, it is easy to see that the problem (\ref{equi_model}) can be characterized as the following variational inequality: find $w^*=\left( u^*,z^*,\lambda^*\right)^\top \in W$ such that
\begin{eqnarray}\label{Algo_General_VI}
{\hbox{VI}}(W,\mathcal{C}, F) \!\!\!\!&:&  I_{\mathcal{C}}(z) - I_{\mathcal{C}}(z^*) + ( w - w^* , F(w^*)) \ge 0,\  \forall w \in W.
\end{eqnarray}
We denote by $W^*$ the solution set of the variational inequality (\ref{Algo_General_VI}); and
it is easy to show that the solution set $W^*$ is a singleton.

From the definition of $\tilde{J}$ in (\ref{reduced_objective}), we know that it is strongly convex, i.e.
\begin{equation}\label{str_con_of_J}
\|u-v\|^2\le ( u - v , D\tilde{J}(u)- D\tilde{J}(v)),\; \forall u, v\in U.
\end{equation}
In addition, one can show that $D\tilde{J}$ is Lipschitz continuous. Indeed, one has
$$
D\tilde{J}(u)=u+p|_{\mathcal{O}},
$$
where $p$ is the adjoint variable associated with $u$. We introduce a linear operator $\bar{S}: U\longrightarrow Y$ such that
 \begin{equation}\label{def_barS}
S(v)=\bar{S}v+S(0),\quad\forall v\in U.
\end{equation}
Then, we can derive that
\begin{equation}\label{property_of_J}
( u - v , D\tilde{J}(u)- D\tilde{J}(v))\le \kappa \|u-v\|^2,\; \forall u, v\in U,
\end{equation}
where $\kappa=1+\gamma\|\bar{S}^*\bar{S}\|$.

\subsection{Optimality Conditions}
Recall that in Algorithm \ref{InADMM_algorithm}, the $u$-subproblem (\ref{ADMM_u}) is inexactly solved subject to the inexactness criterion (\ref{inexact_criterion}), and the $z$-subproblem (\ref{ADMM_z}) and $\lambda$-subproblem (\ref{ADMM_lambda}) can be solved exactly. Hence, for the sequence $w^{k+1}=(u^{k+1},z^{k+1},\lambda^{k+1})^\top$ generated by Algorithm \ref{InADMM_algorithm}, the first-order optimality conditions can be expressed as:
\begin{subequations}\label{optimality}
	\begin{numcases}
	~ D_{u} {L}_{\beta}( u^{k+1},z^k,\lambda^k )=e_k(u^{k+1}),\label{optimality_u}\\
	~I_{\mathcal{C}}(z) - I_{\mathcal{C}}(z^{k+1}) + ( z-z^{k+1},\lambda^k-\beta (u^{k+1}-z^{k+1}))\geq 0,\; \forall z\in U ,\label{optimality_z}\\
	~\lambda^{k+1}=\lambda^{k}-\beta(u^{k+1}-z^{k+1}),\label{optimality_lam}
	\end{numcases}
\end{subequations}
where $D_u{L}_{\beta}( u^{k+1},z^k,\lambda^k )$ is the first-order partial derivative of ${L}_{\beta}\left(u, z,\lambda \right)$ with respect to $u$ at $\left( u^{k+1},z^k,\lambda^k \right)^\top$.

To prove the convergence of Algorithm \ref{InADMM_algorithm}, it is crucial to analyze the residual $e_k (u^{k+1})$. It follows from (\ref{def_eu}) and (\ref{inexact_criterion}) that
\begin{eqnarray}\label{ek1}
\begin{aligned}
\| e_{k}(u^{k+1})\|&\le \sigma  \| e_{k}(u^{k})\|=\sigma\| e_{k-1}(u^{k})+\beta z^{k-1}+\lambda^{k-1}-\beta z^{k}-\lambda^k\|\\
&\le\sigma \| e_{k-1}(u^{k})\|+\sigma \|\beta z^{k-1}+\lambda^{k-1}-\beta z^{k}-\lambda^k\|.
\end{aligned}
\end{eqnarray}
In addition, it follows from (\ref{optimality_z}) that
\begin{equation}\label{b1}
I_{\mathcal{C}}(z^k) - I_{\mathcal{C}} (z^{k+1}) + ( z^k- z^{k+1},  \lambda^k - \beta ( u^{k+1}-z^{k+1} ) ) \ge 0,
\end{equation}	
and
\begin{equation}\label{b2}
I_{\mathcal{C}}(z^{k+1}) - I_{\mathcal{C}} (z^{k}) + ( z^{k+1}- z^{k},  \lambda^{k-1} - \beta ( u^{k}-z^{k} ) ) \ge 0.
\end{equation}
Adding (\ref{b1}) and (\ref{b2}) together, we have
\begin{equation}\label{convex_g}
( z^{k+1}-z^k,\lambda^{k+1}-\lambda^k) \le 0.
\end{equation}
Then, it follows from (\ref{ek1}) and (\ref{convex_g}) that
\begin{eqnarray}\label{Basic_equation_3}
\begin{aligned}
\| e_{k}(u^{k+1})\|
\le&\sigma \| e_{k-1}(u^{k})\|+\sigma\left(\|\beta z^{k-1}-\beta z^{k}\|^2+\|\lambda^{k-1}-\lambda^k\|^2\right)^{\frac{1}{2}}\\
=&\sigma \| e_{k-1}(u^{k})\|+\sigma \sqrt{\beta} \|v^{k}-v^{k-1}\|_{H}.
\end{aligned}
\end{eqnarray}
Moreover, we note that the condition (\ref{sigma})
implies that
$$
0<\frac{\beta}{2}\frac{\sigma^2}{(1-\sigma)^2}=\left(\frac{\sigma}{2(1-\sigma)}\right)\left(\frac{\beta\sigma}{1-\sigma}\right)<1,
$$
then there exits a constant $\mu>0$ such that
\begin{equation}\label{Theory_Contractive_6}
(1-\frac{\mu}{2}\frac{\sigma}{1-\sigma})>0 \quad\text{and}\quad (1-\frac{1}{\mu}\frac{\sigma}{1-\sigma}\beta)>0.
\end{equation}
These inequalities will be used later.

\subsection{Convergence}
With above preparations, we are now in a position to prove the convergence for Algorithm \ref{InADMM_algorithm}. To simplify the notation, let us introduce an auxiliary variable $\bar{w}^k$ as
\begin{equation}\label{wbar}
\bar{w}^k=\begin{pmatrix}\bar{u}^k\\ \bar{z}^{k}\\ \bar{\lambda}^k \end{pmatrix}=\begin{pmatrix}u^{k+1}\\ z^{k+1}\\ \lambda^k-\beta(u^{k+1}-z^k) \end{pmatrix}.
\end{equation}
The role of $\bar{w}^k$ is just for simplifying the notation in our analysis; it is not required to be computed for implementing Algorithm \ref{InADMM_algorithm}. Next, we prove some results which will be useful in the following discussion.

First of all, we analyze how different the point $\bar{w}^k$ defined in (\ref{wbar}) is from the solution $w^*$ of (\ref{Algo_General_VI}) and how to quantify this difference by iterates generated by Algorithm \ref{InADMM_algorithm}.
\begin{lemma}\label{P1}
	Let $\left\{ w^k \right\}=\{(u^k,z^k,\lambda^k)^\top\}$ be the sequence generated by Algorithm \ref{InADMM_algorithm} and $\{\bar{w}^k\}=\{(\bar{u}^k,\bar{z}^k,\bar{\lambda}^k)^\top\}$ be defined as in (\ref{wbar}). Then, for all $w\in W$, one has
\begin{eqnarray}
\begin{aligned}\label{Theory_Contractive_1}
&I_{\mathcal{C}}(\bar{z}^k)-I_{\mathcal{C}}(z)+ ( \bar{w}^k - w , F ( \bar{w}^k) )  \le~  \frac{1}{2} \left(\| v^k - v \|_{H}^2 - \| v^{k+1} - v \|_{H}^2 - \| v^{k} - v^{k+1} \|_{H}^2 \right)\\
&~~~~~~~~~~~~~~~~~~~~~~~~~~~~~~~~~~~~~~~~~~~~~  + \left( u^{k+1} - u , D_{u} {L}_{\beta}( u^{k+1},z^k,\lambda^k )\right).
\end{aligned}
\end{eqnarray}
\end{lemma}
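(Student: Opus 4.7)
The plan is to expand $(\bar{w}^k-w,F(\bar{w}^k))$ componentwise using the definitions in (\ref{notation}) and (\ref{wbar}), substitute the optimality conditions (\ref{optimality_u})--(\ref{optimality_lam}), and then recognize that the surviving cross terms assemble into the $H$-norm telescoping expression on the right-hand side. The strong convexity (\ref{str_con_of_J}) is not needed here; only the affine structure of $F$ and the variational characterizations of the three subproblems matter.

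First I would write
\[
(\bar{w}^k-w,F(\bar{w}^k)) = (u^{k+1}-u,D\tilde{J}(u^{k+1})-\bar{\lambda}^k)+(z^{k+1}-z,\bar{\lambda}^k)+(\bar{\lambda}^k-\lambda,u^{k+1}-z^{k+1}).
\]
For the first summand I use the identity $D_u L_\beta(u^{k+1},z^k,\lambda^k)=D\tilde{J}(u^{k+1})-\lambda^k+\beta(u^{k+1}-z^k)=D\tilde{J}(u^{k+1})-\bar{\lambda}^k$, which turns that summand into $(u^{k+1}-u,D_u L_\beta(u^{k+1},z^k,\lambda^k))$, exactly matching the last term in (\ref{Theory_Contractive_1}) and absorbing the inexact residual $e_k(u^{k+1})$.

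Next I would combine the $z$-optimality (\ref{optimality_z}), written in the form $I_{\mathcal{C}}(z)-I_{\mathcal{C}}(z^{k+1})+(z-z^{k+1},\lambda^{k+1})\geq 0$ (after substituting the $\lambda$-update (\ref{optimality_lam})), with the second summand. This gives
\[
I_{\mathcal{C}}(\bar{z}^k)-I_{\mathcal{C}}(z)+(z^{k+1}-z,\bar{\lambda}^k)\leq (z^{k+1}-z,\bar{\lambda}^k-\lambda^{k+1})=\beta(z-z^{k+1},z^{k+1}-z^k),
\]
using the direct computation $\bar{\lambda}^k-\lambda^{k+1}=\beta(z^k-z^{k+1})$. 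For the third summand I rewrite $u^{k+1}-z^{k+1}=(\lambda^k-\lambda^{k+1})/\beta$ via (\ref{optimality_lam}) and split $\bar{\lambda}^k-\lambda=(\lambda^{k+1}-\lambda)+\beta(z^k-z^{k+1})$ to obtain
\[
(\bar{\lambda}^k-\lambda,u^{k+1}-z^{k+1})=\tfrac{1}{\beta}(\lambda^{k+1}-\lambda,\lambda^k-\lambda^{k+1})+(z^k-z^{k+1},\lambda^k-\lambda^{k+1}).
\]

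Adding the three pieces and invoking (\ref{convex_g}) to discard the nonpositive cross term $(z^k-z^{k+1},\lambda^k-\lambda^{k+1})$, the remaining quadratic part is
\[
\beta(z-z^{k+1},z^{k+1}-z^k)+\tfrac{1}{\beta}(\lambda^{k+1}-\lambda,\lambda^k-\lambda^{k+1}).
\]
Finally I would apply the elementary identity $(a-b,b-c)=\tfrac{1}{2}(\|a-c\|^2-\|a-b\|^2-\|b-c\|^2)$ in each of the two inner products, separately with the weights $\beta$ and $1/\beta$ induced by $H$; by the definition of $\|\cdot\|_H$ in (\ref{Hnorm}) the sum collapses to $\tfrac{1}{2}(\|v^k-v\|_H^2-\|v^{k+1}-v\|_H^2-\|v^k-v^{k+1}\|_H^2)$, giving (\ref{Theory_Contractive_1}). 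The only subtle step is keeping the signs straight when rewriting $\bar{\lambda}^k-\lambda^{k+1}$ and when applying (\ref{convex_g}); everything else is routine algebra combined with the three optimality conditions.
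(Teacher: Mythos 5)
Your proposal is correct and follows essentially the same route as the paper's proof: componentwise expansion of $( \bar{w}^k - w , F(\bar{w}^k))$, identification of the $u$-term with $D_u L_\beta(u^{k+1},z^k,\lambda^k)=D\tilde{J}(u^{k+1})-\bar{\lambda}^k$, absorption of the indicator terms via the $z$-optimality condition (\ref{optimality_z}), elimination of the cross term $(z^k-z^{k+1},\lambda^k-\lambda^{k+1})$ by (\ref{convex_g}), and the polarization identity to produce the $H$-norm telescoping. The only cosmetic difference is that the paper works with the sign-reversed inequality and the identity in the form $(a-c,b-c)=\tfrac{1}{2}(\|a-c\|^2-\|a-b\|^2+\|b-c\|^2)$, which is equivalent to your arrangement; your observation that strong convexity is not needed in this lemma is also consistent with the paper.
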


\begin{proof}We first rewrite $D_{u} {L}_{\beta} \left( u^{k+1},u^k,\lambda^k \right)$ as
	\begin{eqnarray}
	D_{u} {L}_{\beta}( u^{k+1}, z^k, \lambda^k ) =D\tilde{J}(u^{k+1})-  ( \lambda^k - \beta ( u^{k+1} -z^k ) )
	= D\tilde{J}(u^{k+1})- \bar{\lambda}^k,\nonumber
	\end{eqnarray}
	with which we obtain, for all $w\in W$, that
	{\begin{eqnarray}
		\begin{aligned}\label{Basic_inequality1}
		&I_{\mathcal{C}}(z) -I_{\mathcal{C}}(\bar{z}^{k}) +( w - \bar{w}^k , F ( \bar{w}^k ) )\\
		=~& ( u - u^{k+1} , D\tilde{J}(u^{k+1}) -\bar{\lambda}^k )\\
		&+ I_{\mathcal{C}}(z) - I_{\mathcal{C}}(z^{k+1})+ ( z - z^{k+1} , \bar{\lambda}^k )+( \lambda - \bar{\lambda}^k , u^{k+1}-z^{k+1})\\
		=~& ( u - u^{k+1} , D_u {L}_{\beta} ( u^{k+1},z^k,\lambda^k ) )+ ( z - z^{k+1}, \lambda^k -\beta (u^{k+1} -z^{k+1}  ) ) \\
		&  + I_{\mathcal{C}}(z) - I_{\mathcal{C}}(z^{k+1}) + \beta ( z - z^{k+1} , z^{k} - z^{k+1} ) + \frac{1}{\beta}( \lambda - \bar{\lambda}^k , \lambda^k - \lambda^{k+1}) \\
		\overset{(\ref{optimality_z})}{\ge}& ( u - u^{k+1} , D_u {L}_{\beta} ( u^{k+1},z^k,\lambda^k) )+ \beta ( z - z^{k+1} , z^{k} - z^{k+1} )\\
		&+ \frac{1}{\beta}(  \lambda - \lambda^{k+1}, \lambda^k - \lambda^{k+1}) + \frac{1}{\beta} (  \lambda^{k+1} - \bar{\lambda}^k , \lambda^k - \lambda^{k+1} ).
		\end{aligned}
		\end{eqnarray}}
Applying the identity
	\begin{equation}\label{Basic_equation_{L^2(O_r)}}
	( a - c , b - c) = \frac{1}{2} \left( \| a - c \|^2 - \| a - b \|^2 + \| b - c \|^2 \right)
	\end{equation}
	to (\ref{Basic_inequality1}), we have
	{\small
		{\begin{eqnarray}
			&& I_{\mathcal{C}}(z) - I_{\mathcal{C}}(\bar{z}^{k}) +( w - \bar{w}^k , F ( \bar{w}^k ) )\nonumber \\
			&\overset{(\ref{Basic_equation_{L^2(O_r)}})}{\ge}& ( u - u^{k+1} , D_u {L}_{\beta} ( u^{k+1},z^k,\lambda^k ) )
			+ \frac{\beta}{2} \left(\| z - z^{k+1} \|^2 - \| z - z^{k}  \|^2 + \| z^k - z^{k+1}  \|^2 \right) \nonumber \\
			& & \quad + \frac{1}{2\beta} \left( \| \lambda - \lambda^{k+1} \|^2 - \|  \lambda - \lambda^{k} \|^2 + \| \lambda^k - \lambda^{k+1} \|^2 \right) - ( z^k - z^{k+1}, \lambda^k - \lambda^{k+1} ) \nonumber \\
			&\overset{(\ref{convex_g})}{\ge}&( u - u^{k+1} , D_u {L}_{\beta} ( u^{k+1},z^k,\lambda^k ) )
			+ \frac{\beta}{2} \left(\| z - z^{k+1} \|^2 - \| z - z^{k}  \|^2 + \| z^k - z^{k+1}  \|^2 \right) \nonumber \\
			& & \quad + \frac{1}{2\beta} \left( \| \lambda - \lambda^{k+1} \|^2 - \|  \lambda - \lambda^{k} \|^2 + \| \lambda^k - \lambda^{k+1} \|^2 \right), \quad \forall w \in W. \label{Theory_Contractive_3}
			\end{eqnarray}}
	}
Using the definition of $H$-norm in (\ref{Hnorm}), the result (\ref{Theory_Contractive_3}) can be rewritten as (\ref{Theory_Contractive_1}) and the proof is complete.\end{proof}
The difference between the inequality (\ref{Theory_Contractive_1}) and the variational inequality reformulation (\ref{Algo_General_VI}) reflects the difference of the point $\bar{w}^k$
from the solution point $w^*$. For the right-hand side of (\ref{Theory_Contractive_1}), the first three terms are quadratic and they are easy to manipulate over different indicators by algebraic operations, but it is not that explicit how the last crossing term can be controlled towards the eventual goal of proving the convergence of the sequence $\{w^k\}$. We thus look into this term particularly and show that the sum of these crossing terms over $K$ iterations can be bounded by some quadratic terms as well. This result is summarized in the following lemma.
\begin{lemma}\label{P2}
	Let $\left\{ w^k \right\}=\{(u^k,z^k,\lambda^k)^\top\}$ be the sequence generated by Algorithm \ref{InADMM_algorithm}. For any integer $K>0$ and $\mu$ satisfying (\ref{Theory_Contractive_6}), one has
	{ \begin{equation}\label{Key_3}
	\begin{aligned}
	\sum_{k=1}^K( u^{k+1} - u , D_{u} {L}_{\beta}( u^{k+1},z^k,\lambda^k ))
	\le  \frac{\mu}{2}\sum_{k=1}^K\frac{\sigma}{1-\sigma}\| u^{k+1}-u\|^2+ \frac{1}{2\mu}\sum_{i=1}^{K-1}\frac{\sigma}{1-\sigma} \beta \|v^{i}-v^{i+1}\|_{H}^2\\
	+\frac{1}{2\mu} \frac{\sigma}{1-\sigma} \left[\| e_{0}(u^{1})\|+\sqrt{\beta} \|v^0-v^1\|_{H} \right]^2, \forall u\in U.
	\end{aligned}
	\end{equation}}
\end{lemma}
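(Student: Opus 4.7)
\textbf{Proof plan for Lemma \ref{P2}.}

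The plan is to first use the optimality condition (\ref{optimality_u}) to replace $D_u L_\beta(u^{k+1},z^k,\lambda^k)$ by the computable residual $e_k(u^{k+1})$, then attack the inner product $( u^{k+1}-u,\, e_k(u^{k+1}))$ by a Cauchy--Schwarz/Young's inequality split that already produces the desired coefficient $\frac{\sigma}{1-\sigma}$ on the $\|u^{k+1}-u\|^2$ side. Concretely, for any $\mu>0$,
\begin{equation*}
( u^{k+1}-u,\, e_k(u^{k+1}) ) \;\le\; \|u^{k+1}-u\|\,\|e_k(u^{k+1})\| \;\le\; \frac{\mu}{2}\frac{\sigma}{1-\sigma}\|u^{k+1}-u\|^2 + \frac{1}{2\mu}\frac{1-\sigma}{\sigma}\|e_k(u^{k+1})\|^2.
\end{equation*}
Summing this over $k=1,\dots,K$ reduces the lemma to bounding $\sum_{k=1}^K \frac{1-\sigma}{\sigma}\|e_k(u^{k+1})\|^2$ in terms of $\|e_0(u^1)\|$ and $\{\sqrt{\beta}\|v^{k}-v^{k-1}\|_H\}_k$.

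To carry out this residual estimate, I will iterate the recursion (\ref{Basic_equation_3}), which I rewrite as $a_k \le \sigma(a_{k-1}+b_k)$ with $a_k := \|e_k(u^{k+1})\|$ and $b_k := \sqrt{\beta}\|v^k-v^{k-1}\|_H$. A short induction gives the closed-form bound
\begin{equation*}
a_k \;\le\; \sigma^k(a_0+b_1) + \sigma\sum_{j=2}^{k}\sigma^{k-j}b_j \;=\; \sigma\sum_{j=1}^{k}\sigma^{k-j}d_j,\qquad d_1:=a_0+b_1,\ d_j:=b_j\ (j\ge 2).
\end{equation*}
Applying Cauchy--Schwarz in the weighted form $\bigl(\sum_{j=1}^k \sigma^{k-j}d_j\bigr)^2 \le \bigl(\sum_{j=1}^k \sigma^{k-j}\bigr)\bigl(\sum_{j=1}^k \sigma^{k-j}d_j^2\bigr)$ together with $\sum_{j=1}^{k}\sigma^{k-j}\le \frac{1}{1-\sigma}$ yields $a_k^2 \le \frac{\sigma^2}{1-\sigma}\sum_{j=1}^k \sigma^{k-j} d_j^2$. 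Summing in $k$ and swapping the order of summation gives the clean estimate
\begin{equation*}
\sum_{k=1}^{K} a_k^2 \;\le\; \frac{\sigma^2}{(1-\sigma)^2}\Bigl[(a_0+b_1)^2 + \sum_{j=2}^{K} b_j^2\Bigr].
\end{equation*}

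Multiplying by $\frac{1-\sigma}{\sigma}$ turns the prefactor into exactly $\frac{\sigma}{1-\sigma}$, and re-identifying $a_0+b_1 = \|e_0(u^1)\|+\sqrt{\beta}\|v^0-v^1\|_H$ and $\sum_{j=2}^K b_j^2 = \sum_{i=1}^{K-1}\beta\|v^i-v^{i+1}\|_H^2$ gives the two residual terms on the right-hand side of (\ref{Key_3}). Combining with the Young's inequality split from the first step completes the proof.

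I expect the main obstacle to be the bookkeeping required to get the \emph{combined} initial term $(\|e_0(u^1)\|+\sqrt{\beta}\|v^0-v^1\|_H)^2$, rather than two separate squared terms: splitting naively with $(x+y)^2\le 2x^2+2y^2$ (or iterating $a_k^2 \le \sigma a_{k-1}^2 + \frac{\sigma^2}{1-\sigma}b_k^2$) loses the cross term and produces an initial coefficient of $1$ on $\|e_0(u^1)\|^2$ rather than $\frac{\sigma}{1-\sigma}$, which is fatal when $\sigma<1/2$. The remedy, as above, is to merge $a_0$ with $b_1$ into a single ``pseudo-source'' $d_1$ \emph{before} applying Cauchy--Schwarz, so that the geometric-series sum is collected uniformly across all indices.
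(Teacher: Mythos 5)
Your proposal is correct and arrives at exactly the constants in (\ref{Key_3}), but it reorganizes the estimate in a way that differs from the paper's proof. The paper first bounds $( u^{k+1}-u, e_k(u^{k+1}))\le \|u^{k+1}-u\|\,\|e_k(u^{k+1})\|$, expands $\|e_k(u^{k+1})\|$ via the iterated recursion (your closed form is precisely the paper's (\ref{sum_eu})), merges the $i=0$ term with $\sigma^k\|e_0(u^1)\|$ exactly as you do, and only then applies Young's inequality \emph{term by term} to each summand $\sigma^{k-i}\sqrt{\beta}\,\|u^{k+1}-u\|\,\|v^i-v^{i+1}\|_H$, splitting the geometric weight $\sigma^{k-i}$ evenly between the two squares before summing the geometric series. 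You instead apply Young's inequality \emph{once} at the top level with the asymmetric weight $\tfrac{\sigma}{1-\sigma}$, which forces you to control the new intermediate quantity $\sum_{k}\|e_k(u^{k+1})\|^2$; you do this with a weighted Cauchy--Schwarz (discrete Young's convolution) inequality on $\bigl(\sum_j\sigma^{k-j}d_j\bigr)^2$ followed by the same swap of summation order. Both routes use the same three ingredients (recursion iteration, the $a_0$--$b_1$ merge, geometric-series bounds) and give identical right-hand sides, so neither buys sharper constants; your version isolates a reusable $\ell^2$ bound on the residual sequence, while the paper's avoids squaring the convolution and is slightly more direct. Your closing observation about why the merge must happen before the quadratic step is exactly the subtlety the paper handles in the third line of its displayed chain, so the proposal stands as a valid alternative proof.
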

\begin{proof}
	First, it follows from (\ref{Basic_equation_3}) that
	\begin{equation}\label{sum_eu}
	\| e_{k}(u^{k+1})\|\le \sum_{i=0}^{k-1} \sigma^{k-i} \sqrt{\beta}\|v^{i}-v^{i+1}\|_{H}+\sigma^k\| e_{0}(u^{1})\|.
	\end{equation}
From (\ref{optimality_u}) and (\ref{sum_eu}), for any $\mu>0$ satisfying (\ref{Theory_Contractive_6}) and $u \in U$, we have
\begin{eqnarray*}
	&& \quad\sum_{k=1}^K ( u^{k+1} - u , D_u {L}_{\beta}( u^{k+1},z^k,\lambda^k )) \le \sum_{k=1}^K \| u^{k+1}-u\| \|  e_{k}(u^{k+1}) \| \\
	&&\le  \sum_{k=1}^K \sum_{i=0}^{k-1}  \sigma^{k-i} \sqrt{\beta} \| u^{k+1}-u\| \|v^{i}-v^{i+1}\|_{H}+\sum_{k=1}^K   \sigma^k\| u^{k+1}-u\| \| e_{0}(u^{1})\|\\
	&&\le  \sum_{k=1}^K \sum_{i=1}^{k-1}  \sigma^{k-i} \sqrt{\beta} \| u^{k+1}-u\| \|v^{i}-v^{i+1}\|_{H}+\sum_{k=1}^K   \sigma^k \| u^{k+1}-u\| \left[\| e_{0}(u^{1})\|+\sqrt{\beta}\|v^0-v^1\|_{H} \right]\\
	&&\le  \frac{\mu}{2}\sum_{k=1}^K \sum_{i=1}^{k-1}  \sigma^{k-i}  \| u^{k+1}-u\|^2+  \frac{1}{2\mu} \sum_{k=1}^K \sum_{i=1}^{k-1}  \sigma^{k-i} \beta \|v^{i}-v^{i+1}\|_{H}^2\\
	&&\quad+ \frac{\mu}{2}\sum_{k=1}^K   \sigma^k \| u^{k+1}-u\|^2+ \frac{1}{2\mu} \sum_{k=1}^K   \sigma^k  \left[\| e_{0}(u^{1})\|+\sqrt{\beta} \|v^0-v^1\|_{H} \right]^2\\
	&&=  \frac{\mu}{2}\sum_{k=1}^K \sum_{i=0}^{k-1}  \sigma^{k-i}  \| u^{k+1}-u\|^2+  \frac{1}{2\mu} \sum_{k=1}^K \sum_{i=1}^{k-1}  \sigma^{k-i} \beta \|v^{i}-v^{i+1}\|_{H}^2\\
	&&\quad+  \frac{1}{2\mu} \sum_{k=1}^K   \sigma^k  \left[\| e_{0}(u^{1})\|+\sqrt{\beta} \|v^0-v^1\|_{H} \right]^2.
\end{eqnarray*}
	Then, we have
	{
		\begin{equation*}
		\begin{aligned}
		&\sum_{k=1}^K \left( u^{k+1} - u , D_{u} {L}_{\beta}( u^{k+1},z^k,\lambda^k )\right) \\
		\le~& \frac{\mu}{2}\sum_{k=1}^K\frac{\sigma-\sigma^{k+1}}{1-\sigma}\| u^{k+1}-u\|^2+ \frac{1}{2\mu}\sum_{i=1}^{K-1}\frac{\sigma-\sigma^{K-i+1}}{1-\sigma} \beta \|v^{i}-v^{i+1}\|_{H}^2\\
		&+\frac{1}{2\mu} \frac{\sigma-\sigma^{K+1}}{1-\sigma} \left[\| e_{0}(u^{1})\|+\sqrt{\beta} \|v^0-v^1\|_{H} \right]^2\\
		\le~&  \frac{\mu}{2}\sum_{k=1}^K\frac{\sigma}{1-\sigma}\| u^{k+1}-u\|^2+ \frac{1}{2\mu}\sum_{i=1}^{K-1}\frac{\sigma}{1-\sigma} \beta \|v^{i}-v^{i+1}\|_{H}^2\\
		&+\frac{1}{2\mu} \frac{\sigma}{1-\sigma} \left[\| e_{0}(u^{1})\|+\sqrt{\beta} \|v^0-v^1\|_{H} \right]^2, \quad \forall u \in U.
		\end{aligned}
		\end{equation*}
	}
	We thus complete the proof.
\end{proof}
Now we can  establish the strong global convergence of Algorithm \ref{InADMM_algorithm}.
\begin{theorem}\label{thm-convergence}
	Let $w^*=(u^*,z^*,\lambda^*)^\top$ be the solution point of the variational inequality (\ref{Algo_General_VI}) and $\left\{ w^k \right\}=\{(u^k, z^k,\lambda^k)^\top\}$ be the sequence generated by Algorithm \ref{InADMM_algorithm}. Then, we have the following assertions:
	\begin{enumerate}
		\item[(1)] $\|e_{k} (u^{k+1}) \|\overset{k\rightarrow \infty}{\longrightarrow} 0,\quad \| z^{k} - z^{k+1}
		\| \overset{k\rightarrow \infty}{\longrightarrow} 0$,\quad $\|u^{k+1}-z^{k+1}\| \overset{k\rightarrow \infty}{\longrightarrow} 0$;
		\item[(2)] $u^{k}\overset{k\rightarrow \infty}{\longrightarrow}u^*$, $z^{k}\overset{k\rightarrow \infty}{\longrightarrow}z^*$ and $\lambda^{k}\overset{k\rightarrow \infty}{\longrightarrow}\lambda^*$ strongly in $U$.
	\end{enumerate}
\end{theorem}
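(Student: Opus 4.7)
The plan is to apply Lemma \ref{P1} at $w=w^*$ to obtain a one-step inequality controlling $\|u^{k+1}-u^*\|^2$ together with a descent in the $\|\cdot\|_H$ norm, sum in $k$, and absorb the residual cross terms via Lemma \ref{P2}; strong convexity of $\tilde{J}$ in (\ref{str_con_of_J}) and the twin positivity conditions on $\mu$ in (\ref{Theory_Contractive_6}) are what eventually make the resulting sums finite. Concretely, I combine (\ref{Theory_Contractive_1}) with the variational inequality characterization (\ref{Algo_General_VI}) at $w^*$; a direct expansion of $F$ in (\ref{notation}) shows that
\begin{equation*}
(\bar{w}^k-w^*,F(\bar{w}^k)-F(w^*))=(u^{k+1}-u^*,D\tilde{J}(u^{k+1})-D\tilde{J}(u^*)),
\end{equation*}
whose lower bound $\|u^{k+1}-u^*\|^2$ is furnished by (\ref{str_con_of_J}). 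The outcome is a single-index inequality
\begin{equation*}
\|u^{k+1}-u^*\|^2 \le \tfrac{1}{2}\bigl(\|v^k-v^*\|_H^2-\|v^{k+1}-v^*\|_H^2-\|v^{k+1}-v^k\|_H^2\bigr)+(u^{k+1}-u^*,D_u L_\beta(u^{k+1},z^k,\lambda^k)).
\end{equation*}

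Next, I sum from $k=1$ to $K$ (so the $H$-norm difference telescopes) and invoke Lemma \ref{P2} at $u=u^*$ to control the cross term by $\|u^{k+1}-u^*\|^2$ and $\|v^{i+1}-v^i\|_H^2$. After rearrangement I obtain
\begin{equation*}
\Bigl(1-\tfrac{\mu\sigma}{2(1-\sigma)}\Bigr)\sum_{k=1}^K\|u^{k+1}-u^*\|^2+\tfrac{1}{2}\sum_{k=1}^K\|v^{k+1}-v^k\|_H^2-\tfrac{\sigma\beta}{2\mu(1-\sigma)}\sum_{i=1}^{K-1}\|v^{i+1}-v^i\|_H^2 \le C_0,
\end{equation*}
where $C_0$ depends only on $\|v^1-v^*\|_H$, $\|v^1-v^0\|_H$ and $\|e_0(u^1)\|$. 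Condition (\ref{Theory_Contractive_6}) gives $1-\tfrac{\mu\sigma}{2(1-\sigma)}>0$ and $\tfrac{1}{2}-\tfrac{\sigma\beta}{2\mu(1-\sigma)}>0$, so the left-hand side dominates a strictly positive multiple of $\sum\|u^{k+1}-u^*\|^2+\sum\|v^{k+1}-v^k\|_H^2$ (up to the unmatched $K$-th term, which is nonnegative). Consequently both series converge, which immediately yields $u^{k+1}\to u^*$ strongly in $U$, $\|z^{k+1}-z^k\|\to 0$ and $\|\lambda^{k+1}-\lambda^k\|\to 0$.

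From here assertion (1) is quick: the multiplier update (\ref{ADMM_lambda}) gives $\|u^{k+1}-z^{k+1}\|=\|\lambda^{k+1}-\lambda^k\|/\beta\to 0$, and iterating the contraction (\ref{Basic_equation_3}) (or directly using (\ref{sum_eu})) together with $\sigma<1$ and $\|v^{k}-v^{k-1}\|_H\to 0$ forces $\|e_k(u^{k+1})\|\to 0$ as a decaying convolution. For (2), $z^k\to z^*$ follows from $\|u^{k+1}-z^{k+1}\|\to 0$, $u^{k+1}\to u^*$ and the primal feasibility $u^*=z^*$ implied by (\ref{Algo_General_VI}). Strong convergence of the multiplier is extracted by passing to the limit in the inexact optimality relation (\ref{oc}): continuity of $S$ on $U$ and of the adjoint solve gives $p^{k+1}|_\mathcal{O}\to p^*|_\mathcal{O}$, so that the identity $(1+\beta)u^{k+1}+p^{k+1}|_\mathcal{O}-\beta z^k-\lambda^k=e_k(u^{k+1})$ yields $\lambda^k\to u^*+p^*|_\mathcal{O}=\lambda^*$.

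The main technical hurdle I anticipate is the algebraic balancing in the second paragraph. The cross term $(u^{k+1}-u^*,D_u L_\beta(u^{k+1},z^k,\lambda^k))$ inherits inexactness through (\ref{Basic_equation_3})--(\ref{sum_eu}), and only the simultaneous positivity of both coefficients in (\ref{Theory_Contractive_6}) -- a delicate interplay between $\sigma$, $\beta$ and the auxiliary $\mu$ -- permits $\sum\|u^{k+1}-u^*\|^2$ and $\sum\|v^{k+1}-v^k\|_H^2$ to be controlled simultaneously. A secondary subtlety is that strong convergence of the dual variable cannot be deduced from $\|\lambda^{k+1}-\lambda^k\|\to 0$ alone; it must be read off from the residual form of the $u$-optimality condition, which is precisely where the continuity of the state/adjoint solve enters the argument.
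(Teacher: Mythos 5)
Your proposal is correct and follows essentially the same route as the paper's proof: Lemma \ref{P1} evaluated at $w=w^*$ combined with the strong-convexity lower bound $(\bar{w}^k-w^*,F(\bar{w}^k)-F(w^*))\ge\|u^{k+1}-u^*\|^2$, summation with Lemma \ref{P2} to reach the inequality (\ref{Theory_Contractive_7}), the positivity conditions (\ref{Theory_Contractive_6}) to conclude summability, the tail-splitting of (\ref{sum_eu}) for $\|e_k(u^{k+1})\|\to 0$, and the residual form of the $u$-optimality condition together with the Lipschitz continuity of $D\tilde{J}$ (equivalently, continuity of the state/adjoint solve) for the strong convergence of $\lambda^k$. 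No gaps.
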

\begin{proof}
	(1). First, it follows from (\ref{notation}), (\ref{str_con_of_J}) and (\ref{wbar}) that
	\begin{equation}\label{Basic_equation_8}
	( w - \bar{w}^k, F\left(w\right) - F (\bar{w}^k)) =( u - \bar{u}^k,D\tilde{J}(u)-D\tilde{J}(\bar{u}^k))\ge \|  u - u^{k+1} \|^2.
	\end{equation}
Then, using the results (\ref{Theory_Contractive_1}) and (\ref{Key_3}) established in Lemma \ref{P1} and Lemma \ref{P2}, respectively, we obtain
	{\begin{eqnarray}
		&&\sum^{K}_{k=1} \left\{ I_{\mathcal{C}} ( \bar{z}^k) -I_{\mathcal{C}}(z) +( \bar{w}^k - w,  F(w)) \right\}\nonumber\\
		&=& \sum^{K}_{k=1} \left\{ I_{\mathcal{C}} ( \bar{z}^k) - I_{\mathcal{C}}\left( z \right) +( \bar{w}^k - w,  F(\bar{w}^k))+ ( \bar{w}^k-w, F\left(w\right) - F (\bar{w}^k)) \right\}\nonumber\\
		&\overset{(\ref{Theory_Contractive_1})}{\le}& \frac{1}{2} \left( \| v^{1} - v \|_{H}^2 - \| v^{K + 1} - v \|_{H}^2 \right) + \sum^{K}_{k=1} \left\{( u^{k+1} - u, D_u {L}_{\beta}( u^{k+1},z^k,\lambda^k)) \right. \nonumber\\
		&&\left. - ( w - \bar{w}^k, F\left(w\right) - F (\bar{w}^k))\right\} - \sum_{k=1}^{K} \frac{1}{2}\| v^k - v^{k+1} \|_{H}^2\nonumber\\
		&\overset{(\ref{Key_3})(\ref{Basic_equation_8})}{\le}& \frac{1}{2} \left( \| v^{1} - v \|_{H}^2 - \| v^{K+1} - v \|_{H}^2 \right) + \sum^{K}_{k=1} \left( \frac{\mu}{2} \frac{\sigma}{1 - \sigma} - 1 \right) \| u^{k+1}-u \|^2  \nonumber\\
		&& + \sum^{K-1}_{k=1} \frac{1}{2} \left( \frac{\sigma}{1 - \sigma} \frac{\beta}{\mu} - 1 \right) \| v^{k} - v^{k+1} \|_{H}^2 - \frac{1}{2} \| v^{K} - v^{K+1} \|_{H}^2 \nonumber\\
		&& + \frac{1}{2\mu} \frac{\sigma}{1 - \sigma} \left( \| e_{0} (u^{1}) \| + \sqrt{\beta} \| v^{0} - v^{1} \|_{H} \right)^2,\; \forall w\in W.\label{Theory_Contractive_5}
		\end{eqnarray}}

For the solution point $w^*$, we have
	$$
	I_{\mathcal{C}}( \bar{z}^k ) - I_{\mathcal{C}}\left(z^*\right) + ( \bar{w}^k - w^*, F \left(w^*\right)) \ge 0,\;\forall k\ge 1.
	$$
	Setting $w = w^*$ in (\ref{Theory_Contractive_5}), together with the above property, for any integer $K > 1$, we have
	{{\begin{equation}\label{Theory_Contractive_7}
			\begin{aligned}
			&\sum^{K}_{k=1} \left( 1 - \frac{\mu}{2} \frac{\sigma}{1 - \sigma} \right) \| u^{k+1} - u^* \|^2 + \sum^{K-1}_{k=1} \left( \frac{1}{2} - \frac{\beta}{2\mu} \frac{\sigma}{1 - \sigma} \right) \| v^{k} - v^{k+1} \|_{H}^2\\
			\le~& \frac{1}{2} \| v^{1} - v^* \|_{H}^2 + \frac{1}{2\mu} \frac{\sigma}{1 - \sigma} \left( \| e_{0} (u^{1}) \| + \sqrt{\beta} \| v^{0} - v^{1} \|_{H} \right)^2 \\
			&- \frac{1}{2} \| v^{K+1} -v^* \|_{H}^2 - \frac{1}{2} \| v^{K} -v^{K+1} \|_{H}^2.
			\end{aligned}
			\end{equation}}}
	It follows from (\ref{Theory_Contractive_6}) that
	$$
	(1-\frac{\mu}{2}\frac{\sigma}{1-\sigma})>0 \quad\text{and}\quad (1-\frac{1}{\mu}\frac{\sigma}{1-\sigma}\beta)>0.
	$$
	Then, the inequality (\ref{Theory_Contractive_7}) implies
	\begin{equation}\label{converge1}
	\|u^{k+1}-u^* \|\overset{k\rightarrow\infty}{\longrightarrow}0
	~ \text{and}~\| v^{k+1}-v^k \|_{H}\overset{k\rightarrow\infty}{\longrightarrow}0,
	\end{equation}
	For any $\varepsilon>0$, there exists $k_0$, such that for all $k\geq k_0$, we have
	$\|v^{k+1}-v^k\|_{H}<\varepsilon$ and $\sigma^k<\varepsilon.$
	Then, for all $k\geq k_0$, it follows from (\ref{sum_eu}) that
	\begin{equation*}
	\begin{aligned}
	\quad\| e_{k}(u^{k+1})\|
	&\le \sum_{i=0}^{k-1} \sigma^{k-i} \sqrt{\beta} \|v^{i}-v^{i+1}\|_{H}+\sigma^k\| e_{0}(u^{1})\|\\
	&= \sum_{i=0}^{k_0-1} \sigma^{k-i} \sqrt{\beta} \|v^{i}-v^{i+1}\|_{H}+\sum_{i=k_0}^{k-1} \sigma^{k-i} \sqrt{\beta} \|v^{i}-v^{i+1}\|_{H}+\sigma^k\| e_{0}(u^{1})\|\\
	&\le \big(\sqrt{\beta}\max_{0\le i\le k_0-1}\|v^{i}-v^{i+1}\|_{H}\sum_{i=0}^{k_0-1} \sigma^{k-k_0-i} \big)\sigma^{k_0}+\sigma^k\| e_{0}(u^{1})\|\\
	&\quad+\big(\sqrt{\beta}\max_{k_0\le i\le k-1}\|v^{i}-v^{i+1}\|_{H}\sum_{i=k_0}^{k-1} \sigma^{k-i} \big)\\
	&\le \varepsilon\big[\sqrt{\beta}\max_{0\le i\le k_0-1}\|v^{i}-v^{i+1}\|_{H}\sum_{i=0}^{k_0-1} \sigma^{k-k_0-i} +\sqrt{\beta}\sum_{i=k_0}^{k-1} \sigma^{k-i}+\| e_{0}(u^{1})\|\big],
	\end{aligned}
	\end{equation*}
	which implies that
	\begin{equation*}
	\| e_{k}(u^{k+1})\|\overset{k\rightarrow +\infty}{\longrightarrow}0.
	\end{equation*}
	In addition, since $\| v^{k+1}-v^k \|_{H}\overset{k\rightarrow\infty}{\longrightarrow}0$, we conclude that
	\begin{equation*}
	\| z^{k+1}-z^k \|\overset{k\rightarrow\infty}{\longrightarrow}0\quad\hbox{and}\quad\| \lambda^{k+1}-\lambda^k \|\overset{k\rightarrow\infty}{\longrightarrow}0.
	\end{equation*}
	Then, from $\| u^{k+1}-z^{k+1} \|=\frac{1}{\beta}\| \lambda^{k+1}-\lambda^k \|$, we have
	$\| u^{k+1}-z^{k+1} \|\overset{k\rightarrow\infty}{\longrightarrow}0.$

	\noindent(2). From (\ref{converge1}), we know that $u^{k}\overset{k\rightarrow \infty}{\longrightarrow}u^*$ strongly in $U$. Combining with $\| u^{k+1}-z^{k+1} \|\overset{k\rightarrow\infty}{\longrightarrow}0$, one has $z^{k}\overset{k\rightarrow \infty}{\longrightarrow}z^*$ strongly in $U$.  From (\ref{Algo_General_VI}), it is easy to verify that $\lambda^*=D\tilde{J}(u^*)$.
	On the other hand, one has:
	$$
	\lambda^{k}= D\tilde{J}(u^{k+1})+\beta(u^{k+1}-z^k)-e_k(u^{k+1}).
	$$
	We thus have
	$$
	\lambda^{k}-\lambda^*=D\tilde{J}(u^{k+1})-D\tilde{J}(u^*)+\beta(u^{k+1}-u^k)+\beta(u^{k}-z^k)-e_k(u^{k+1}).
	$$
	Noting that $u^{k}\overset{k\rightarrow \infty}{\longrightarrow}u^*$, $u^{k}-z^k\overset{k\rightarrow \infty}{\longrightarrow}0$, $e_k(u^{k+1})\overset{k\rightarrow \infty}{\longrightarrow}0$ and $D\tilde{J}$ is Lipschitz continuous (see (\ref{property_of_J})), we have
	$$
	\lambda^{k}\overset{k\rightarrow \infty}{\longrightarrow}\lambda^*~\text{strongly in } U.
	$$		
	We thus complete the proof.
\end{proof}
\begin{remark}
	Clearly, it follows from Theorem \ref{thm-convergence} that the state variable $y^k=S(u^k)$ also converges strongly in $Y$ to $y^*=S(u^*)$ since $S$ is continuous.
\end{remark}

\begin{remark}
Note that the convergence analysis for Algorithm \ref{InADMM_algorithm} does not depend on how the inexactness criterion (\ref{inexact_criterion}) is satisfied and what the specific form of the solution operator $S$ is.
\end{remark}

\section{Convergence Rate}\label{sec:convergence_rate}
In \cite{he20121, he2015non}, the ADMM's $O(1/K)$ worst-case convergence rate in both the ergodic and non-ergodic senses have been initiated in the context of convex optimization with consideration of the Euclidean space, where $K$ denotes the iteration counter. Recall that an $O(1/K)$ worst-case convergence rate means that an iterate, whose accuracy to the solution under certain criterion is of the order $O(1/K)$, can be found after $K$ iterations of an iterative scheme. It can be alternatively explained as that it requires at most ${O}({1}/{\varepsilon})$ iterations to find an approximate
solution with an accuracy of $\varepsilon$. This type of convergence rate is in the worst-case nature, and it provides a worst-case but universal estimate on the speed of convergence. Hence, it does not contradict with some much faster speeds which might be witnessed empirically for a specific application (as to be shown in Section \ref{sec:numerical}).
In this section, we extend these results to Algorithm \ref{InADMM_algorithm} in an infinite-dimensional Hilbert space. Despite the more complicated settings, their proofs are similar to those in \cite{he20121, he2015non} and hence omitted.

\subsection{Ergodic Convergence Rate}
In this subsection, we follow \cite{he20121} to establish an $O(1/K)$ worst-case convergence rate in the ergodic sense for Algorithm \ref{InADMM_algorithm}.
We first introduce a criterion to measure the accuracy of an approximation of the variational inequality (\ref{Algo_General_VI}).
As analyzed in \cite{facchinei2007finite, he20121}, the solution set $W^*$ of the variational inequality (\ref{Algo_General_VI}) has the following characterization.
\begin{theorem}[cf. \cite{facchinei2007finite}]
	Let $W^*$ be the solution set of the variational inequality (\ref{Algo_General_VI}). Then, we have
	\begin{equation*}
	W^* = \bigcap_{w\in W}\ \left\{\hat{w}\in W:  I_{\mathcal{C}}(z) - I_{\mathcal{C}}( \hat{z})+ ( w - \hat{w}, F({w})) \ge 0 \right\}.
	\end{equation*}
\end{theorem}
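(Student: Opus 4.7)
The plan is to prove the two inclusions separately, relying on the monotonicity of the mapping $F$ defined in (\ref{notation}) and on the convexity of $I_{\mathcal{C}}$. First I would verify that $F$ is monotone: a direct expansion using the block structure in (\ref{notation}) gives
\begin{equation*}
(w-\hat{w},\, F(w)-F(\hat{w})) = (u-\hat{u},\, D\tilde{J}(u)-D\tilde{J}(\hat{u})),
\end{equation*}
because the cross terms involving $\lambda-\hat{\lambda}$ and $(u-\hat{u})-(z-\hat{z})$ cancel in pairs; this quantity is nonnegative by the strong convexity property (\ref{str_con_of_J}) of $\tilde{J}$. Monotonicity of $F$ is the key algebraic ingredient driving both inclusions.

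For the inclusion $W^*\subseteq\bigcap_{w\in W}\{\hat{w}\in W:I_{\mathcal{C}}(z)-I_{\mathcal{C}}(\hat{z})+(w-\hat{w},F(w))\ge 0\}$, I would take $\hat{w}\in W^*$ and combine the original VI characterization (\ref{Algo_General_VI}) with monotonicity: for any $w\in W$,
\begin{equation*}
(w-\hat{w},\,F(w)) \ge (w-\hat{w},\,F(\hat{w})) \ge I_{\mathcal{C}}(\hat{z})-I_{\mathcal{C}}(z),
\end{equation*}
which is exactly the desired inequality after rearrangement. This direction is essentially a one-liner once monotonicity is in hand.

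The reverse inclusion is the more delicate half and constitutes the main obstacle. Given $\hat{w}\in W$ satisfying the right-hand-side inequality for every $w\in W$, I must upgrade it to (\ref{Algo_General_VI}), i.e., replace $F(w)$ by $F(\hat{w})$. The standard Minty-type maneuver is to fix any test point $\tilde{w}\in W$ with $I_{\mathcal{C}}(\tilde{z})<+\infty$ (otherwise the target inequality is trivial), set $w_\alpha:=\hat{w}+\alpha(\tilde{w}-\hat{w})$ for $\alpha\in(0,1)$ (which lies in $W$ since $W=U\times U\times U$ is a linear space), and use convexity of $I_{\mathcal{C}}$ to bound $I_{\mathcal{C}}(z_\alpha)\le(1-\alpha)I_{\mathcal{C}}(\hat{z})+\alpha I_{\mathcal{C}}(\tilde{z})$. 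Substituting $w=w_\alpha$ into the assumed inequality, dividing by $\alpha$, and letting $\alpha\to 0^+$ yields
\begin{equation*}
I_{\mathcal{C}}(\tilde{z})-I_{\mathcal{C}}(\hat{z})+(\tilde{w}-\hat{w},\,F(\hat{w}))\ge 0.
\end{equation*}
The delicate step is the passage to the limit, which requires the hemicontinuity of $F$ along the segment from $\hat{w}$ to $\tilde{w}$; this follows from the Lipschitz continuity of $D\tilde{J}$ recorded in (\ref{property_of_J}) together with the linearity of the remaining components of $F$. Since $\tilde{w}\in W$ was arbitrary, $\hat{w}$ satisfies (\ref{Algo_General_VI}) and so $\hat{w}\in W^*$, finishing the proof.
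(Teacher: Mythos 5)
Your proof is correct and is precisely the standard monotone-operator (Minty-type) argument that underlies the cited result: the paper itself offers no proof of this theorem, deferring entirely to Facchinei and Pang, and your two inclusions (monotonicity of $F$ via (\ref{str_con_of_J}) for the forward direction, the convex-combination limit for the reverse) constitute exactly the argument one finds there. Two cosmetic points only: hemicontinuity of $F$ along segments follows most directly from $D\tilde{J}$ being affine with bounded linear part $I+\gamma\bar{S}^*\bar{S}$ (the displayed inequality (\ref{property_of_J}) is a one-sided bound rather than Lipschitz continuity per se), and the hypothesis forces $I_{\mathcal{C}}(\hat{z})$ to be finite (test with any $w$ whose $z$-component lies in $\mathcal{C}$), which you implicitly use when subtracting it in the limit step.
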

The above result indicates that $\hat{w}\in W$ is an approximate solution of the variational inequality (\ref{Algo_General_VI}) with an accuracy of $\varepsilon>0$ if
\begin{equation}\label{ap_soln}
I_{\mathcal{C}}\left(\hat{z}\right) - I_{\mathcal{C}}\left(z\right) +( \hat{w} - w , F \left(w\right) )\le \varepsilon.
\end{equation}
Next, we show an $O(1/K)$ worst-case convergence rate for Algorithm \ref{InADMM_algorithm}.

\begin{theorem}\label{thm-ergodic}
	Let $\left\{ w^k \right\}=\{(u^k,z^k,\lambda^k)^\top\}$ be the sequence generated by Algorithm \ref{InADMM_algorithm}; and $\{\bar{w}^k\}=\{(\bar{u}^k,\bar{z}^k,\bar{\lambda}^k)^\top\}$ be defined as in (\ref{wbar}). For any integer $K \geq 1$, we further define
	\begin{equation}\label{what}
	\hat{w}_K = \frac{1}{K}\sum_{k=1}^{K}\bar{w}^k.
	\end{equation}
	Then, for all $w\in W$, one has
	\begin{eqnarray*}
		I_{\mathcal{C}}\left(\hat{z}_K\right) - I_{\mathcal{C}}\left(z\right) +( \hat{w}_K - w , F \left(w\right) )
		\le  \frac{1}{K} \left[ \frac{1}{2\mu} \frac{\sigma}{1 - \sigma} \left( \| e_{0} (u^{1}) \| + \sqrt{\beta} \| v^{0} - v^{1} \|_{H} \right)^2 + \frac{1}{2} \|v^0 - v \|_{H}^2\right].
	\end{eqnarray*}
\end{theorem}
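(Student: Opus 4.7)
My plan is to adapt the ergodic rate analysis of \cite{he20121} to the present inexact, infinite-dimensional setting. The three ingredients are the per-iteration inequality in Lemma~\ref{P1}, the summed residual bound in Lemma~\ref{P2}, and the monotonicity relation (\ref{Basic_equation_8}) coming from the (strong) convexity of $\tilde{J}$.

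First, I would rewrite (\ref{Theory_Contractive_1}) so that the operator $F$ is evaluated at the test point $w$ instead of at $\bar{w}^k$. The monotonicity (\ref{Basic_equation_8}) gives $(\bar{w}^k - w, F(\bar{w}^k)) \ge (\bar{w}^k - w, F(w)) + \|u^{k+1} - u\|^2$, so substituting into (\ref{Theory_Contractive_1}) and rearranging yields, for each $k$ and each $w \in W$,
\begin{align*}
& I_{\mathcal{C}}(\bar{z}^k) - I_{\mathcal{C}}(z) + (\bar{w}^k - w, F(w)) \\
&\quad \le \tfrac{1}{2}\bigl(\|v^k - v\|_H^2 - \|v^{k+1} - v\|_H^2 - \|v^k - v^{k+1}\|_H^2\bigr) - \|u^{k+1} - u\|^2 + \bigl(u^{k+1} - u, D_u L_\beta(u^{k+1}, z^k, \lambda^k)\bigr).
\end{align*}

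Next, I would sum this inequality over $K$ consecutive indices (starting from $k = 0$, so that the telescope produces exactly $\tfrac{1}{2}\|v^0 - v\|_H^2$ and matches the averaging range implicit in (\ref{what})). Applying Lemma~\ref{P2} to the accumulated crossing term produces the constant $\tfrac{1}{2\mu}\tfrac{\sigma}{1-\sigma}(\|e_0(u^1)\| + \sqrt{\beta}\|v^0 - v^1\|_H)^2$, together with additional contributions $\tfrac{\mu}{2}\tfrac{\sigma}{1-\sigma}\|u^{k+1}-u\|^2$ and $\tfrac{\beta}{2\mu}\tfrac{\sigma}{1-\sigma}\|v^i - v^{i+1}\|_H^2$ on the right-hand side. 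By the choice of $\mu$ in (\ref{Theory_Contractive_6}), the net coefficients of $\|u^{k+1}-u\|^2$ and $\|v^k - v^{k+1}\|_H^2$ are both nonpositive, so these quadratic terms, together with the nonpositive terminal $-\tfrac{1}{2}\|v^K - v\|_H^2$ from the telescope, can all be discarded.

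Finally, to pass from the summed inequality to a statement about the ergodic average $\hat{w}_K$, I would use two elementary facts. Because $F$ is affine in $w$, the map $w' \mapsto (w' - w, F(w))$ is linear in $w'$, so $\tfrac{1}{K}\sum_k (\bar{w}^k - w, F(w)) = (\hat{w}_K - w, F(w))$. Because $I_{\mathcal{C}}$ is convex, Jensen's inequality gives $I_{\mathcal{C}}(\hat{z}_K) \le \tfrac{1}{K}\sum_k I_{\mathcal{C}}(\bar{z}^k)$. Dividing the summed inequality by $K$ and combining these two facts yields exactly the stated $O(1/K)$ bound.

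The main obstacle is not analytical but rather careful bookkeeping: the accumulated $\|u^{k+1}-u\|^2$ and $\|v^k - v^{k+1}\|_H^2$ contributions generated by Lemma~\ref{P2} must cancel against the strong-convexity gain $\|u^{k+1} - u\|^2$ and the term $-\tfrac{1}{2}\|v^k - v^{k+1}\|_H^2$ already present on the right-hand side of Lemma~\ref{P1}, and the threshold (\ref{Theory_Contractive_6}) on $\sigma$ and $\mu$ is precisely what makes this cancellation work. This is the same absorption argument that drives the proof of Theorem~\ref{thm-convergence} (see (\ref{Theory_Contractive_7})), so that calculation can be reused here almost verbatim.
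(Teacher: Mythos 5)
Your overall strategy is exactly the intended one: the paper omits the proof of Theorem \ref{thm-ergodic} precisely because the summed inequality you describe is already assembled as (\ref{Theory_Contractive_5}) in the proof of Theorem \ref{thm-convergence}; dropping the nonpositive quadratic terms via (\ref{Theory_Contractive_6}), dividing by $K$, and invoking the affinity of $w'\mapsto (w'-w,F(w))$ together with Jensen's inequality for the convex functional $I_{\mathcal{C}}$ finishes the argument. All of those ingredients appear correctly in your proposal, including the use of (\ref{Basic_equation_8}) to replace $F(\bar w^k)$ by $F(w)$.

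The one step that does not go through as written is your plan to start the telescope at $k=0$ so as to produce $\tfrac12\|v^0-v\|_H^2$. Lemma \ref{P1} cannot be applied with $k=0$: its proof uses (\ref{convex_g}), which is obtained by adding (\ref{b1}) and (\ref{b2}), and (\ref{b2}) is the optimality condition of the $z$-update at iteration $k-1$; for $k=0$ there is no such condition on the arbitrary initial point $z^0$ (equivalently, $(z^1-z^0,\lambda^1-\lambda^0)\le 0$ need not hold). Moreover, summing from $k=0$ would average $\bar w^0,\dots,\bar w^{K-1}$, which is not the $\hat w_K$ of (\ref{what}), and Lemma \ref{P2} is stated for the sum over $k=1,\dots,K$. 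The correct execution sums over $k=1,\dots,K$ and yields the bound with $\tfrac12\|v^1-v\|_H^2$ in place of $\tfrac12\|v^0-v\|_H^2$ (consistent with the appearance of $\|v^1-v^*\|_H$ in Theorem \ref{thm-nonergodic}); since neither of $\|v^0-v\|_H$ and $\|v^1-v\|_H$ dominates the other in general, you should either accept this form of the constant or regard the $v^0$ in the statement as a misprint. None of this affects the claimed $O(1/K)$ rate.
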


This theorem shows that after $K$ iterations, we can find an approximate solution of the variational inequality (\ref{Algo_General_VI}) with an accuracy of $O(1/K)$.
This approximate solution is given in (\ref{what}), and it is the average of all the points $w^k$ which can be computed by all the known iterates generated by Algorithm \ref{InADMM_algorithm}. Hence, this is an $O(1/K)$ worst-case convergence rate in the ergodic sense for Algorithm \ref{InADMM_algorithm}.

\subsection{Non-ergodic Convergence Rate}

In this subsection, we extend the result in \cite{he2015non} to show an $O(1/K)$ worst-case convergence rate in the non-ergodic sense for Algorithm \ref{InADMM_algorithm}.

We first need to clarify a criterion to precisely measure the accuracy of an iterate to a solution point. It follows from (\ref{notation}) and (\ref{optimality}) that for the iterate $(u^{k+1},z^{k+1},\lambda^{k+1})^\top$ generated by Algorithm \ref{InADMM_algorithm}, for all $w \in W$, one has
{$$
	I_{\mathcal{C}}(z) - I_{\mathcal{C}}(z^{k+1})  + \left( w - w^{k+1}, F(w^{k+1})) +  (  w - w^{k+1},  \left( \begin{array}{c}
	-\beta  \left( z^k - z^{k+1}\right) - e_{k} (u^{k+1})  \\
	0\\
	\frac{1}{\beta}\left( \lambda^{k+1} - \lambda^k \right)
	\end{array} \right)\right) \ge 0.
	$$}
\hspace{-0.15cm}Taking (\ref{Algo_General_VI}) into account, we can show that $(u^{k+1},z^{k+1},\lambda^{k+1})^\top$ is a solution point of (\ref{Algo_General_VI}) if and only if $ \| v^{k} - v^{k+1} \|_{H}^2 = 0 $ and $\| e_{k} (u^{k+1})  \|^2 = 0$. Hence, it is reasonable to measure the accuracy of the iterate $(u^{k+1},z^{k+1},\lambda^{k+1})^\top$ by $ \| v^{k} - v^{k+1} \|_{H}^2 $ and $\| e_{k} (u^{k+1})  \|^2 $. Our purpose is thus to show that after $K$ iterations of Algorithm \ref{InADMM_algorithm}, both $ \| v^{k} - v^{k+1} \|_{H}^2 $ and $\| e_{k} (u^{k+1})  \|^2 $ can be bounded by upper bounds in order of $O(1/K)$.

\begin{theorem}\label{thm-nonergodic}
	Let $\left\{ w^k \right\}=\{(u^k,z^k,\lambda^k)^\top\}$ be the sequence generated by Algorithm \ref{InADMM_algorithm}. Then, for any integer $K \geq 1$, we have
	{\small\begin{align}\label{itercomplexity-nonergodic_1}
		\setlength{\abovedisplayskip}{0pt}%
		\setlength{\belowdisplayskip}{0pt}%
		\min_{1\le k\le K} \left\{ \| v^{k} - v^{k+1} \|_{H}^2 \right\} \le \frac{1}{K} \left[ \frac{1}{\mu_0} \| v^{1} - v^* \|_{H}^2 + \frac{1}{\mu_0\mu} \frac{\sigma}{1 - \sigma} \left( \| e_{0} (u^{1}) \| +\sqrt{\beta} \| v^{0} - v^{1} \|_{H} \right)^2 \right],
		\end{align}}
	and
	{\small{\begin{eqnarray}
			\min_{1\le k \le K} \left\{ \| e_{k} (u^{k+1}) \|^2 \right\}\nonumber
			&\le& \frac{2}{K} \left\{ \left( \frac{\sigma}{1 - \sigma} \sqrt{\beta}\right)^2  \left[ \frac{1}{\mu_0} \| v^{1} - v^* \|_{H}^2 + \frac{1}{\mu_0\mu} \frac{\sigma}{1 - \sigma} \left( \| e_{0} (u^{1}) \| + \sqrt{\beta} \| v^{0} - v^{1} \|_{H} \right)^2 \right]  \right\} \nonumber\\
			& &\quad + \frac{2}{K^2}\left[ \left( \frac{\sigma}{1 - \sigma} \right)^2\cdot\left(\| e_{0} (u^{1}) \| +\sqrt{\beta}\| v^{0} - v^{1} \|_{H} \right)^2\right],\label{itercomplexity-nonergodic_2}
			\end{eqnarray}}}
	where $w^*$ is the solution point, $\mu$ satisfies (\ref{Theory_Contractive_6}) and $\mu_0 = 1 - \frac{\beta}{\mu} \frac{\sigma}{1 - \sigma} > 0$.
\end{theorem}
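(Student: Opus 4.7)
The plan is to handle the two estimates separately, leaning on the contractive inequality (\ref{Theory_Contractive_7}) already assembled inside the proof of Theorem \ref{thm-convergence} and the telescoped residual identity (\ref{sum_eu}). Both arguments ultimately reduce to the pigeonhole observation $\min_{1\le k\le K} a_k \le \frac{1}{K}\sum_{k=1}^K a_k$ for nonnegative $a_k$.

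For (\ref{itercomplexity-nonergodic_1}), I would set $w=w^*$ in (\ref{Theory_Contractive_7}). The coefficient $\left(1-\frac{\mu\sigma}{2(1-\sigma)}\right)$ multiplying $\sum_{k=1}^K\|u^{k+1}-u^*\|^2$ on the left is positive by (\ref{Theory_Contractive_6}), so that entire sum can be discarded; the term $-\frac{1}{2}\|v^{K+1}-v^*\|_H^2$ on the right is also dropped. Using $\mu_0\le 1$, the two remaining variation contributions on the left, namely $\frac{\mu_0}{2}\sum_{k=1}^{K-1}\|v^k-v^{k+1}\|_H^2$ and $\frac{1}{2}\|v^K-v^{K+1}\|_H^2$, together dominate $\frac{\mu_0}{2}\sum_{k=1}^K\|v^k-v^{k+1}\|_H^2$. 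Rearranging and pigeonholing then yields (\ref{itercomplexity-nonergodic_1}).

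For (\ref{itercomplexity-nonergodic_2}), I would start from (\ref{sum_eu}), peel off the $i=0$ summand and absorb it into $\sigma^k\|e_0(u^1)\|$, producing
\begin{equation*}
\|e_k(u^{k+1})\|\le \sigma^k\Lambda_0 + \sqrt{\beta}\sum_{i=1}^{k-1}\sigma^{k-i}\|v^i-v^{i+1}\|_H,\qquad \Lambda_0 := \|e_0(u^1)\|+\sqrt{\beta}\|v^0-v^1\|_H.
\end{equation*}
Squaring via $(a+b)^2\le 2a^2+2b^2$ and applying weighted Cauchy-Schwarz to the sum (using $\sum_{i=1}^{k-1}\sigma^{k-i}\le\sigma/(1-\sigma)$) yields
\begin{equation*}
\|e_k(u^{k+1})\|^2 \le 2\sigma^{2k}\Lambda_0^2 + \frac{2\beta\sigma}{1-\sigma}\sum_{i=1}^{k-1}\sigma^{k-i}\|v^i-v^{i+1}\|_H^2.
\end{equation*}
For the exponentially decaying first piece, the elementary estimate $\sigma^{2k}\le(\sigma/(1-\sigma))^2/k^2$, valid for all $\sigma\in(0,1)$ and $k\ge 1$ (verified by checking that $k\sigma^{k-1}(1-\sigma)\le (1-1/k)^{k-1}\le 1$ via maximization in $\sigma$), delivers the $1/K^2$ rate when evaluated at $k=K$. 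For the accumulated-variation piece, averaging $\min_k\le\frac{1}{K}\sum_k$ and swapping the double summation produces $\frac{1}{K}\cdot\frac{2\beta\sigma^2}{(1-\sigma)^2}\sum_{i=1}^{K-1}\|v^i-v^{i+1}\|_H^2$, which, together with the bound $\sum_{i=1}^{K-1}\|v^i-v^{i+1}\|_H^2 \le A$ established for (\ref{itercomplexity-nonergodic_1}), gives the $1/K$ contribution appearing in (\ref{itercomplexity-nonergodic_2}).

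The main obstacle is reconciling the two different decay rates inside a single $\min_k$ bound: since $\min_k(a_k+b_k)$ does not split as $\min_k a_k + \min_k b_k$, one has to argue termwise, embedding the sharp $\sigma^{2k}\le(\sigma/(1-\sigma))^2/k^2$ estimate into the pointwise inequality for $\|e_k(u^{k+1})\|^2$ before averaging, so that the exponentially decaying and cumulative-variation contributions each achieve the advertised rate without interfering with each other. Once this bookkeeping is arranged, the remainder of the argument parallels the Euclidean-space analyses in \cite{he20121, he2015non}, which is why the authors indicate that the proofs are omitted.
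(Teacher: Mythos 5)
Your argument for (\ref{itercomplexity-nonergodic_1}) is correct and is the natural route: set $w=w^*$ in (\ref{Theory_Contractive_7}), discard the nonnegative $u$-sum and the term $-\frac{1}{2}\|v^{K+1}-v^*\|_H^2$, use $\mu_0<1$ to absorb $\frac{1}{2}\|v^K-v^{K+1}\|_H^2$ into the sum, and pigeonhole. This gives $\sum_{k=1}^K\|v^k-v^{k+1}\|_H^2\le A$ with $A$ the bracketed quantity, exactly as claimed.

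For (\ref{itercomplexity-nonergodic_2}) there is a genuine gap, and you have in fact put your finger on it without closing it. Your route squares the pointwise bound $\|e_k(u^{k+1})\|\le \sigma^k\Lambda_0+\sqrt{\beta}\sum_{i=1}^{k-1}\sigma^{k-i}\|v^i-v^{i+1}\|_H$ first and then averages over $k$. Averaging the exponential piece, even after inserting $\sigma^{2k}\le(\sigma/(1-\sigma))^2/k^2$, yields $\frac{1}{K}\sum_{k=1}^K k^{-2}\le \frac{\pi^2}{6K}$, i.e.\ an $O(1/K)$ contribution, not the advertised $2/K^2$; and evaluating that piece at $k=K$ while averaging the other piece is exactly the illegitimate split $\min_k(a_k+b_k)\le a_K+\frac{1}{K}\sum_k b_k$ that you yourself flag as unavailable. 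Your "termwise" remedy therefore does not produce (\ref{itercomplexity-nonergodic_2}); at best it proves a weaker (though still $O(1/K)$) bound with $2/K$ in place of $2/K^2$ on the $\Lambda_0^2$ term. The fix is to reverse the order of the two operations: since $x\mapsto x^2$ is increasing on $[0,\infty)$,
\begin{equation*}
\min_{1\le k\le K}\|e_k(u^{k+1})\|^2=\Bigl(\min_{1\le k\le K}\|e_k(u^{k+1})\|\Bigr)^2\le\Bigl(\frac{1}{K}\sum_{k=1}^{K}\|e_k(u^{k+1})\|\Bigr)^2,
\end{equation*}
and summing the \emph{unsquared} bound gives $\sum_{k=1}^{K}\|e_k(u^{k+1})\|\le \frac{\sigma}{1-\sigma}\Lambda_0+\sqrt{\beta}\,\frac{\sigma}{1-\sigma}\sum_{i=1}^{K-1}\|v^i-v^{i+1}\|_H$. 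Now apply $(a+b)^2\le 2a^2+2b^2$ and the Cauchy--Schwarz inequality $\bigl(\sum_{i=1}^{K-1}\|v^i-v^{i+1}\|_H\bigr)^2\le (K-1)\sum_{i=1}^{K-1}\|v^i-v^{i+1}\|_H^2\le KA$: the $\Lambda_0$-term picks up the factor $1/K^2$ from squaring the average, while the variation term loses one factor of $K$ through Cauchy--Schwarz and lands at $\frac{2\beta}{K}(\frac{\sigma}{1-\sigma})^2A$. This reproduces (\ref{itercomplexity-nonergodic_2}) with the exact constants. (The paper omits the proof, deferring to the arguments of \cite{he20121,he2015non}; the average-then-square device above is the step your square-then-average ordering cannot replicate.)
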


We note that both values in the right-hand sides of (\ref{itercomplexity-nonergodic_1}) and (\ref{itercomplexity-nonergodic_2}) are order of ${O}({1}/{K})$. Therefore, this theorem provides an ${O}({1}/{K})$ worst-case convergence rate in the non-ergodic sense for Algorithm \ref{InADMM_algorithm}.



\section{Implementation of Algorithm \ref{InADMM_algorithm}} \label{sec:implementation}

In this section, we discuss how to execute the inexactness criterion (\ref{inexact_criterion}) so as to specify Algorithm \ref{InADMM_algorithm} as a concrete algorithm for the problem (\ref{Basic_Problem})--(\ref{state_eqn}), and delineate the implementation details.

Indeed, the $u$-subproblem (\ref{ADMM_u}) is a typical unconstrained parabolic optimal control problem and various numerical methods in the literature can be applied. Whichever such method is applied, we should and only need to ensure that the inexactness criterion (\ref{inexact_criterion}) is satisfied in order to guarantee the overall convergence of Algorithm \ref{InADMM_algorithm}. Below we illustrate by the CG method how to execute the inexactness criterion (\ref{inexact_criterion}) in the inner-layer iterations. Recall that the $u$-subproblem (\ref{ADMM_u}) is
\begin{equation}\label{u_CG}
u^{k+1}=\arg\min_{u\in U}\;j_k(u)=\frac{\gamma}{2} \|S(u)-y_d\|^2+\frac{1}{2}\|u\|^2-(\lambda^k,u-z^{k})+\frac{\beta}{2}\|u-z^{k}\|^2,
\end{equation}
and the associated optimality condition is given in Theorem \ref{oc_u}. Next, we show that the optimality condition of the problem (\ref{u_CG}) can be characterized by a symmetric and positive definite linear system, hence the CG method can be applied. To this end, we first recall that the linear operator $\bar{S}$ defined in (\ref{def_barS}) satisfies
\begin{equation*}
S(v)=\bar{S}v+S(0),\quad\forall v\in U.
\end{equation*}
Then, $y=\bar{S}u$ is equivalent to the following equation:
$$
\frac{\partial y}{\partial t}-\nu \Delta y+a_0y=u\chi_{\mathcal{O}}~ \text{in}~ \Omega\times(0,T), \quad
y=0~\text{on}~ \Gamma\times(0,T),\quad
y(0)=0.
$$
In addition, it is easy to show that the adjoint operator $\bar{S}^*: Y\longrightarrow U$ satisfies $\bar{S}^*y=p|_{\mathcal{O}}$, where $p$ solves
$$
-\frac{\partial p}{\partial t}
-\nu\Delta p +a_0p=y~ \text{in}~ \Omega\times(0,T), \quad
p=0~
\text{on}~ \Gamma\times(0,T),\quad
p(T)=0.
$$
Hence, the $u$-subproblem (\ref{u_CG}) can be reformulated as
\begin{equation*}
u^{k+1}=\arg\min_{u\in U}\;j_k(u)=\frac{\gamma}{2} \|\bar{S}u+S(0)-y_d\|^2+\frac{1}{2}\|u\|^2-(\lambda^k,u-z^{k})+\frac{\beta}{2}\|u-z^{k}\|^2,
\end{equation*}
and the corresponding optimality condition is
\begin{equation}\label{spd_u}
(1+\beta+\gamma\bar{S}^*\bar{S})u^{k+1}+\gamma\bar{S}^*(S(0)-y_d)-\lambda^k-\beta z^k=0.
\end{equation}
Note that (\ref{spd_u}) is a symmetric and positive definite linear system of $u^{k+1}$ and the CG method can be applied. Obviously, at each iteration of Algorithm 1, we need to solve a linear system discretized from (\ref{spd_u}), with the same coefficient matrix, but different right-hand sides. Hence, a uniform preconditioner can be applied when certain iterative method (e.g., CG method) is employed to solve these linear systems. Recall that if SSN methods are applied, the coefficient matrices of the resulting Newton systems vary iteratively and preconditioners should also be chosen iteratively. This is a major difference of the ADMM from SSN methods for the problem (\ref{Basic_Problem})--(\ref{state_eqn}) .

\begin{algorithm}[h]
	\caption{CG for the $u$-subproblem (\ref{ADMM_u})}
	\begin{algorithmic}[0]
		\State \textbf{Input} $u_0^k=u^k, p_0^k=p^k$. Compute $g_0^k=u_0^k+ p_0^k|_{\mathcal{O}}+\beta(u_0^k-z^{k})-\lambda^k$, set $w_0^k=g_0^k$ and $e_k(u^k)=g_0^k$.
		\While{$\|e_{k}(u_{m}^{k})\|>\sigma\|e_{k}(u^{k})\|$}
		\State  Solving
		$\bar{y}_m^k=\bar{S}w_m^k
		$ and $ \bar{p}_m^k|_{\mathcal{O}}=\bar{S}^*(\gamma\bar{y}_m^k)$. Then compute the step size:
		$$
		\rho_m^k=\frac{(g_m^k, w_m^k)}{(\bar{g}_m^k, w_m^k)},\quad\text{with}\quad \bar{g}_m^k=(1+\beta)w_m^k+{\bar{p}_m^k}|_{\mathcal{O}}.
		$$
		\State Update $u, p$, the gradient $g$ and the residual $e_k(u_{m+1}^k)$ via:
		\begin{eqnarray*}
			&u_{m+1}^k=u_m^k-\rho_m^k w_m^k,
			&p_{m+1}^k=p_m^k-\rho_m^k \bar{p}_m^k,\\
			&g_{m+1}^k=g_m^k-\rho_m^k\bar{g}_m^k,
			&e_{k}(u_{m+1}^{k})= g_{m+1}^k.
		\end{eqnarray*}
		\State Compute $r_m^k=\|g_{m+1}^k\|^2/\|g_{m}^k\|^2,$
		and then update
		$w_{m+1}^k=g_{m+1}^k+r_m^k w_m^k.$
		\EndWhile
		\State \textbf{Output} $u^{k+1}=u^k_{m+1}$ and $p^{k+1}=p^k_{m+1}$.
	\end{algorithmic}\label{AbstractCG}
\end{algorithm}
With the inexactness criterion (\ref{inexact_criterion}), the CG method for solving the $u$-subproblem (\ref{ADMM_u}) is presented in Algorithm \ref{AbstractCG}. Compared with the classical CG method (see e.g., Chapter 3 of \cite{glowinski2003finite} and
Chapter 2 of \cite{glowinski2015variational}), Algorithm \ref{AbstractCG} requires updating the adjoint variable $p$ to verify the specific inexactness criterion (\ref{inexact_criterion}). It is clear that the update of $p_{m+1}^k$ can be computed cheaply. Hence, our proposed inexactness criterion (\ref{inexact_criterion}) can be verified by negligible extra computation. More discussions, including the convergence properties of CG type methods applied to the solution of linear systems in Hilbert spaces, can also be found in the mentioned references.

Now, with these discussions, Algorithm \ref{InADMM_algorithm} can be specified as an ADMM--CG two-layer nested iterative scheme for the problem (\ref{Basic_Problem})--(\ref{state_eqn}). We list it as Algorithm \ref{ADMM_CG}.
\begin{algorithm}[h]
	\caption{{An ADMM--CG two-layer nested iterative scheme for the problem (\ref{Basic_Problem})--(\ref{state_eqn}).}}
	\begin{algorithmic}[0]
		\Require  $\{u^0,z^0,\lambda^0\}^\top$ in $U\times U\times U$, $\beta>0$ and $0<\sigma<\frac{\sqrt{2}}{\sqrt{2}+\sqrt{\beta}}\in(0,1).$
		\For {$k\geq 0$} $\{u^k,z^k,\lambda^k\}\rightarrow u^{k+1}\rightarrow z^{k+1}\rightarrow \lambda^{k+1}$ via
		\State Compute $u^{k+1}$ by the CG method in Algorithm \ref{AbstractCG};
		\State Compute $z^{k+1}$ by (\ref{soln_z});
		\State Update the Lagrange multiplier $\lambda^{k+1} = \lambda^k-\beta(u^{k+1}-z^{k+1}).$
		\EndFor
	\end{algorithmic}\label{ADMM_CG}
\end{algorithm}

\begin{remark}\label{re:CG_replaced}
As mentioned, to execute the inexactness criterion (\ref{inexact_criterion}), the CG method can be replaced by other numerical schemes such as the preconditioned MinRes method in \cite{pearson2012regularization} which has been verified to be efficient for unconstrained parabolic optimal control problems. Hence, depending on how to satisfy the inexactness criterion (\ref{inexact_criterion}) internally, Algorithm \ref{InADMM_algorithm} can be specified as various algorithms.
\end{remark}

\section{Numerical Results of Algorithm \ref{ADMM_CG} for (\ref{Basic_Problem})--(\ref{state_eqn})}\label{sec:numerical}

In this section, we report some preliminary numerical results to validate the efficiency of Algorithm \ref{ADMM_CG} for the parabolic optimal control problem (\ref{Basic_Problem})--(\ref{state_eqn}). All codes were written in MATLAB R2016b and numerical experiments were conducted on a Surface Pro 5 laptop with 64-bit Windows 10.0 operation system, Intel(R) Core(TM) i7-7660U CPU (2.50 GHz), and 16 GB RAM.

First, for numerical discretization, we employ the backward Euler finite difference method (with step size $\tau$) for the time discretization and piecewise linear finite element method (with mesh size $h$) for the space discretization. In order to implement (\ref{soln_z}), we perform at each time step a \emph{nodal projection} of the continuous piecewise affine function $\left(u_h^{k+1}-\frac{\lambda_h^k}{\beta}\right)(n\tau)(\in V_{0h})$ over the convex set $\mathcal{C}\cap V_{0h}$, where
$
\mathcal{C}=\{\phi|\phi\in L^2(\Omega),a\leq\phi\leq b\},
$
and (assuming that $\Omega$ is a bounded polygonal domain of $\mathbb{R}^2$)
$$
V_{0h}=\{\phi|\phi\in C^{0}(\bar{\Omega}),\phi|_\mathbb{T}\in P_1, \forall \mathbb{T}\in\mathcal{T}_h, \phi|_\Gamma=0\}.
$$
Here, $\mathcal{T}_h$ is a triangulation of $\Omega$ and $P_1$ is the space of the polynomial functions of two
variables of degree $\leq 1$. In addition, we denote by $P^{\text{nodal}}_{\mathcal{C}\cap V_{0h}}$ the above projection operator, which is defined by
\begin{flalign}\label{r7}
\left\{
\begin{aligned}
&P^{\text{nodal}}_{\mathcal{C}\cap V_{0h}}(\phi)\in \mathcal{C}\cap V_{0h}, \forall \phi\in V_{0h},\\
&P^{\text{nodal}}_{\mathcal{C}\cap V_{0h}}(\phi)(Q_k)=\max\{a,\min\{b,\phi(Q_k)\}\},\forall k=1,\cdots, N_{0h}.
\end{aligned}
\right.
\end{flalign}
In (\ref{r7}), $\{Q_k\}_{k=1}^{N_{0h}}$ is the set of the vertices of triangulation $\mathcal{T}_h$ not located on $\Gamma$. This nodal projection can facilitate the implementation of Algorithm \ref{ADMM_CG}; and we refer to Remark 5 in \cite{GSY2019} for more discussions.

For the linear systems arising at each time step of the discretized parabolic equations, they are solved by the permuted LDL factorization in, e.g., \cite{S96}, because the coefficient matrices are sparse and invariant. Other methodologies such as Krylov subspace methods, domain decomposition methods and multi-grid methods can also be applied to further improve the numerical efficiency. In addition, an adjoint approach is employed for the $u$-subproblem (\ref{ADMM_u}), which requires storing the solution of the state equation (\ref{state_eqn}) at each time step. This is a demanding request on memory, and it may not be applicable for, e.g., time-dependent problems in three-dimensional space, due to the huge scale of systems after discretization. To tackle this issue, some memory saving methodologies can be embedded into our algorithmic design. All these numerical techniques are important but beyond the scope of our discussion; we refer to \cite{saad2003iterative} for fast linear algebra solvers and \cite{berggren1996computational} for a memory saving strategy.

To test the efficiency of Algorithm 3, the primal residual $\pi_s$ and dual residual $d_s$ are respectively defined as
\begin{eqnarray*}
	\pi_s=\|{z}^{k}-{z}^{k-1}\|_{L^2(\mathcal{O})}/\|z^{k-1}\|_{L^2(\mathcal{O})},\quad &d_s=\|{u}^{k}-{z}^{k}\|_{L^2(\mathcal{O})}/\max\{\|u^{k-1}\|_{L^2(\mathcal{O})},\|z^{k-1}\|_{L^2(\mathcal{O})}\}.
\end{eqnarray*}
The stopping criterion for all numerical experiments is
$$
\max\{\pi_s, d_s\}\leq tol,
$$
where $tol>0$ is a prescribed tolerance. The initial values are set as $u=0,z=0$ and $\lambda=0$ in the following discussion. For the constant $\sigma$ in the inexactness criterion (\ref{inexact_criterion}), according to (\ref{sigma}), we choose $\sigma=0.99 \frac{\sqrt{2}}{\sqrt{2}+\sqrt{\beta}}$ because larger values of $\sigma$ mean that the criterion is looser and hence less computation is needed for solving the subproblems.
In addition, we define the relative distance $\hbox{``RelDis''}$ and the objective functional value $\hbox{``Obj''}$ as
\begin{equation*}
\text{RelDis}:={\|y-y_d\|^2_{L^2(Q)}}/{\|y_d\|^2_{L^2(Q)}} \quad\text{and}\quad \text{Obj}:=\frac{1}{2}\|y-y_d\|^2_{L^2(Q)}+\frac{\alpha}{2}\|u\|_{L^2(\mathcal{O})}^2,
\end{equation*}
to verify the accuracy of the numerical solution.

\medskip
\noindent\textbf{Example 1.} We consider an example of the problem (\ref{Basic_Problem})--(\ref{state_eqn}) with a known exact solution; it is a variant of the problem discussed in \cite{andrade2012multigrid}. The model is
\begin{equation}\label{ex1_Problem}
\begin{aligned}
\min_{u\in {\mathcal{C}}, y\in L^2(Q)} \quad  &\frac{1}{2}\iint_Q |y-y_d|^2dxdt+\frac{\alpha}{2}\iint_{Q}|u|^2dxdt\\
{\hbox{s.t.}}\qquad &
\left\{
\begin{aligned}
&\frac{\partial y}{\partial t}-\Delta y=f+u,&\quad \text{in}\quad \Omega\times(0,T), \\
&y=0,&\quad \text{on}\quad \Gamma\times(0,T),\\
& y(0)=\varphi,&
\end{aligned}
\right.
\end{aligned}
\end{equation}
with $\Omega=\{(x_1,x_2)\in \mathbb{R}^2|0<x_1<1, 0<x_2<1\}$, $\omega=\Omega$, $T=1$. In (\ref{ex1_Problem}), the function $f\in L^2(Q)$ is a source term that helps us construct the exact solution without affection to the numerical implementation. We further let
\begin{eqnarray*}
	y=(1-t)\sin\pi x_1\sin\pi x_2,~ p=\alpha (1-t)\sin 2\pi x_1\sin 2\pi x_2,~ u=\min(a,\max(b,-\frac{p}{\alpha})),
\end{eqnarray*}
and set
\begin{equation*}
f=-u+\frac{\partial y}{\partial t}-\Delta y, \quad y_d=y+\frac{\partial p}{\partial t}+\Delta p,\quad \varphi=\sin \pi x
_1\sin \pi x_2.
\end{equation*}
Then, it is easy to verify that $(u^*,y^*):=(u, y)$ is the optimal solution of the problem (\ref{ex1_Problem}). Moreover, the admissible set is
\begin{equation*}
{\mathcal{C}}=\{v|v\in L^\infty(\mathcal{O}), -0.5\leq v(x_1,x_2; t)\leq 0.5 ~\text{a.e. in} \, \mathcal{O}\}\subset{L^2(\mathcal{O})}.
\end{equation*}
We set the regularization parameter $\alpha=10^{-5}$ throughout.

We first test Algorithm \ref{ADMM_CG} with different values of $\beta$ to show how its performance depends on the choice of $\beta$. As discussed in Section \ref{sec:InexactADMM}, $\beta$ should be close to 1 to balance the minimization of $\tilde{J}(u)$ and the satisfaction of the control constraint $u\in \mathcal{C}$. On the other hand, it is clear that the system (\ref{spd_u}) becomes increasingly ill-conditioned as $\beta$ decreases; and a smaller $\beta$ tends to result in slower convergence for the CG method. As a result, the trade-off between the inexactness criterion (\ref{inexact_criterion}) and the conditioning of the $u$-subproblem (\ref{ADMM_u}) should also be considered for choosing $\beta$. The results with $\tau=h=2^{-6}$ and different values of $\beta$ are reported in Table \ref{Tab_beta}, in which the notation ``$\text{ADMM}_{Iter}$'' represents the total out-layer ADMM iteration numbers, ``Mean/Max CG'' denote the average and maximum steps of the inner CG method, respectively. Results in Table \ref{Tab_beta} empirically show that $\beta=2$ or $\beta=3$ is a good choice. In the following, we choose $\beta=3$.

\begin{table}[ht]
	\setlength{\abovecaptionskip}{0pt}
	\setlength{\belowcaptionskip}{5pt}
	\centering
	\caption{Numerical results of Algorithm \ref{ADMM_CG} with different $\beta$ for Example 1.}\label{Tab_beta}
	{\small\begin{tabular}{|c|c|c|c|c|c|c|c|}
			\hline
			$\beta$&0.1&0.5&1&2&3&4&5\\
			\hline
			$\text{ADMM}_{Iter}$&297&60& 29&20 &22&25&29\\
			\hline
			Mean/Max CG& 6.01/10 &7.80/10& 7.48/10& 6.75/9&6.00/8&5.36/7&4.97/7\\
			\hline
		\end{tabular}
	}
\end{table}

Next, we validate the efficiency of the inexactness criterion (\ref{inexact_criterion}). We compare Algorithm \ref{ADMM_CG} with the intuitive implementation of the ADMM (\ref{ADMM}) whose accuracy for solving the $u$-subproblem (\ref{ADMM_u}) by the CG method is empirically set as a constant a prior. For this set of numerical experiments, $tol=10^{-4}$ and various space mesh sizes $h$ and time step sizes $\tau$ as $h=\tau={2^{-i}}$ with $i=5, 6, 7, 8$, are considered. The accuracy for solving the $u$-subproblem (\ref{ADMM_u}) is $ \|e_k(u_{m+1}^k)\|\le 10^{-j}$ with $j$ an integer. We test various values for the accuracy constant: $j=2,4, 6, 8, 10$, which represent from low to very high levels of accuracy. Numerical results are reported in Table \ref{Table_ex1}, in which ``$\hbox{ADMM}_{1e-j}$'' denotes the accuracy constant for solving the $u$-subproblem (\ref{ADMM_u}) is $10^{-j}$. Here and in what follows, the notation ``$\sim$'' means that the ADMM does not converge within 500 iterations.
\begin{table}[ht]
	\setlength{\abovecaptionskip}{0pt}
	\setlength{\belowcaptionskip}{5pt}
	\centering
	\caption{Numerical comparison of Algorithm \ref{ADMM_CG} and $\hbox{ADMM}_{1e-k}$ for Example 1.}\label{Table_ex1}
	{\footnotesize\begin{tabular}{|c|c|c|c|c|c|c|}
			\hline
			{ Mesh}&{ Algorithm}&${\text{ADMM}_{Iter}}$&{Mean/Max CG}&{ Time (s)}&{ RelDis}&{ Obj}\\
			\hline
			&$\hbox{ADMM}_{1e-10}$ &21&61.71/83&17.49&$7.5987\times10^{-7}$&$3.6825\times10^{-7}$\\
			\cline{2-7}
			&$\hbox{ADMM}_{1e-8}$
			&21&44.81/65&16.94&$7.5987\times10^{-7}$&$3.6825\times10^{-7}$\\
			\cline{2-7}				
			$2^{-5}$&$\hbox{ADMM}_{1e-6}$&21&28.47/49&8.59&$7.5986\times10^{-7}$&$3.6825\times10^{-7}$\\
			\cline{2-7}	
			&$\hbox{ADMM}_{1e-4}$ &21&13.30/32&4.23&$7.5990\times10^{-7}$&$3.6825\times10^{-7}$\\
			\cline{2-7}
			&$\hbox{ADMM}_{1e-2}$ &$\sim$&$\sim$&$\sim$&$\sim$&$\sim$\\
			\cline{2-7}
			&{\bf{Algorithm 3}} &24&5.88/8& 1.93&$7.5954\times10^{-7}$&$3.6823\times10^{-7}$\\
			\hline
			
			&$\hbox{ADMM}_{1e-10}$ &19&60.20/94&196.68&$6.7055\times10^{-7}$&$3.5036\times10^{-7}$\\
			\cline{2-7}	   &$\hbox{ADMM}_{1e-8}$&19&45.05/71&170.48&$6.7055\times10^{-7}$&$3.5036\times10^{-7}$\\
			\cline{2-7}	$2^{-6}$&$\hbox{ADMM}_{1e-6}$&19&27.47/48&93.65&$6.7055\times10^{-7}$&$3.5036\times10^{-7}$\\
			\cline{2-7}
			&$\hbox{ADMM}_{1e-4}$ &19&12.84/31&46.79&$6.7056\times10^{-7}$&$3.5035\times10^{-7}$\\
			\cline{2-7}
			&$\hbox{ADMM}_{1e-2}$ &$\sim$&$\sim$&$\sim$&$\sim$&$\sim$\\
			\cline{2-7}
			&{\bf{Algorithm 3}} &22&6.00/8&20.86&$6.7075\times10^{-7}$&$3.5035\times10^{-7}$\\
			
			\hline
			
			&$\hbox{ADMM}_{1e-10}$ &19&59.10/93&3372.30&$6.5295\times10^{-7}$&$3.4473\times10^{-7}$\\
			\cline{2-7}
			&$\hbox{ADMM}_{1e-8}$ &19&44.25/70&2884.61&$6.5295\times10^{-7}$&$3.4473\times10^{-7}$\\
			\cline{2-7}
			$2^{-7}$&$\hbox{ADMM}_{1e-6}$&19&27.15/48&1653.10&$6.5295\times10^{-7}$&$3.4473\times10^{-7}$\\
			\cline{2-7}
			&$\hbox{ADMM}_{1e-4}$ &19&12.70/30&793.48&$6.5299\times10^{-7}$&$3.4473\times10^{-7}$\\
			\cline{2-7}
			&$\hbox{ADMM}_{1e-2}$&$\sim$&$\sim$&$\sim$&$\sim$&$\sim$\\
			\cline{2-7}
			&{\bf{Algorithm 3}} &20&6.20/8&307.06&$6.5294\times10^{-7}$&$3.4473\times10^{-7}$\\
			\hline
			
			&$\hbox{ADMM}_{1e-10}$ &19&58.30/93&37106.76&$6.4876\times10^{-7}$&$3.4260\times10^{-7}$\\
			\cline{2-7}
			&$\hbox{ADMM}_{1e-8}$ &19&43.45/70&26570.61&$6.4876\times10^{-7}$&$3.4260\times10^{-7}$\\
			\cline{2-7}			
			$2^{-8}$&$\hbox{ADMM}_{1e-6}$&19&26.95/48&15801.55&$6.4876\times10^{-7}$&$3.4260\times10^{-7}$\\
			\cline{2-7}
			&$\hbox{ADMM}_{1e-4}$ &19&12.55/30&7627.94&$6.4879\times10^{-7}$&$3.4260\times10^{-7}$\\
			\cline{2-7}
			&$\hbox{ADMM}_{1e-2}$ & $\sim$&$\sim$&$\sim$&$\sim$&$\sim$\\
			\cline{2-7}
			&{\bf{Algorithm 3}} &20&6.05/8&3839.67&$6.4877\times10^{-7}$&$3.4260\times10^{-7}$\\
			\hline
		\end{tabular}
	}
\end{table}

According to Table \ref{Table_ex1}, the automatically adjustable inexactness criterion (\ref{inexact_criterion}) is favorable for the implementation of ADMM (\ref{ADMM}). If the accuracy is set as a constant a prior, then it is not easy to probe an appropriate value. An either too large or too small value may result in troubles. For a too large value, e.g., $10^{-2}$, the accuracy for solving the subproblems may not be sufficient and the convergence may not be guaranteed. For a too small value, e.g., $10^{-8}$ or $10^{-10}$, the accuracy for solving the subproblems may be unnecessarily high and it does not help accelerate the overall convergence. Especially, if the mesh size for discretization is small, then the resulting $u$-subproblem is high dimensional and it becomes less practical to solve it to a high precision. For the cases tested, retrospectively, the accuracy $10^{-4}$ is a good choice. But there is neither theory nor hint to fathom this value a prior. Indeed, as to be shown in Example 2, this value could be heavily dependent on the specific problem under discussion. The inexactness criterion (\ref{inexact_criterion}), however, can find an appropriate accuracy automatically for finding an approximate solution of the $u$-subproblem (\ref{ADMM_u}). Hence, Algorithm \ref{ADMM_CG} does not have these mentioned difficulties, and it generally works well for all the tested cases.
{Table \ref{Table_ex1} also shows that the efficiency of Algorithm \ref{ADMM_CG} is independent from the mesh size used for discretization. This is an important feature to guarantee the numerical efficiency when an algorithm is applied to the discretized version of some model with fine mesh for discretization, as mentioned in some well-known works such as \cite{BKI99,HKV09,hinze2008optimization,KKV2011}.
	
\begin{table}[ht]
	\setlength{\abovecaptionskip}{0pt}
	\setlength{\belowcaptionskip}{5pt}
	\centering
	\caption{Numerical errors of Algorithm \ref{ADMM_CG} with $\beta=3$ and $tol=10^{-4}$ for Example 1.}\label{err_ex1}
	{\small\begin{tabular}{|c|c|c|c|c|c|}
			\hline
			error&$h=\tau=2^{-5}$&$h=\tau=2^{-6}$&$h=\tau=2^{-7}$&$h=\tau=2^{-8}$\\
			\hline
			$\|u-u^*\|_{L^2(\mathcal{O})}$&$1.8421\times 10^{-2}$ & $4.6767\times 10^{-3}$&$1.1715\times 10^{-3}$ &$2.9013\times 10^{-4}$\\
			\hline
			$\|y-y^*\|_{L^2(Q)}$& $ 3.6426\times 10^{-5}$ & $  8.6088\times 10^{-6}$& $ 2.1106\times 10^{-6}$&$4.9269\times 10^{-7}$\\
			\hline
		\end{tabular}
	}
\end{table}

Since the ADMM (\ref{ADMM}) is a first-order algorithm and generally it is not favorable to generate iterates in very high precisions, it is necessary to verify if the ADMM (\ref{ADMM}) can be accurate enough to guarantee the iterative accuracy. In other words, whether or not it is still the discretization error that constitutes the main part of the total error when the ADMM (\ref{ADMM}) is applied to the discretized version of the problem (\ref{ex1_Problem}). Recall that the solution of Example 1 is known. In Table \ref{err_ex1}, we report the $L^2$-error for the iterate ($u$, $y$) obtained by Algorithm \ref{ADMM_CG} for various values of $h$ and $\tau$. For succinctness, we only give the results for the case where $\beta=3$ and $tol= 10^{-4}$. It is clear from Table \ref{err_ex1} that, when the ADMM (\ref{ADMM}) is applied to the problem (\ref{ex1_Problem}), the iterative accuracy is sufficient and the overall error of $u$ and $y$ are both dominated by the discretization error.}

Evolutions of the residuals and objective functional values with respect to the outer ADMM iterations are displayed in Figure \ref{iterative_result_ex1}. These curves indicate the fast convergence of Algorithm \ref{ADMM_CG}. In addition, the state variable $y$ and the control variable $u$, and the errors $y^*-y$ and $u^*-u$ at $t=0.25$  with $h=\tau=2^{-6}$ are depicted in Figures \ref{numerical_result_ex1} and \ref{error_ex1}, respectively.
\begin{figure}[ht]
	\setlength{\abovecaptionskip}{0pt}
	\setlength{\belowcaptionskip}{5pt}
	\caption{Residuals (left) and objective functional values (right) with respect to outer ADMM iterations for Example 1.}\label{iterative_result_ex1}
	\centering
	\includegraphics[width=0.48\textwidth]{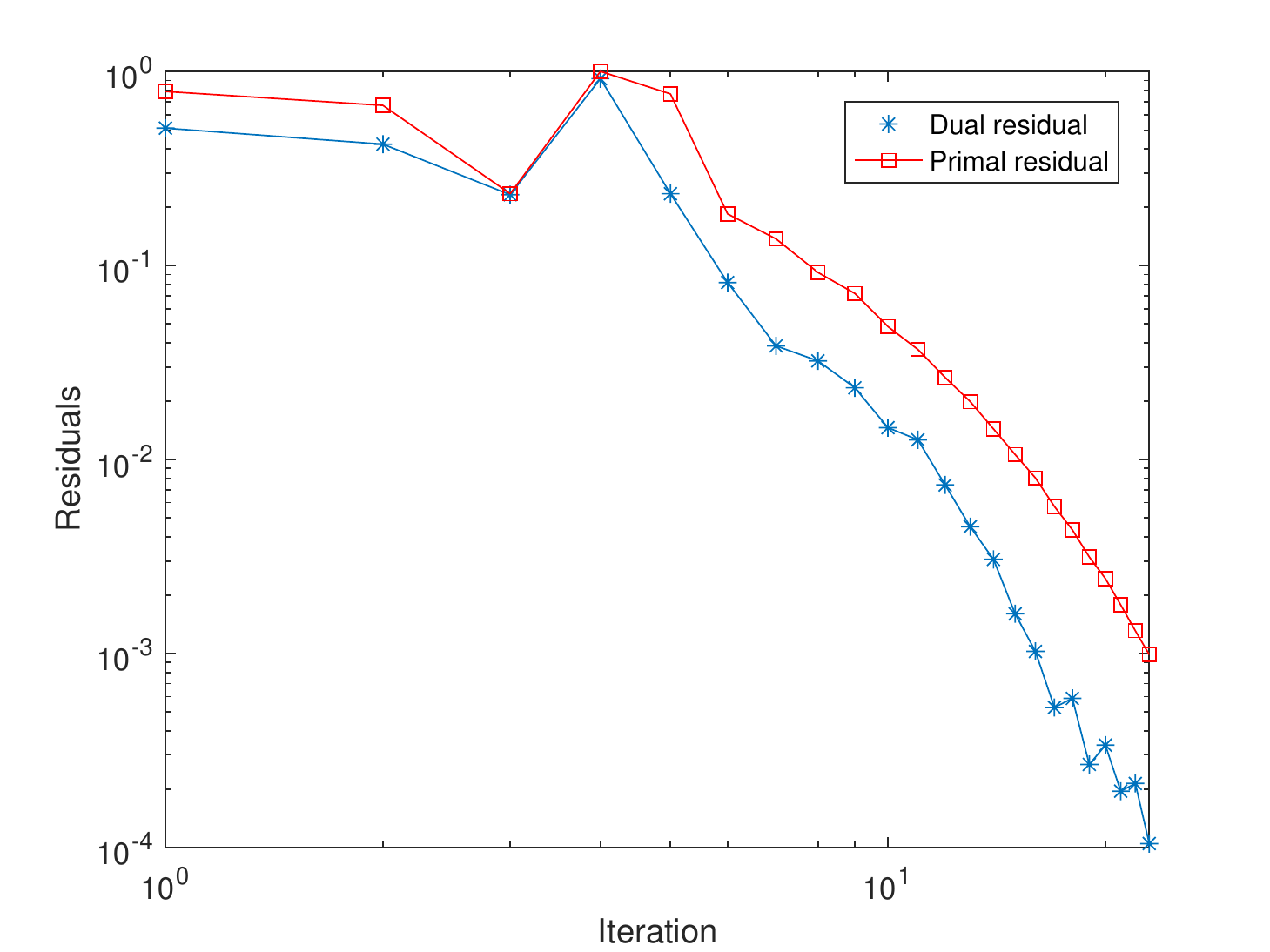}
	\includegraphics[width=0.48\textwidth]{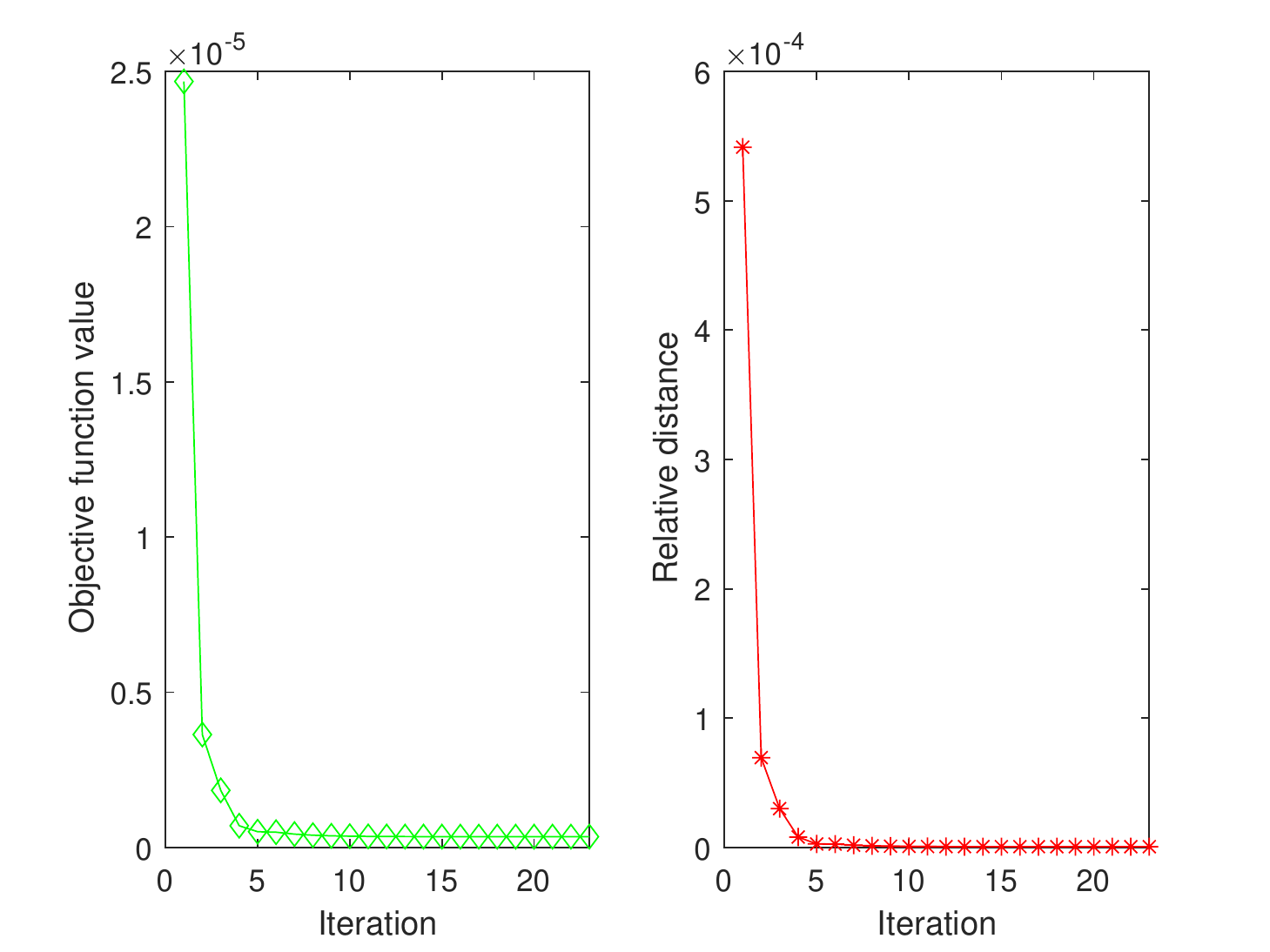}
\end{figure}
\begin{figure}[ht]
	\setlength{\abovecaptionskip}{0pt}
	\setlength{\belowcaptionskip}{5pt}
	\caption{Numerical solutions $y$ (left) and $u$ (right) at $t=0.25$ for Example 1.}\label{numerical_result_ex1}
	\centering
	\includegraphics[width=0.48\textwidth]{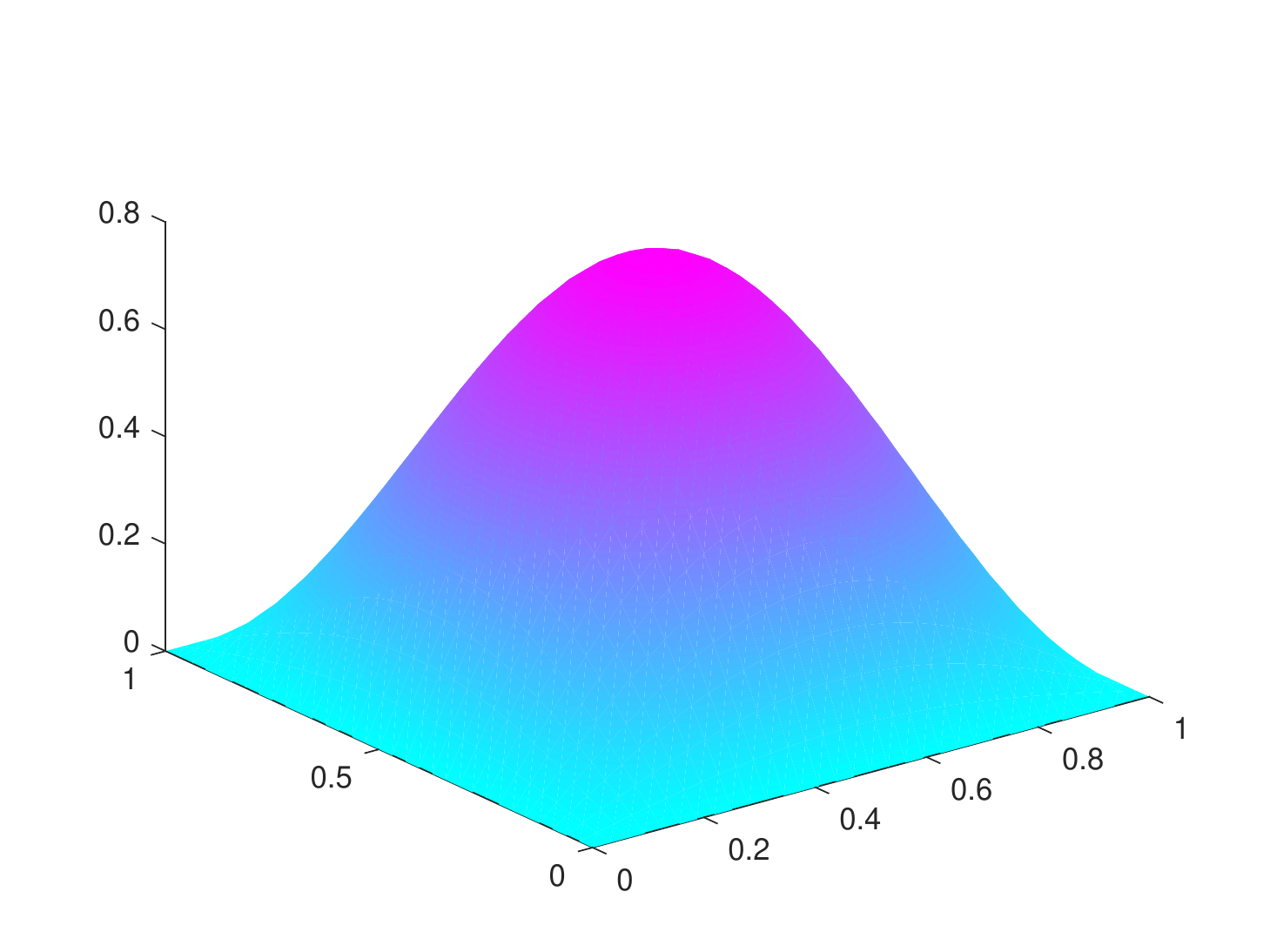}
	\includegraphics[width=0.48\textwidth]{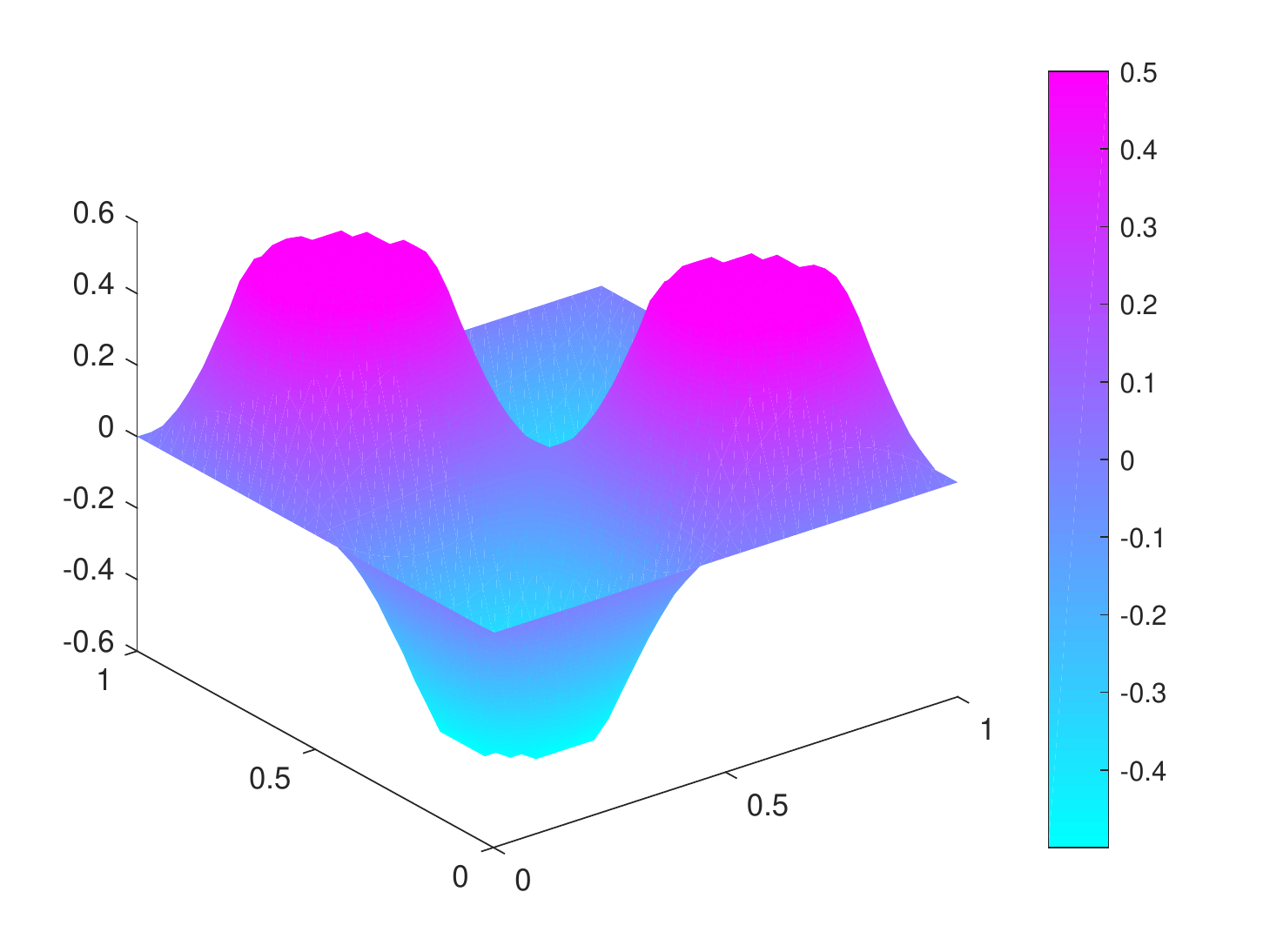}
\end{figure}
\begin{figure}[ht]
	\setlength{\abovecaptionskip}{0pt}
	\setlength{\belowcaptionskip}{5pt}
	\caption{Errors $y^*-y$ (left) and $u^*-u$ (right) at $t=0.25$ for Example 1.}\label{error_ex1}
	\centering
	\includegraphics[width=0.48\textwidth]{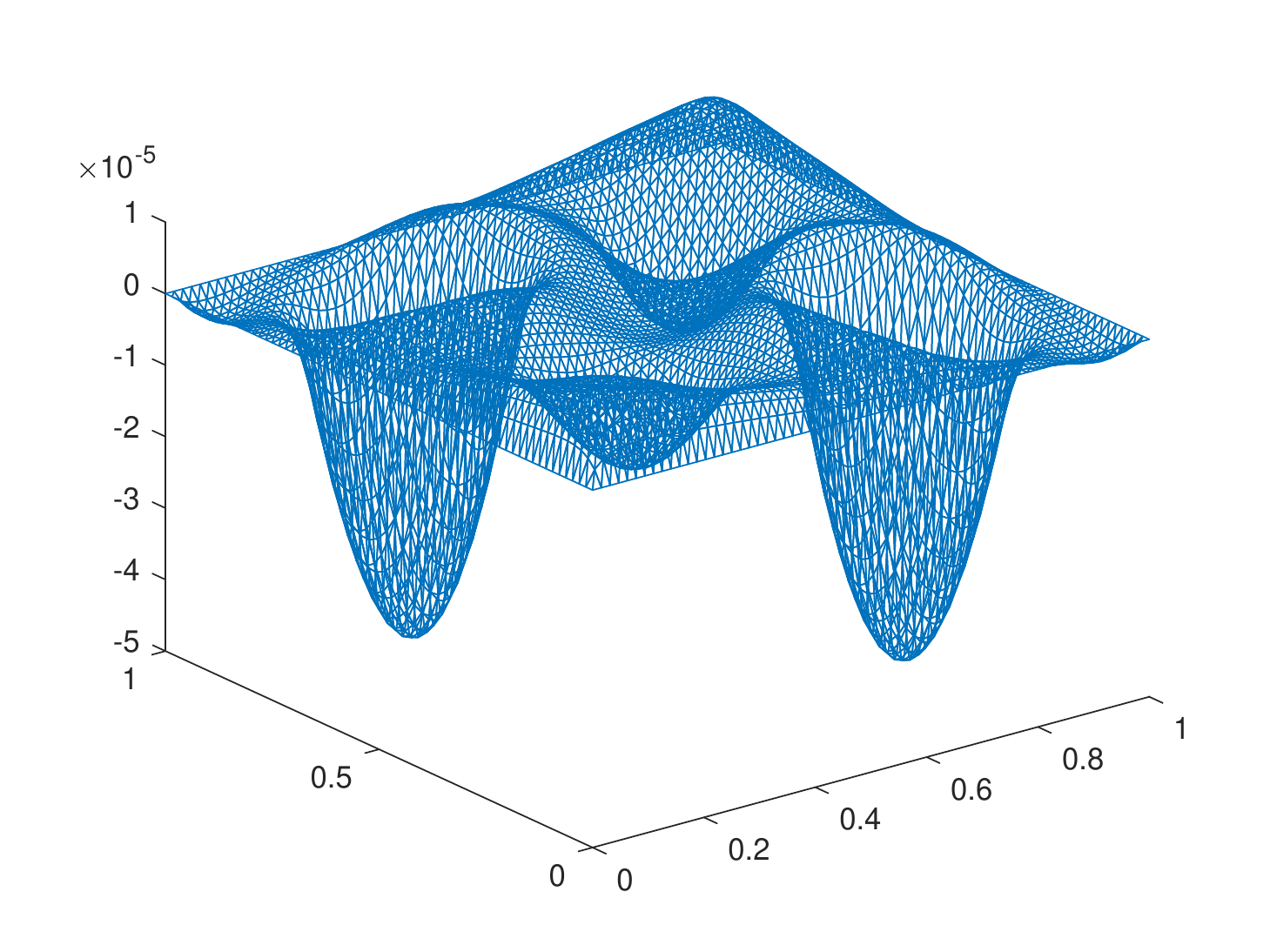}
	\includegraphics[width=0.48\textwidth]{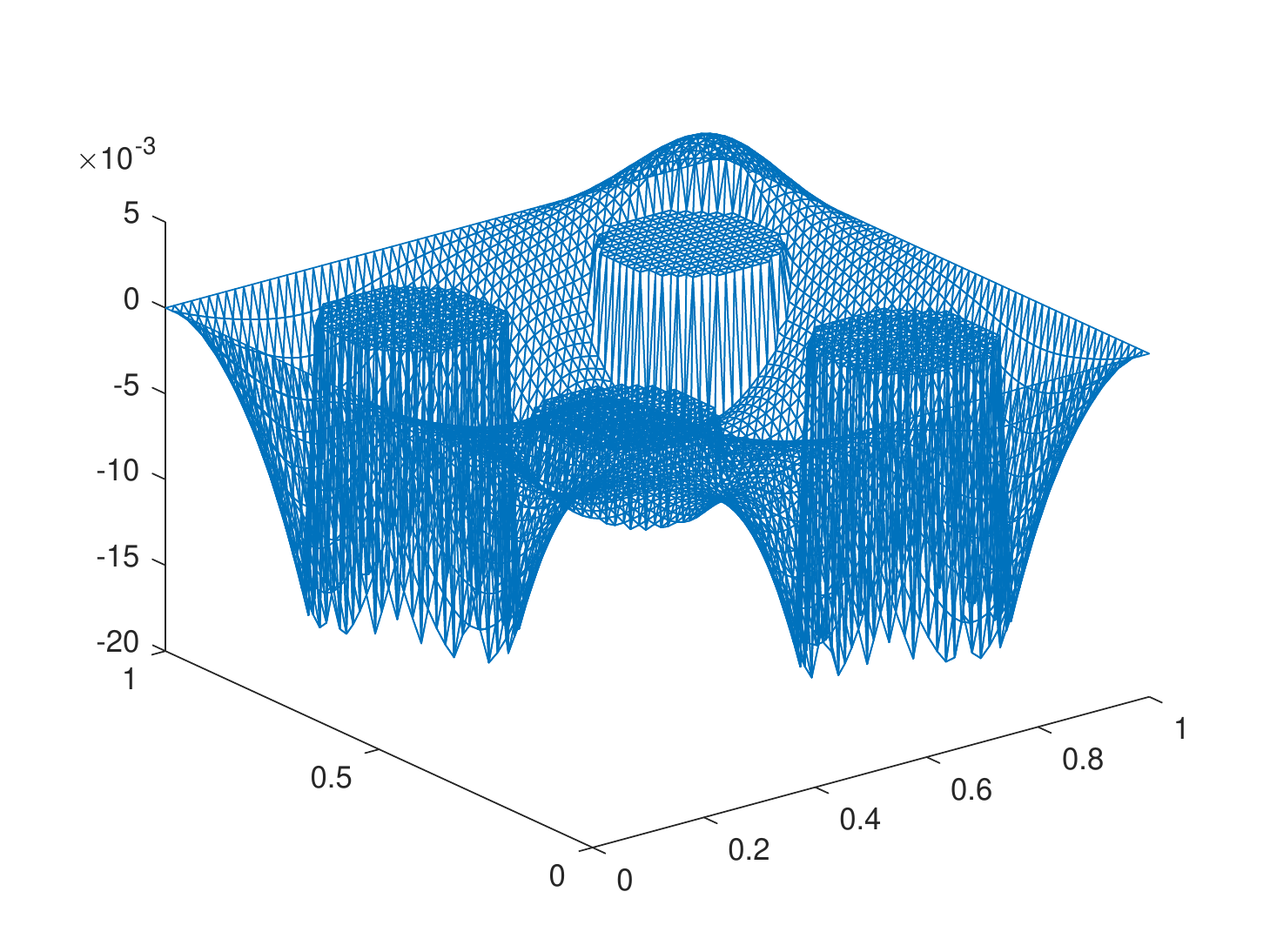}
\end{figure}

\medskip
\noindent\textbf{Example 2.}
We consider another case of the problem (\ref{Basic_Problem})--(\ref{state_eqn}) where the control region $\omega$ is a subset of the domain $\Omega$. Let $\Omega=\{(x_1,x_2)\in \mathbb{R}^2|0<x_1<1, 0<x_2<1\}$, $\omega=\{(x_1,x_2)\in \mathbb{R}^2|0<x_1<0.25, 0<x_2<0.25\}\subsetneq \Omega$ and $Q=\Omega\times(0,T), \mathcal{O}=\omega\times(0,T)$ with $T=1$. The regularization parameter $\alpha=10^{-6}$ and the admissible set is defined as
\begin{equation*}
{\mathcal{C}}=\{v|v\in L^\infty(\mathcal{O}), -300\leq v(x_1,x_2; t)\leq 300 ~\text{a.e. in} \, \mathcal{O}\}\subset{L^2(\mathcal{O})}.
\end{equation*}
The target function $y_d$ is given by
$
y_d=e^t\sin 4x_1\sin 4x_2,
$
and the coefficients $\nu=a_0=1$.

We set $\beta=3$ and $tol=10^{-3}$ throughout, and test various choices of the mesh size. The numerical results are summarized in Table \ref{Table_ex2}. Residuals and the objective functional values are plotted in Figure \ref{iterative_result_ex2}; numerical results for $y$ and $u$ with $h=\tau=2^{-6}$ at $t=0.5$ are presented in Figure \ref{numerical_result_ex2}. We observe that Algorithm \ref{ADMM_CG} is also very efficient and robust for the small control region case; and solving the $u$-subproblem (\ref{ADMM_u}) subject to the inexactness criterion (\ref{inexact_criterion}) reduces the computational cost significantly. Similar conclusions as those for Example 1 can be drawn for this example.

\begin{table}[ht]
	\setlength{\abovecaptionskip}{0pt}
	\setlength{\belowcaptionskip}{5pt}
	\centering
	\caption{Numerical comparison of Algorithm \ref{ADMM_CG} and $\hbox{ADMM}_{1e-k}$ for Example 2.}\label{Table_ex2}
	{\footnotesize\begin{tabular}{|c|c|c|c|c|c|c|}
			\hline
			Mesh&Algorithm&$\text{ADMM}_{Iter}$&Mean/Max CG&Time (s)&RelDis&Obj \\
			\hline
			
			&$\hbox{ADMM}_{1e-10}$ &14&51.50/62&8.56&0.9388&0.3726\\
			\cline{2-7}
			&$\hbox{ADMM}_{1e-8}$
			&14&41.86/52&6.64&0.9388&0.3726\\
			\cline{2-7}				
			$2^{-5}$&$\hbox{ADMM}_{1e-6}$&14&32.64/43&5.34&0.9388&0.3726\\
			\cline{2-7}	
			&$\hbox{ADMM}_{1e-4}$ &14&23.00/32&3.70&0.9388&0.3726\\
			\cline{2-7}
			&$\hbox{ADMM}_{1e-2}$ &$14$&13.71/23&2.24&0.9388&0.3726\\
			\cline{2-7}
			&{\bf{Algorithm 3}} &17&3.35/4&0.83&0.9388&0.3726\\
			\hline
			
			&$\hbox{ADMM}_{1e-10}$ &16&51.63/62&110.05&0.9428&0.3812\\
			\cline{2-7}	   &$\hbox{ADMM}_{1e-8}$&16&41.88/52&85.20&0.9428&0.3812\\
			\cline{2-7}	$2^{-6}$&$\hbox{ADMM}_{1e-6}$&16&32.31/43&64.43&0.9428&0.3812\\
			\cline{2-7}
			&$\hbox{ADMM}_{1e-4}$ &16&22.81/33&45.82&0.9428&0.3812\\
			\cline{2-7}
			&$\hbox{ADMM}_{1e-2}$ &$16$&13.25/23&27.15&0.9428&0.3812\\
			\cline{2-7}
			&{\bf{Algorithm 3}} &18&3.39/4&9.29&0.9428&0.3812\\
			
			\hline
			
			&$\hbox{ADMM}_{1e-10}$ &16&50.50/61&1834.32&0.9455&0.3821\\
			\cline{2-7}
			&$\hbox{ADMM}_{1e-8}$ &16&41.25/52&1550.68&0.9455&0.3821\\
			\cline{2-7}
			$2^{-7}$&$\hbox{ADMM}_{1e-6}$&16&31.81/42&1291.11&0.9455&0.3821\\
			\cline{2-7}
			&$\hbox{ADMM}_{1e-4}$ &16&22.13/32&883.59&0.9455&0.3821\\
			\cline{2-7}
			&$\hbox{ADMM}_{1e-2}$&$16$&12.81/23&401.55&0.9455&0.3821\\
			\cline{2-7}
			&{\bf{Algorithm 3}} &18&3.33/4&129.33&0.9455&0.3821\\
			\hline
			
			&$\hbox{ADMM}_{1e-10}$ &16&49.69/60&22540.18&0.9470&0.3817\\
			\cline{2-7}
			&$\hbox{ADMM}_{1e-8}$ &16&40.44/51&18869.58&0.9470&0.3817\\
			\cline{2-7}			
			$2^{-8}$&$\hbox{ADMM}_{1e-6}$&16&31.25/41&14969.83&0.9470&0.3817\\
			\cline{2-7}
			&$\hbox{ADMM}_{1e-4}$ &16&22.06/32&10437.38&0.9470&0.3817\\
			\cline{2-7}
			&$\hbox{ADMM}_{1e-2}$ & 16&12.63/22&6281.95&0.9470&0.3817\\
			\cline{2-7}
			&{\bf{Algorithm 3}} &18&3.33/4&1609.73&0.9470&0.3817\\
			\hline
		\end{tabular}
	}
\end{table}

\begin{figure}[ht]
	\setlength{\abovecaptionskip}{0pt}
	\setlength{\belowcaptionskip}{5pt}
	\caption{{Residuals (left) and objective functional values (right) with respect to outer ADMM iterations for Example 2.}}\label{iterative_result_ex2}
	\centering
	\includegraphics[width=0.48\textwidth]{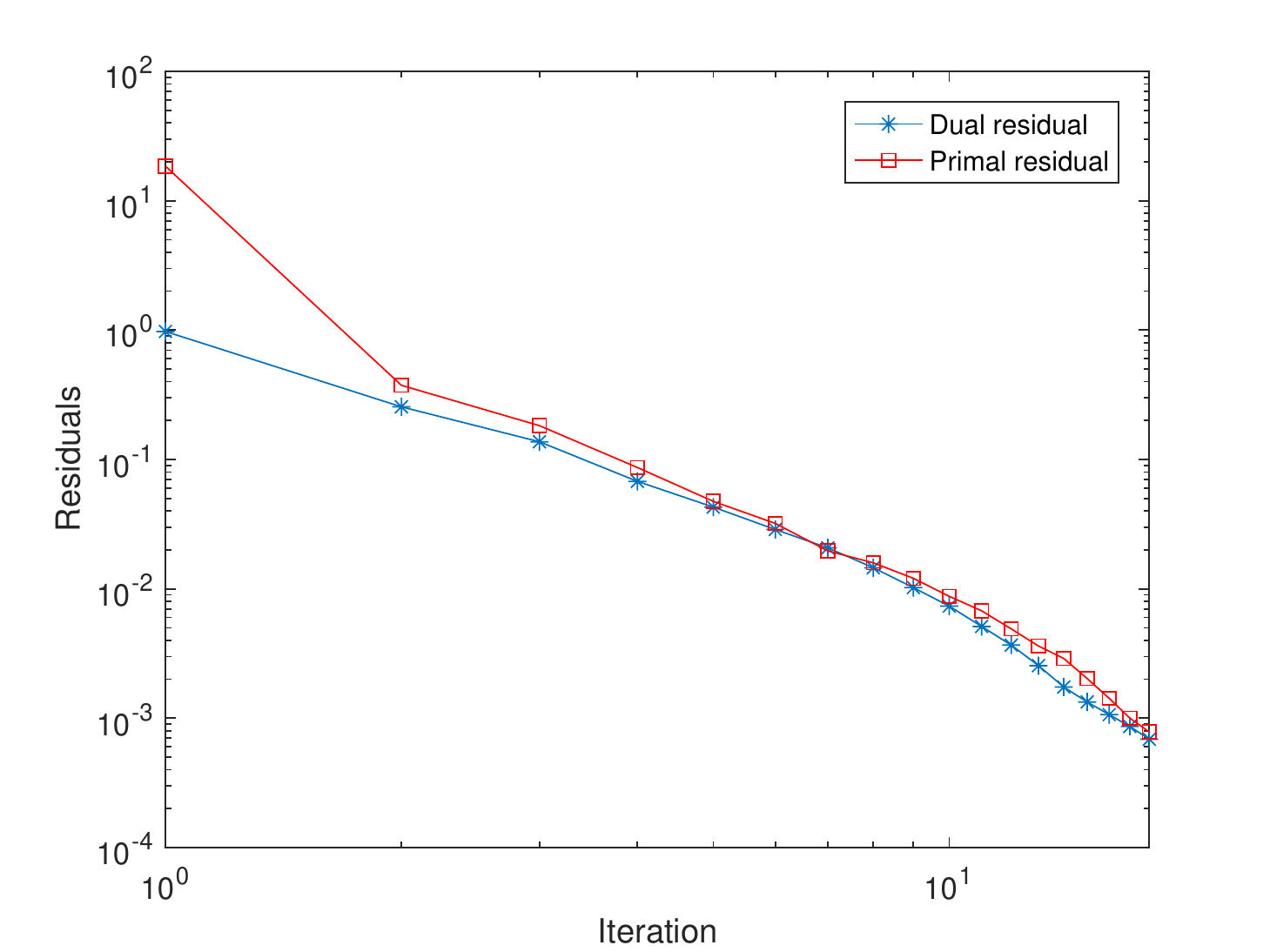}
	\includegraphics[width=0.48\textwidth]{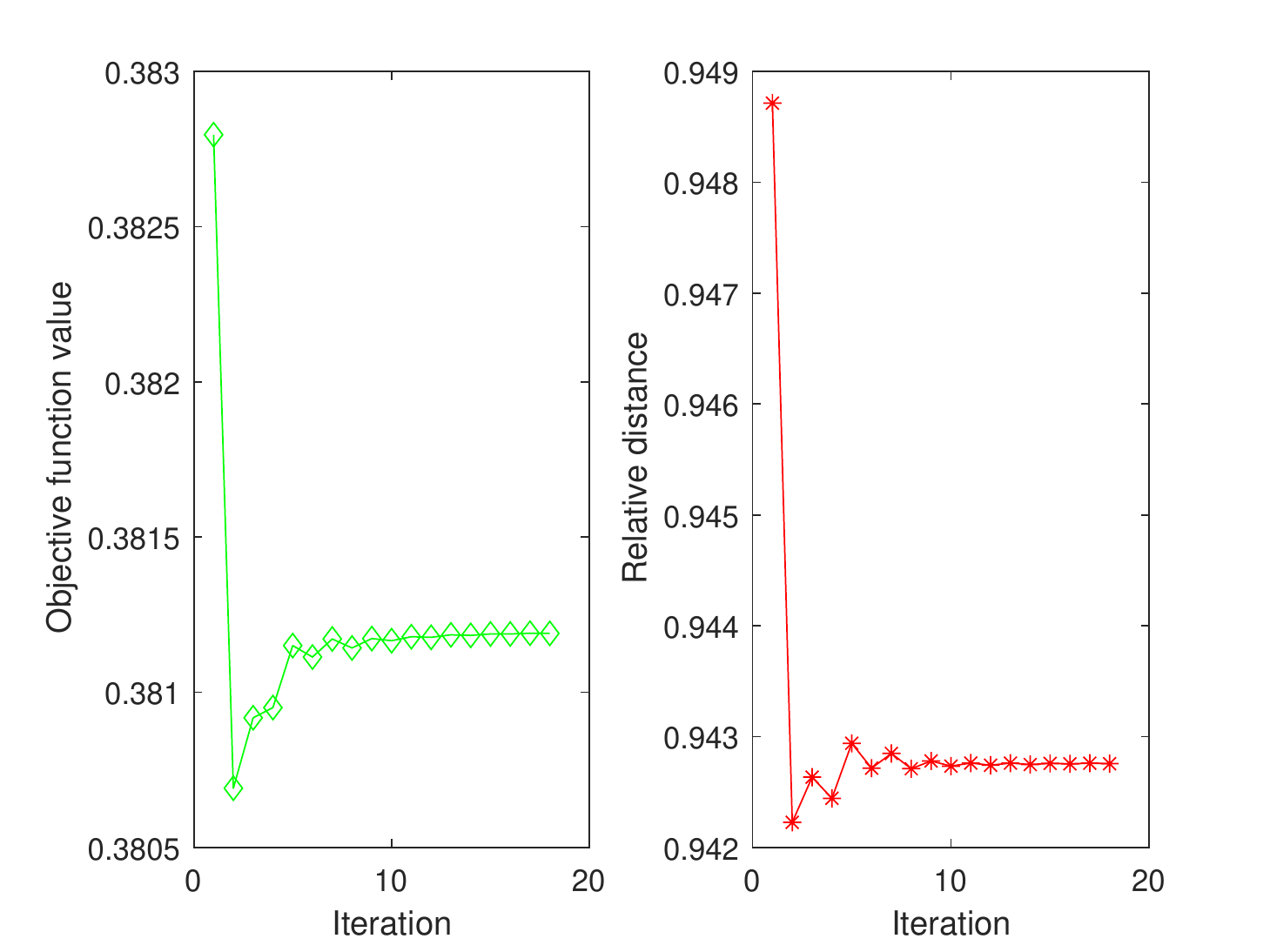}
\end{figure}

\begin{figure}[ht]
	\setlength{\abovecaptionskip}{0pt}
	\setlength{\belowcaptionskip}{5pt}
	\caption{{Numerical solutions $y$ (left) and $u$ (right) at $t=0.5$ for Example 2.}}\label{numerical_result_ex2}
	\centering
	\includegraphics[width=0.48\textwidth]{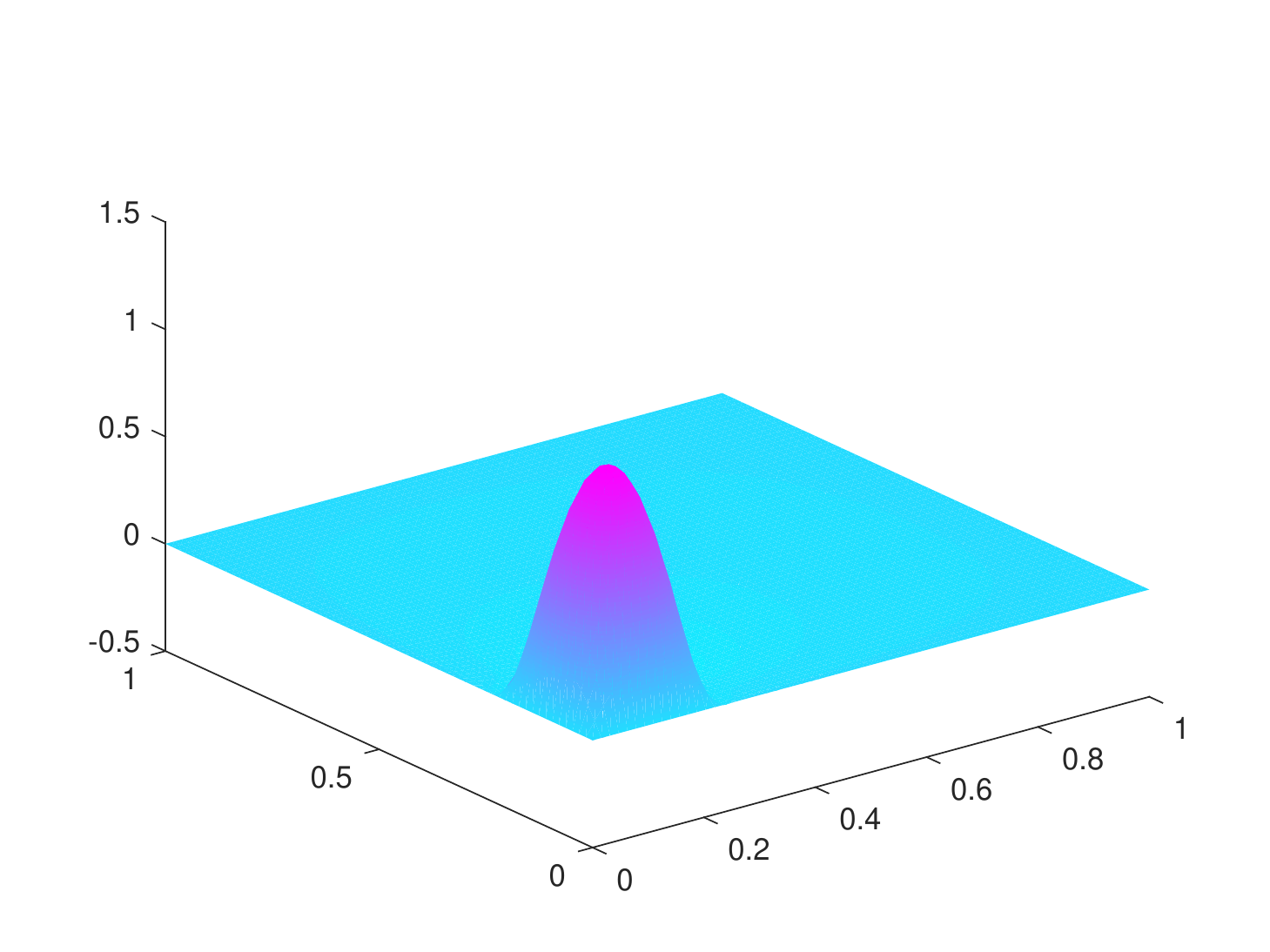}
	\includegraphics[width=0.48\textwidth]{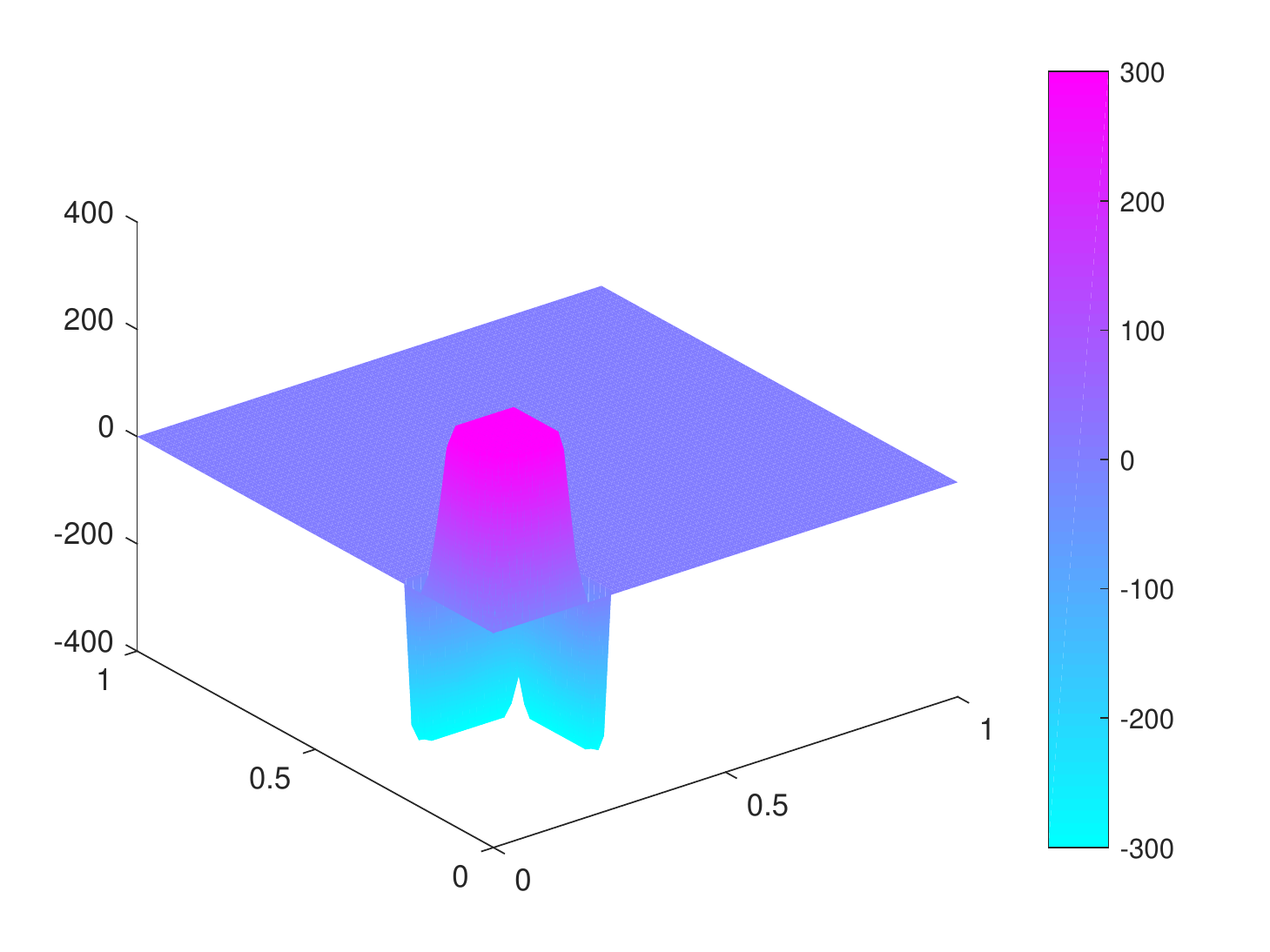}
\end{figure}

\section{Extensions}\label{se:extension}

In previous sections, our discussion is focused on the parabolic optimal control problem with control constraints (\ref{Basic_Problem})--(\ref{state_eqn}) in order to expose our main ideas clearly. The discussion can be easily extended to various other optimal control problems. For instances, the objective functional in (\ref{Basic_Problem}) can be replaced by the $L^1$-control cost functional in \cite{SB2017}, and the control variable $u$ can be replaced by the Neumann or Dirichlet boundary control variable in \cite{glowinski2008exact}. In addition, note that both of the proposed algorithmic design and the theoretical analysis are independent of the specific form of the solution operator $S$ defined in (\ref{def:S}), and they can be extended to various optimal control problems constrained by other linear PDEs. To be more concrete, it is clear that the definition of $e_k(u)$ in (\ref{eu_oper}) is originated from the optimality system of (\ref{ADMM_u}), and it only requires that the solution operator $S$ be affine (i.e., the linearity of the state equation (\ref{state_eqn})). Hence, the parabolic state equation in (\ref{state_eqn}) can be replaced by, e.g., the elliptic equation \cite{hinze12}, the wave equation \cite{glowinski1995exact}, the convection-diffusion equation \cite{glowinski2008exact}, or the fractional parabolic equation \cite{BTY2014}. In this section, we choose two cases to delineate the extensions. Some notations and discussions analogous to previous ones are not repeated for succinctness.

\subsection{Optimal Control Problems Constrained by the Wave Equation}

We first consider the extension to an optimal control problem constrained by the wave equation.

\subsubsection{Model}

We consider the following optimal control problem with control constraints:
\begin{equation}\label{wave_control}
\begin{aligned}
\min_{u\in {\mathcal{C}}, y\in L^2(Q)} \quad  &\frac{1}{2}\iint_Q |y-y_d|^2 dxdt+\frac{\alpha}{2}\iint_{\mathcal{O}}|u|^2 dxdt,\\
\end{aligned}
\end{equation}
and it is subject to the wave equation
\begin{equation}\label{wave_eqn}
\frac{\partial^2 y}{\partial t^2}-\Delta y=u\chi_{\mathcal{O}}~ \text{in}~ \Omega\times(0,T), \quad
y=0~ \text{on}~ \Gamma\times(0,T),\quad
y(0)=y_0,~ \frac{\partial y}{\partial t}(0)=y_1.
\end{equation}
Notation in (\ref{wave_control})--(\ref{wave_eqn}) is the same as that in (\ref{Basic_Problem})--(\ref{state_eqn}) except that the initial conditions $y_0 \in H_0^1(\Omega)$
and $y_1 \in L^2(\Omega)$. For the existence, uniqueness, and regularity of the solution of (\ref{wave_control})--(\ref{wave_eqn}), we refer to, e.g., \cite{lions1971optimal}.

For the special case of (\ref{wave_control})--(\ref{wave_eqn}) where $d=1$ or $\omega=\Omega$, SSN type methods have been studied in the literature, see, e.g., \cite{KKV2011,LLX2016,LP2019}. For the general case of (\ref{wave_control})--(\ref{wave_eqn}) where $d\ge 2$ and $\omega\subsetneq \Omega$, similar difficulties as those mentioned in the introduction for the problem (\ref{Basic_Problem})--(\ref{state_eqn}) arise if SSN type methods are applied. Below, we briefly show the details of extending Algorithm \ref{InADMM_algorithm} to the general case of (\ref{wave_control})--(\ref{wave_eqn}).

\subsubsection{Algorithm}

Similarly, the direct implementation of ADMM to the problem (\ref{wave_control})--(\ref{wave_eqn}) reads as
\begin{subequations}\label{ADMM_s}
	\begin{numcases}
	~u^{k+1}=\arg\min_{u\in L^2(\mathcal{O})}\bar{L}_\beta(u, z^{k},\lambda^k),\label{ADMM_us}\\
	z^{k+1} =\arg\min_{z\in L^2(\mathcal{O})}\bar{L}_\beta(u^{k+1},z,\lambda^k),\label{ADMM_zs}\\
	\lambda^{k+1} = \lambda^k-\beta(u^{k+1}-z^{k+1}),\label{ADMM_lambdas}
	\end{numcases}
\end{subequations}
where the augmented Lagrangian functional $\bar{L}_\beta(u, z,\lambda)$ has the same form as the $L_\beta(u, z,\lambda)$ in (\ref{ADMM}) except that the solution operator $S$ is associated with the wave equation (\ref{wave_eqn}) instead of the parabolic equation (\ref{state_eqn}).

For the $z$-subproblem (\ref{ADMM_zs}), it amounts to computing the projection onto the admissible set $\mathcal{C}$; and the $u$-subproblem (\ref{ADMM_us}) is an unconstrained optimal control problem subject to the wave equation (\ref{wave_eqn}). Note that the $u$-subproblem (\ref{ADMM_us}) shares the same numerical challenges as the subproblem (\ref{ADMM_u}); we may apply the CG method such as \cite{glowinski2008exact} to solve it iteratively at each iteration. To propose the inexactness criterion, we first need to introduce a residual $e_k(u)$ for the $u$-subproblem (\ref{ADMM_us}) as we have done in Section \ref{sec:InexactADMM}. For this purpose, inspired by (\ref{eu_oper}), we define $e_k(u)$ as
\begin{equation*}
e_{k}(u):= (1+\beta) u+S^*(\frac{1}{\alpha}(S(u)-y_d))-\beta z^{k}- \lambda^k,
\end{equation*}
where $S: L^2(\mathcal{O})\longrightarrow L^2(Q)$ is the solution operator associated with the wave equation (\ref{wave_eqn}) and $S^*: L^2(Q)\longrightarrow L^2(\mathcal{O})$ is the adjoint operator of $S$. It is easy to show that
\begin{equation}\label{eu}
e_{k}(u)= (1+\beta) u+p|_{\mathcal{O}}-\beta z^{k}- \lambda^k,
\end{equation}
where $p$ is the successive solution of the wave equation (\ref{wave_eqn}) and the following adjoint equation:
\begin{equation}\label{adjoint_wave}
\frac{\partial^2 p}{\partial t^2}-\Delta p=\frac{1}{\alpha}(y-y_d)~ \text{in}~ \Omega\times(0,T), \quad
p=0~\text{on}~ \Gamma\times(0,T),\quad
p(T)=0,~\frac{\partial p}{\partial t}(T)=0.
\end{equation}
Then, the inexactness criterion for computing $u^{k+1}$ in (\ref{ADMM_us}) is
\begin{equation}\label{inexact_criterion_s}
\|e_{k}(u^{k+1})\|\leq\sigma\|e_{k}(u^{k})\|,
\end{equation}
with the constant $\sigma$ given in (\ref{sigma}).

Although the same letter in (\ref{def_eu}) is used, the definition of $e_k(u)$ in (\ref{eu})
is determined by the wave equation (\ref{wave_eqn}) and the adjoint equation (\ref{adjoint_wave}). It is thus different from (\ref{def_eu}) for the parabolic equation (\ref{state_eqn}) and its adjoint equation (\ref{dis_adjoint}). Embedding the inexactness criterion (\ref{inexact_criterion_s}) into the ADMM scheme (\ref{ADMM_s}), an inexact version of the ADMM (\ref{ADMM_s}) similar as Algorithm 1 is readily available for the problem (\ref{wave_control})--(\ref{wave_eqn}), and its convergence can be proved similarly. We omit the details.

\subsubsection{Numerical Results}

We test the ADMM scheme (\ref{ADMM_s}) with the inexactness criterion (\ref{inexact_criterion_s}), and report some preliminary numerical results for the problem (\ref{wave_control})--(\ref{wave_eqn}) where $\omega\subsetneq \Omega$ and $d=2$.

\medskip
\noindent\textbf{Example 3}. Let us consider the following optimal control problem constrained by the wave equation with a known exact solution:
\begin{equation}\label{sup_ex}
\begin{aligned}
\min_{u\in {\mathcal{C}}, y\in L^2(Q)} \quad  &\frac{1}{2}\iint_Q |y-y_d|^2dxdt+\frac{\alpha}{2}\iint_{\mathcal{O}}|u|^2dxdt\\
{\hbox{s.t.}}\qquad &
\left\{
\begin{aligned}
&\frac{\partial^2 y}{\partial t^2}-\Delta y=f+u\chi_{\mathcal{O}},&\quad \text{in}\quad \Omega\times(0,T), \\
&y=0,&\quad \text{on}\quad \Gamma\times(0,T),\\
& y(0)=y_0,~\frac{\partial y}{\partial t}(0)=y_1,&
\end{aligned}
\right.
\end{aligned}
\end{equation}
where $\Omega=\{(x_1,x_2)\in \mathbb{R}^2|0<x_1<1,  0<x_2<1\}$, $T=1$ and the control region $\omega=\{(x_1,x_2)\in \mathbb{R}^2|0<x_1<0.5, 0<x_2<0.5\}\subsetneq\Omega$.
In addition, we set
\begin{eqnarray*}
	y=e^t\sin\pi x_1\sin\pi x_2,~ p=\sqrt{\alpha} (t-T)^2\sin \pi x_1\sin \pi x_2,~ u=\min(a,\max(b,-\frac{1}{\alpha}{p|_{\mathcal{O}}})),
\end{eqnarray*}
and
\begin{equation*}
f=-u\chi_{\mathcal{O}}+\frac{\partial^2 y}{\partial t^2}-\Delta y, ~ y_d=y-\frac{\partial^2 p}{\partial t^2}+\Delta p,~y_0=\sin \pi x
_1\sin \pi x_2,~ y_1=\sin \pi x
_1\sin \pi x_2.
\end{equation*}
It is easy to verify that $(u^*,y^*):=(u, y)$ is the solution point of the problem (\ref{sup_ex}). Moreover, we set the regularization parameter $\alpha=10^{-4}$ and 
$
{\mathcal{C}}=\{v|v\in L^\infty(\mathcal{O}), -5\leq v(x_1,x_2; t)\leq 0 ~\text{a.e. in} \, \mathcal{O}\}\subset{L^2(\mathcal{O})}.
$

By implementing the CG method to solve the $u$-subproblem (\ref{ADMM_us}) subject to the inexactness criterion (\ref{eu}), an ADMM--CG iterative scheme similar as Algorithm \ref{ADMM_CG} can be obtained for the problem (\ref{wave_control})--(\ref{wave_eqn}). For numerical discretization, we employ the central difference method (with step size $\tau$) for the time discretization and piecewise linear finite element method (with mesh size $h$) for the space discretization. All notations and remarks in Section \ref{sec:numerical} are used here again.
Let $\beta=5$ and $tol=10^{-5}$. We test the cases where the space mesh size $h$ and the time step size $\tau$ are $h=\tau={2^{-i}}$ with $i=5, 6, 7, 8$. Numerical results are presented in Table \ref{Table_supp}.
\begin{table}[ht]
	\setlength{\abovecaptionskip}{0pt}
	\setlength{\belowcaptionskip}{4pt}
	\centering
	\caption{Numerical comparison of ADMM--CG and $\hbox{ADMM}_{1e-k}$ for Example 3.}\label{Table_supp}
	{\footnotesize\begin{tabular}{|c|c|c|c|c|c|c|}
			\hline
			{ Mesh}&{ Algorithm}&${\tiny\text{ADMM}_{Iter}}$&{ Mean/Max CG}&{ Time (s)}&{ RelDis}&{ Obj}\\
			\hline
			
			&$\hbox{ADMM}_{1e-10}$ &46&17.69/72&16.26&3.8248$\times 10^{-3}$&1.6716$\times 10^{-3}$\\
			\cline{2-7}
			&$\hbox{ADMM}_{1e-8}$
			&46&12.75/31&11.89&3.8248$\times 10^{-3}$&1.6716$\times 10^{-3}$\\
			\cline{2-7}				
			$2^{-5}$&$\hbox{ADMM}_{1e-6}$&46&8.54/18&8.05&3.8248$\times 10^{-3}$&1.6716$\times 10^{-3}$\\
			\cline{2-7}	
			&$\hbox{ADMM}_{1e-4}$ &46&4.89/11&4.86&3.8248$\times 10^{-3}$&1.6716$\times 10^{-3}$\\
			\cline{2-7}
			&$\hbox{ADMM}_{1e-2}$ &$\sim$&$\sim$&$\sim$&$\sim$&$\sim$\\
			\cline{2-7}
			&{\bf{ADMM--CG}} &46&1.96/2&2.23&3.8248$\times 10^{-3}$&1.6716$\times 10^{-3}$\\
			\hline
			
			&$\hbox{ADMM}_{1e-10}$ &48&16.75/23&168.04&3.7670$\times 10^{-3}$&1.6197$\times 10^{-3}$\\
			\cline{2-7}	   &$\hbox{ADMM}_{1e-8}$&48&12.85/20&109.32&3.7670$\times 10^{-3}$&1.6197$\times 10^{-3}$\\
			\cline{2-7}	$2^{-6}$&$\hbox{ADMM}_{1e-6}$&48&8.77/15&89.06&3.7670$\times 10^{-3}$&1.6197$\times 10^{-3}$\\
			\cline{2-7}
			&$\hbox{ADMM}_{1e-4}$ &48&5.00/11&54.21&3.7670$\times 10^{-3}$&1.6197$\times 10^{-3}$\\
			\cline{2-7}
			&$\hbox{ADMM}_{1e-2}$ &$\sim$&$\sim$&$\sim$&$\sim$&$\sim$\\
			\cline{2-7}
			&{\bf{ADMM--CG}} &49&1.96/2&24.29&3.7670$\times 10^{-3}$&1.6197$\times 10^{-3}$\\
			
			\hline
			
			&$\hbox{ADMM}_{1e-10}$ &49&16.73/23&3511.81&3.7169$\times 10^{-3}$&1.5845$\times 10^{-3}$\\
			\cline{2-7}
			&$\hbox{ADMM}_{1e-8}$ &49&12.78/19&2198.52&3.7169$\times 10^{-3}$&1.5845$\times 10^{-3}$\\
			\cline{2-7}
			$2^{-7}$&$\hbox{ADMM}_{1e-6}$&49&8.76/15&1814.87&3.7169$\times 10^{-3}$&1.5845$\times 10^{-3}$\\
			\cline{2-7}
			&$\hbox{ADMM}_{1e-4}$ &50&4.90/11&1131.26&3.7169$\times 10^{-3}$&1.5845$\times 10^{-3}$\\
			\cline{2-7}
			&$\hbox{ADMM}_{1e-2}$&$\sim$&$\sim$&$\sim$&$\sim$&$\sim$\\
			\cline{2-7}
			&{\bf{ADMM--CG}} &50&1.96/2&415.58&3.7169$\times 10^{-3}$&1.5845$\times 10^{-3}$\\
			\hline
			
			&$\hbox{ADMM}_{1e-10}$ &50&16.42/22&49802.84&3.6863$\times 10^{-3}$&1.5643$\times 10^{-3}$\\
			\cline{2-7}
			&$\hbox{ADMM}_{1e-8}$ &50&12.46/19&31824.46&3.6863$\times 10^{-3}$&1.5643$\times 10^{-3}$\\
			\cline{2-7}			
			$2^{-8}$&$\hbox{ADMM}_{1e-6}$&50&8.54/15&24823.09&3.6863$\times 10^{-3}$&1.5643$\times 10^{-3}$\\
			\cline{2-7}
			&$\hbox{ADMM}_{1e-4}$ &50&4.94/11&10533.96&3.6863$\times 10^{-3}$&1.5643$\times 10^{-3}$\\
			\cline{2-7}
			&$\hbox{ADMM}_{1e-2}$ & $\sim$&$\sim$&$\sim$&$\sim$&$\sim$\\
			\cline{2-7}
			&{\bf{ADMM--CG}} &51&1.96/2&4561.64&3.6863$\times 10^{-3}$&1.5643$\times 10^{-3}$\\
			\hline
		\end{tabular}
	}
\end{table}

According to Table \ref{Table_supp}, the ADMM--CG iterative scheme is also very efficient for the general case of the problem (\ref{wave_control})--(\ref{wave_eqn}) where $\omega\subsetneq\Omega$ and $d=2$. Similar as the parabolic case, it suffices to solve the $u$-subproblem (\ref{ADMM_us}) inexactly subject to the criterion (\ref{inexact_criterion_s}). The independence of the convergence to the mesh size of discretization is also observed.

Evolutions of the residuals and objective functional values with respect to the outer ADMM iterations are plotted in Figure \ref{iterative_result_sup}. These curves indicate the fast convergence of the ADMM--CG, despite the fact that the theoretical worst-case convergence rate is only $O(1/K)$. In addition, the iterative errors $\|y^k-y^*\|$ and $\|u^k-u^*\|$ in Figure \ref{iterative_result_sup} (right) show that the discretization errors dominate the total errors of the numerical solution. This means the ADMM--CG finds a rather precise iterative solution very fast. The control variable $u$,  state variable $y$, and the errors $u^*-u$ and $y^*-y$ at $t=0.75$  with $h=\tau=2^{-6}$ are depicted in Figures \ref{numerical_result_sup} and \ref{error_sup}, respectively.

\begin{figure}[ht]
	\setlength{\abovecaptionskip}{0pt}
	\setlength{\belowcaptionskip}{5pt}
	\caption{Residuals (left), objective functional value (middle), and errors of $u$ and $y$ (right) with respect to the outer ADMM iterations for Example 3.}\label{iterative_result_sup}
	\centering
	\includegraphics[width=0.32\textwidth]{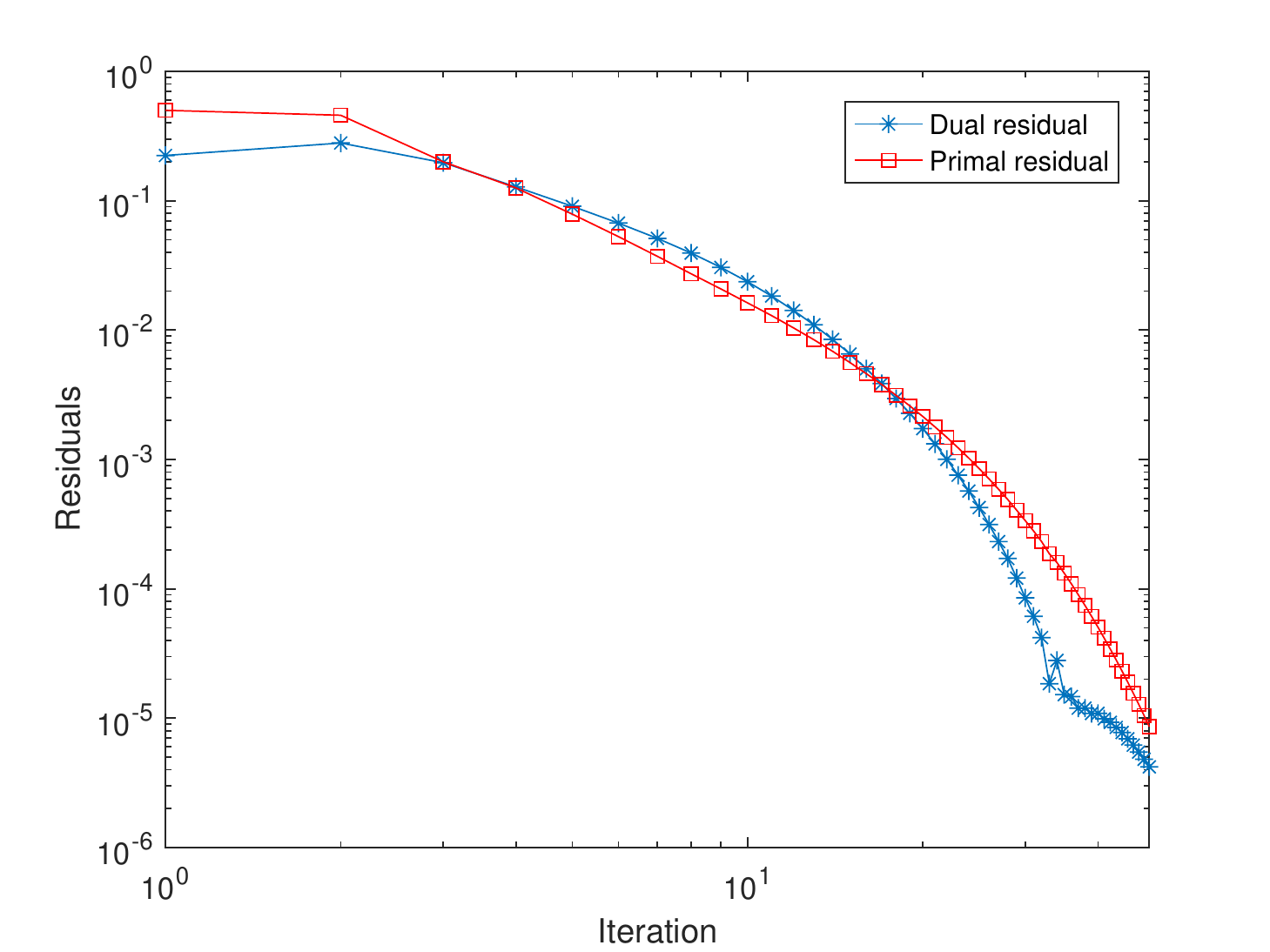}
	\includegraphics[width=0.32\textwidth]{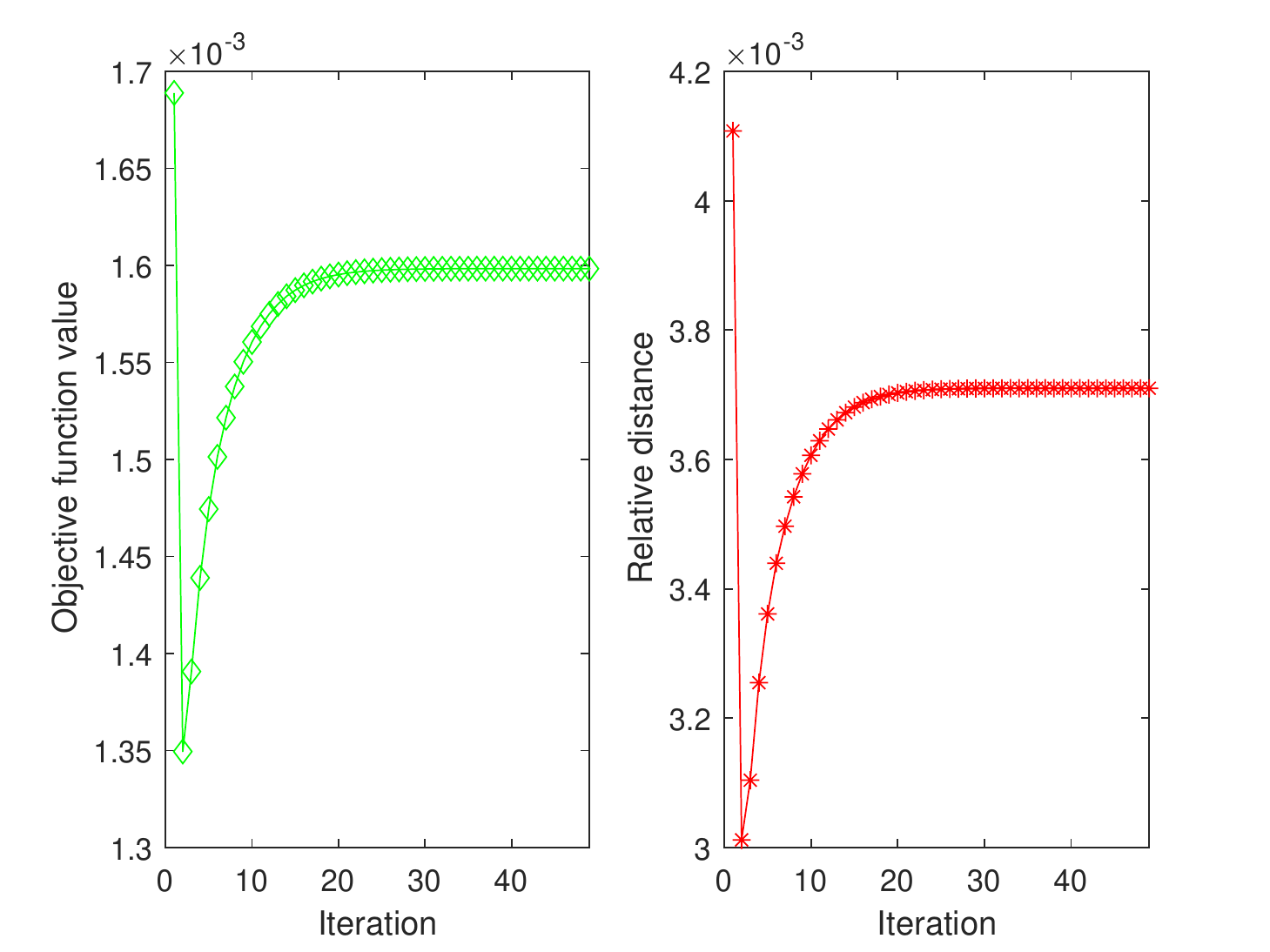}
	\includegraphics[width=0.32\textwidth]{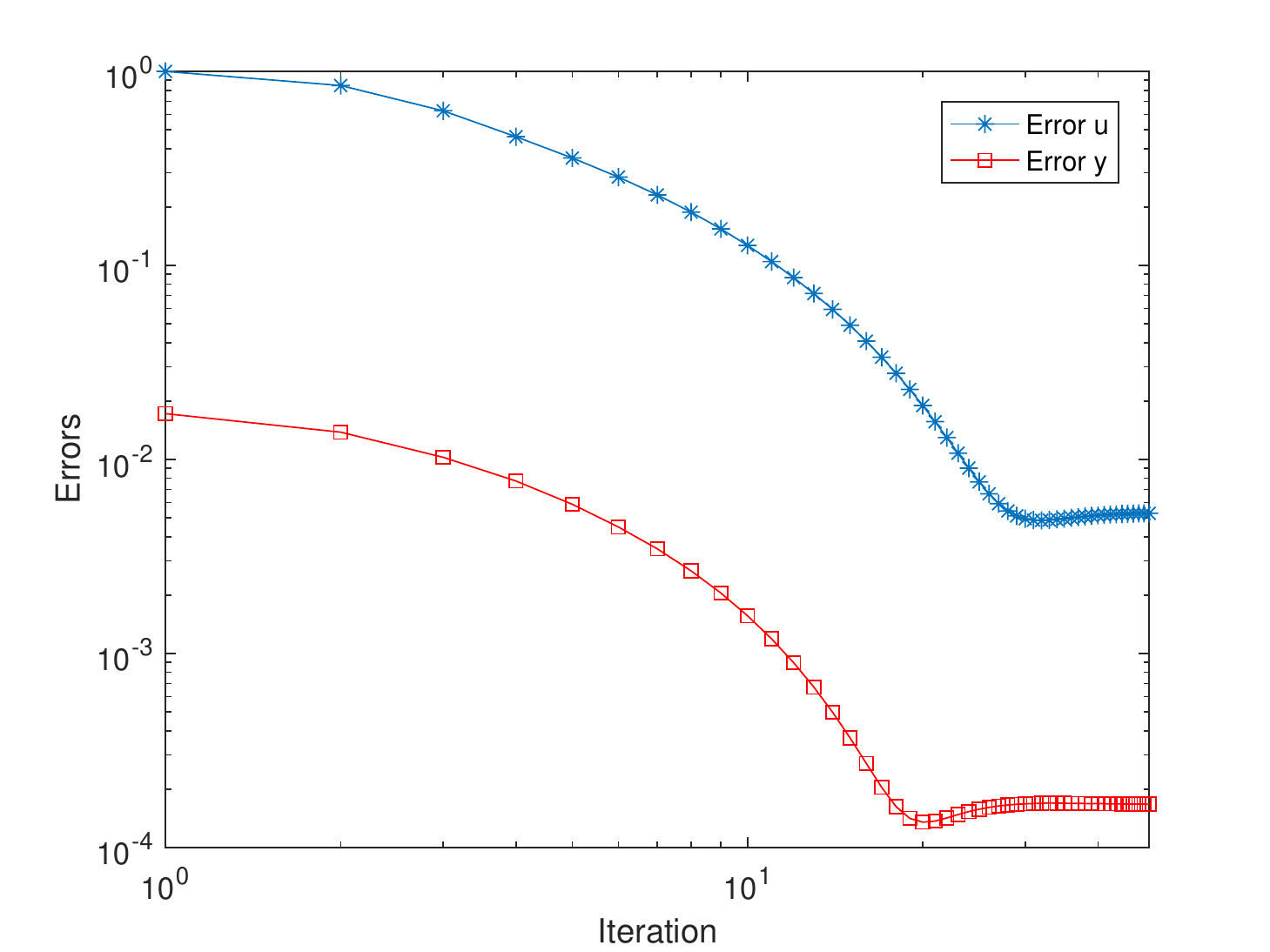}
\end{figure}

\begin{figure}[ht]\caption{Numerical solutions $u$ (left) and $y$ (right) at $t=0.75$ for Example 3.}\label{numerical_result_sup}
	\setlength{\abovecaptionskip}{0pt}
	\setlength{\belowcaptionskip}{5pt}
	\centering
	\includegraphics[width=0.48\textwidth]{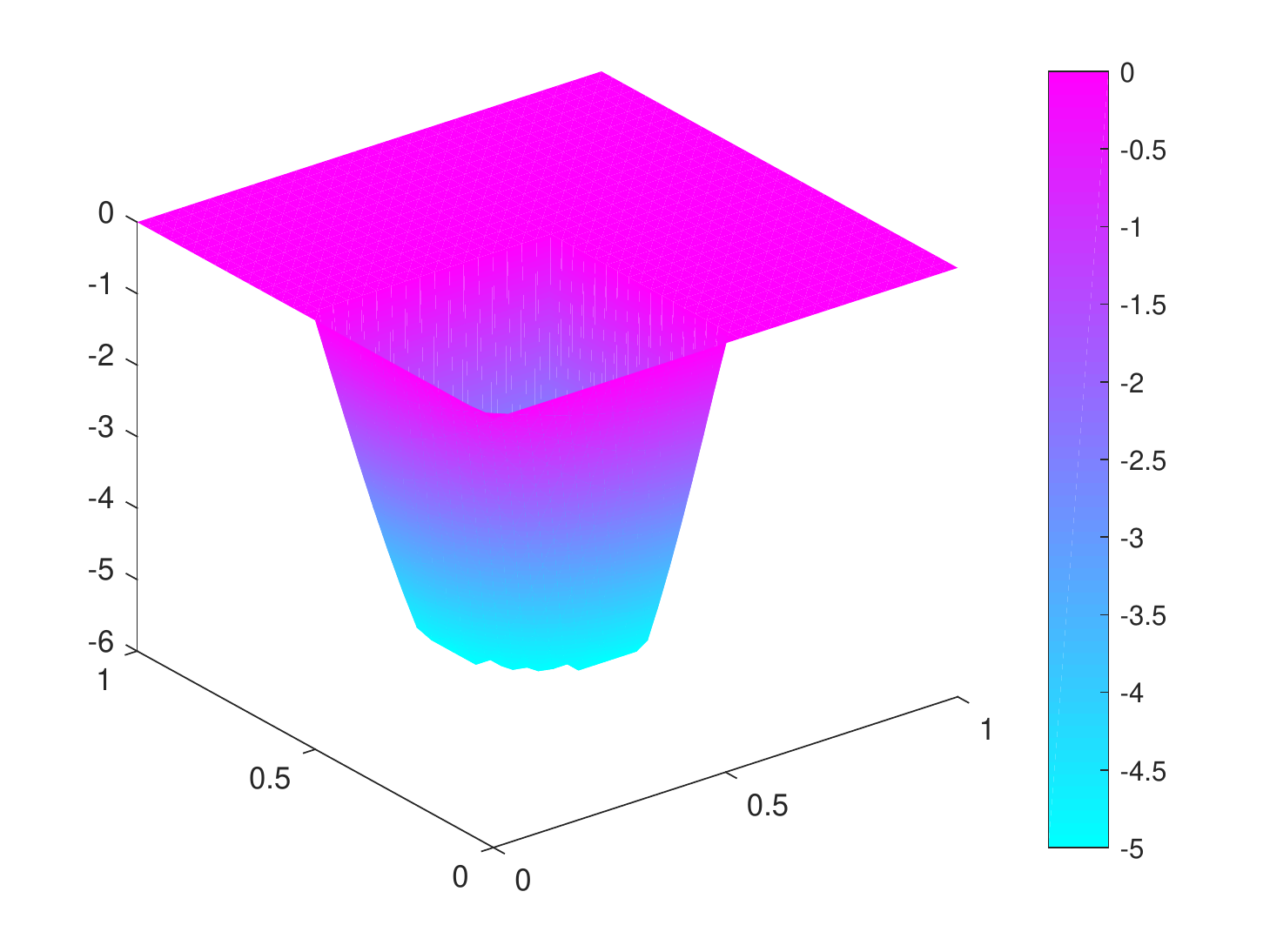}
	\includegraphics[width=0.48\textwidth]{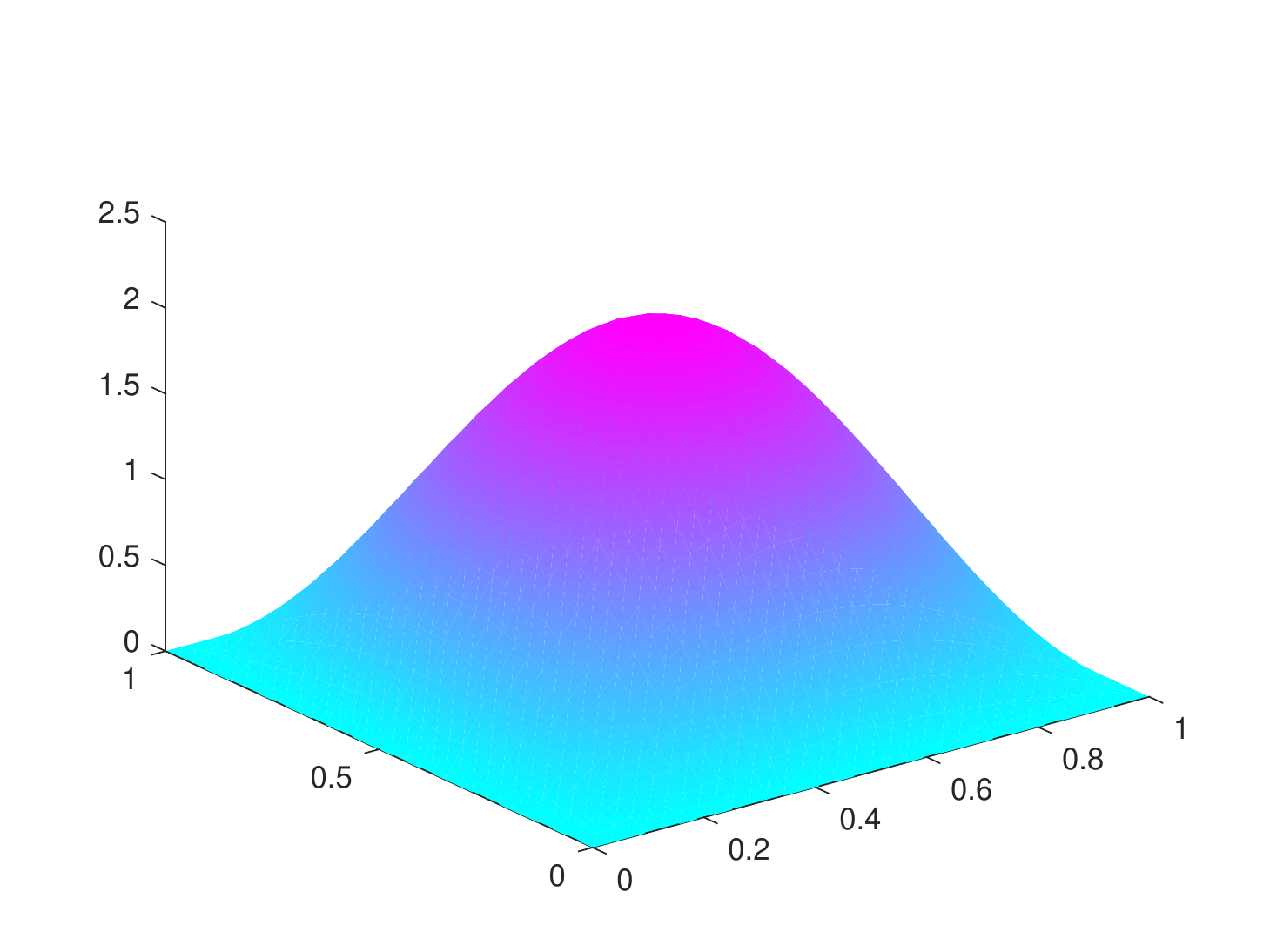}
\end{figure}

\begin{figure}[ht]
	\setlength{\abovecaptionskip}{0pt}
	\setlength{\belowcaptionskip}{5pt}
	\caption{Errors $u^*-u$ (left) and $y^*-y$ (right) at $t=0.75$ for Example 3.}\label{error_sup}
	\centering
	\includegraphics[width=0.48\textwidth]{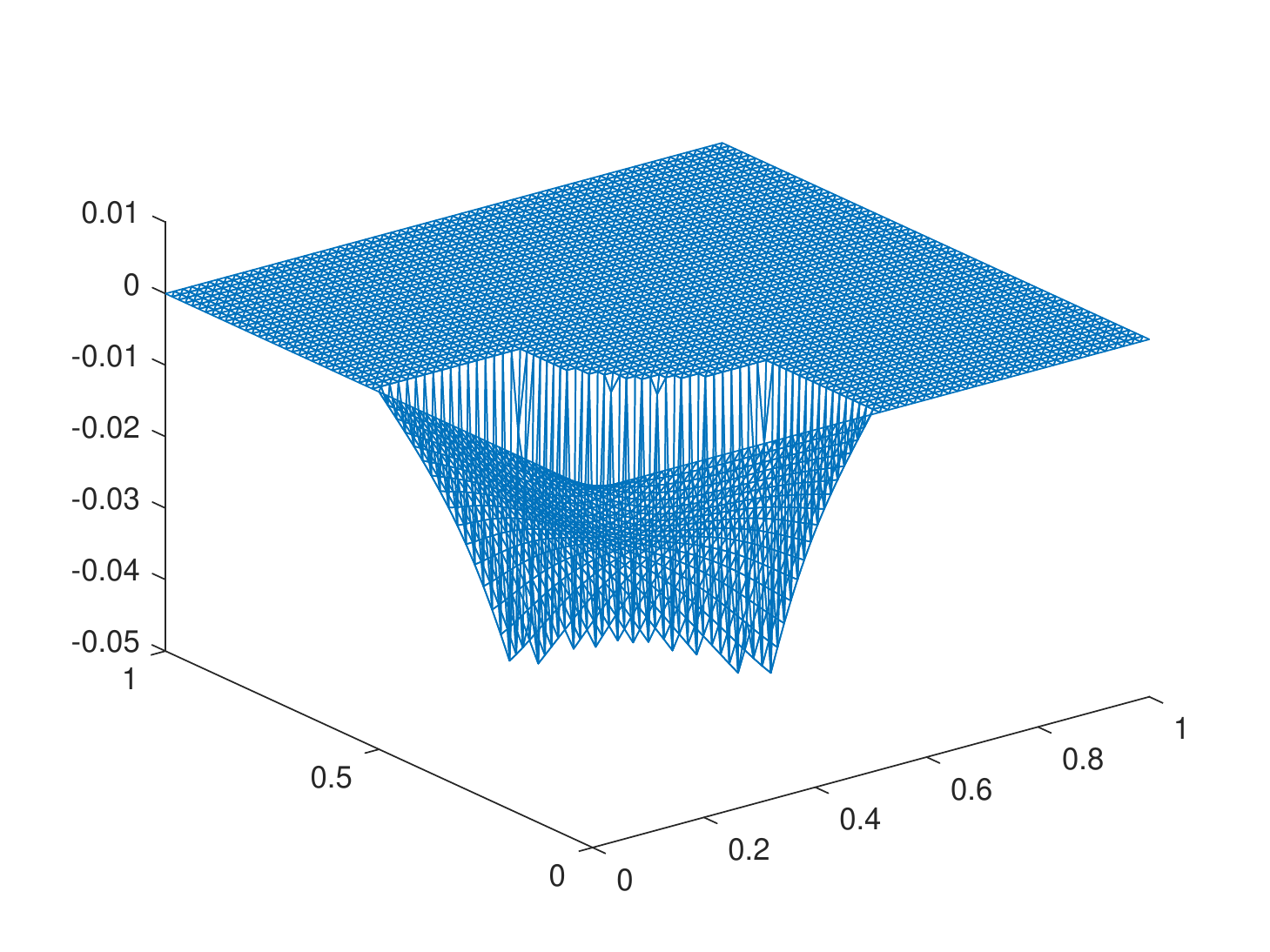}
	\includegraphics[width=0.48\textwidth]{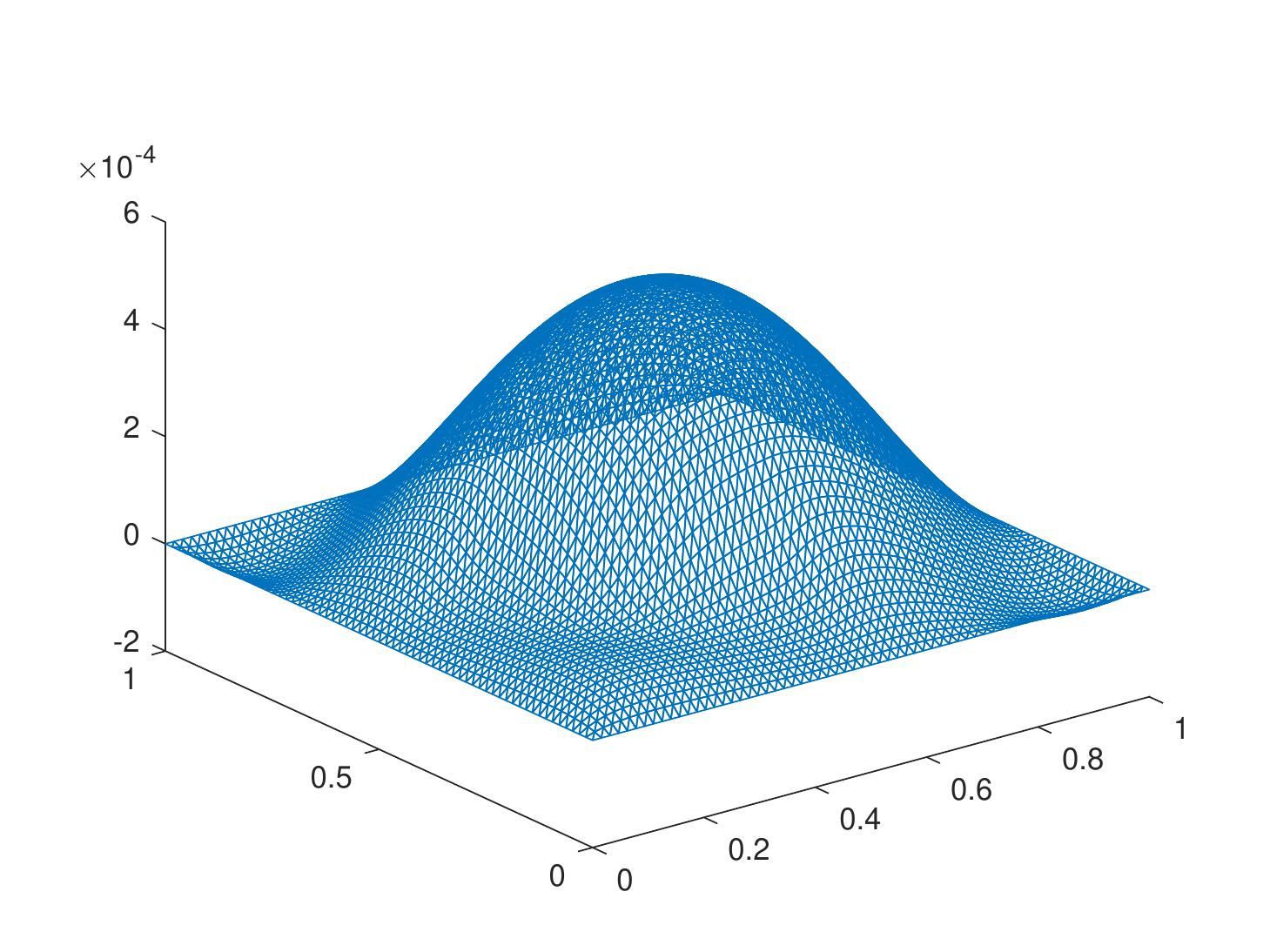}
\end{figure}

\subsection{Extension to Elliptic Optimal Control Problems with Control Constraints}

Our discussion can also be extended to various elliptic optimal control problems with control constraints. It is well known that SSN type methods are very efficient for solving elliptic optimal control problems, see, e.g., \cite{BKI99,hinze2008optimization,hinze12,KR2002,pearson2012,porcelli2015preconditioning,schiela2014operator,SW12} for a few references. In this subsection, we choose the particular SSN method in \cite{porcelli2015preconditioning} for numerical comparison.

\subsubsection{Model}

We consider the following elliptic optimal control problem with control constraints:
\begin{equation}\label{ell_obj}
\underset{u\in\mathcal{C},y\in H^1_0(\Omega)}{\min} J(y,u) =\frac{1}{2}\|{y}-{y_d}\|_{L^2(\Omega)}^2+\frac{\alpha}{2}\|{u}\|_{L^2(\Omega)}^2,
\end{equation}
subject to the
following elliptic equation:
\begin{equation}\label{ell_state}
-\Delta y=u ~ \text{in}~ \Omega,\quad
y=0~ \text{on}~ \Gamma,
\end{equation}
and the admissible set $\mathcal{C} $ is defined by
\begin{equation*}\label{admissible_set}
\mathcal{C} =\{u\in L^\infty(\Omega)| a\leq u(x_1,x_2)\leq b, ~\text{a.e.~in}~ \Omega \}\subset L^2(\Omega),
\end{equation*}
where $a$ and $b$ are given constants.

\subsubsection{Algorithm}

Similar as (\ref{ADMM}), implementation of the ADMM to the problem (\ref{ell_obj})--(\ref{ell_state}) is
\begin{subequations}\label{ADMM_se}
	\begin{numcases}
	~u^{k+1}=\arg\min_{u\in L^2(\Omega)}\tilde{L}_\beta(u, z^{k},\lambda^k),\label{ADMM_use}\\
	z^{k+1} =\arg\min_{z\in L^2(\Omega)}\tilde{L}_\beta(u^{k+1},z,\lambda^k),\label{ADMM_zse}\\
	\lambda^{k+1} = \lambda^k-\beta(u^{k+1}-z^{k+1}).\label{ADMM_lambdase}
	\end{numcases}
\end{subequations}
Above, the augmented Lagrangian functional $\tilde{L}_\beta(u, z,\lambda)$ is defined as
\begin{equation*}
\tilde{L}_\beta(u, z,\lambda):=\tilde{J}(u)+I_{\mathcal{C}}(z)-(\lambda,u-z)_{L^2(\Omega)}+\frac{\beta}{2}\|u-z\|_{L^2({\Omega})}^2,
\end{equation*}
where
$\tilde{J}(u) =\frac{1}{2\alpha}\|Su-{y_d}\|_{L^2(\Omega)}^2+\frac{1}{2}\|{u}\|_{L^2(\Omega)}^2$,
and $S: L^2(\Omega)\longrightarrow L^2(\Omega)$ is the solution operator associated with the elliptic equation (\ref{ell_state}).

Similarly, it is easy to show that the $z$-subproblem (\ref{ADMM_zse}) is essentially computing the projection onto the admissible set $\mathcal{C}$; and the $u$-subproblem (\ref{ADMM_use}) is an unconstrained optimal control problem subject to the elliptic equation (\ref{ell_state}), which can be iteratively solved by some existing methods, e.g., the preconditioned MinRes method in \cite{pearson2012}. In a way similar as what we have done for the problem (\ref{Basic_Problem})--(\ref{state_eqn}), we can propose the following inexactness criterion for solving the $u$-subproblem (\ref{ADMM_use}) inexactly:
\begin{equation}\label{ell_inexact}
\|e_k(u^{k+1})\|_{L^2(\Omega)}\leq\sigma \|e_k(u^k)\|_{L^2(\Omega)},
\end{equation}
where the constant $\sigma$ is given in (\ref{sigma}) and $e_k(u)$ is defined as
\begin{equation*}
e_k(u)=(1+\beta)u+p-\beta z^k-\lambda^k.
\end{equation*}
Here, the adjoint variable $p$ is the solution of the following adjoint equation:
$$-\Delta p=\frac{1}{\alpha}(y-y_d)~\text{in}~\Omega,\quad p=0~ \text{on}~\Gamma.$$
Hence, an inexact version of the ADMM (\ref{ADMM_se}) can be proposed for the problem (\ref{ell_obj})--(\ref{ell_state}) by changing the inexactness criterion (\ref{inexact_criterion}) in Algorithm \ref{InADMM_algorithm} as the one defined in (\ref{ell_inexact}). For succinctness, we omit the details.

\subsubsection{Numerical Results}

Now, we test the ADMM (\ref{ADMM_se}) with the inexactness criterion (\ref{ell_inexact}) for the problem (\ref{ell_obj})--(\ref{ell_state}), and compare it with the SSN method in \cite{porcelli2015preconditioning}.

\medskip
\noindent\textbf{Example 4}. Let $\Omega=\{(x_1,x_2)\in \mathbb{R}^2|0<x_1<1, 0<x_2<1\}$. We consider the example given in \cite{hinze12}, where the admissible set is specified as
$$
\mathcal{C} =\{u\in L^\infty(\Omega)| 0.3\leq u(x_1,x_2)\leq 1, ~\text{a.e.~in}~ \Omega \}\subset L^2(\Omega),
$$
and the desired state is given by
$
y_d = 4\pi^2\alpha \sin (\pi x_1) \sin (\pi x_2)+	y_r.
$
Here, the function $y_r$ denotes the solution to the problem
\begin{equation*}
-\Delta y_r=r ~\text{in}~\Omega,\quad
y_r=0~\text{on}~ \Gamma,
\end{equation*}
where $r = \min\left \{1, \max\left \{0.3, 2\sin(\pi x_1)\sin(\pi x_2)\right \}\right\}$. It follows from the construction of $y_d$ and $r$ that $u^* := r$ is the unique solution of this example.

To solve the resulting $u$-subproblem (\ref{ADMM_use}) and meet the inexactness criterion (\ref{ell_inexact}), we first derive its dual problem which is an unconstrained quadratic optimization problem in terms of the adjoint variable $p$, and then employ a preconditioned conjugate gradient (PCG) method (see e.g., Algorithm 2.3 in \cite{schiela2014operator}) with the preconditioner proposed in \cite{pearson2012}. Accordingly, an ADMM--PCG iterative scheme can be proposed for the problem (\ref{ell_obj})--(\ref{ell_state}). To implement it, we set the initial values as $u=0.5$, $z=0$, $\lambda=0$, the penalty parameter $\beta=2$, and tolerance $tol=10^{-7}$.

For the numerical implementation of the SSN method in \cite{porcelli2015preconditioning}, we follow all steps in the original paper, including the finite element discretization, the preconditioned GMRES solver for Newton systems, and the stopping criteria for inner iterations. The initial values of the SSN method are set as $u=0.5$, $y=0.5$, $p=0$ and $\mu=0$, where $\mu=\mu_a+\mu_b$ with $\mu_a,\mu_b$ the Lagrange multipliers associated with the lower and upper bound of control constraints, as defined by (2.2) in \cite{porcelli2015preconditioning}. We follow \cite{porcelli2015preconditioning}
and terminate the SSN iterations when the nonlinear residual $F(u_k; y_k; p_k; \mu_k)$ (see (2.4) in \cite{porcelli2015preconditioning}) is sufficiently small, i.e.,
$F(u_k; y_k; p_k; \mu_k)\leq 10^{-8}$. We set $\alpha=10^{-4}$ in (\ref{ell_obj}) and test various mesh sizes $h=2^{-i}$ with $i=5,6,7,8,9$. Numerical results of the SSN in \cite{porcelli2015preconditioning} and the ADMM--PCG iterative scheme are reported in Table \ref{com_table}.
\begin{table}[ht]
	\setlength{\abovecaptionskip}{0pt}
	\setlength{\belowcaptionskip}{5pt}
	\centering
	\caption{Numerical comparison of the SSN in \cite{porcelli2015preconditioning} and the ADMM--PCG for Example 4.}
	{\small\begin{tabular}{|c|c|c|c|c|c|}
			\hline  Algorithm&$h$&No. of outer iterations & CPU Time (s)&$\|u-u^*\|_{L^2(\Omega)}$ \\
			\hline    &$2^{-5}$& 5 & 0.4817   & $5.8269\times10^{-5}$
			\\
			\cline{2-5}
			
			&$2^{-6}$&6 &  0.8948  & $1.4671\times10^{-5}$
			\\
			\cline{2-5}
			SSN &$2^{-7}$& 6&  3.8564  & $3.6631\times10^{-6}$
			\\
			\cline{2-5}
			&$2^{-8}$& 6& 13.6203   & $9.1543\times10^{-7}$
			\\
			\cline{2-5} &$2^{-9}$& 6& 54.7350   & $2.2885\times10^{-7}$
			\\
			\hline    &$2^{-5}$&41 &  0.3211   & $5.8405\times10^{-5}$
			\\
			\cline{2-5}
			
			&$2^{-6}$&43& 0.6071   & $1.4676\times10^{-5}$
			\\
			\cline{2-5}
			ADMM--PCG &$2^{-7}$&42 &  2.1962  & $3.7458\times10^{-6}$
			\\
			\cline{2-5}
			&$2^{-8}$& 41&   8.1225 & $9.2369\times10^{-7}$
			\\
			\cline{2-5} &$2^{-9}$&41 & 32.5952   & $2.3482\times10^{-7}$
			\\
			\hline
		\end{tabular}\label{com_table}}
\end{table}

From Table \ref{com_table}, we observe that the ADMM--PCG converges even faster than the SSN method in \cite{porcelli2015preconditioning}, especially when mesh sizes are small. It requires more iteration numbers, but its computation load per iteration is much less because it is free from solving Newton systems in its iterations. Hence, the ADMM-PCG is another efficient method that can be used for elliptic optimal control problems.

\section{Conclusions}\label{sec:conclusion}
In this paper, we focused on the implementation of the well-known alternating direction method of multipliers (ADMM) to parabolic optimal control problems with control constraints. Direct implementation of ADMM decouples the control constraint and the parabolic state equation at each iteration, while the resulting unconstrained parabolic optimal control subproblems should be solved inexactly. Hence, only inexact versions of the ADMM are implementable for these problems. We proposed an easily implementable inexactness criterion for these subproblems; and obtained an inexact version of the ADMM whose execution consists of two-layer nested iterations. The strong global convergence of the resulting inexact ADMM was proved rigorously in an infinite-dimensional Hilbert space; and the worst-case convergence rate measured by the iteration complexity was also established. We illustrated by the CG method how to execute the inexactness criterion, and showed the efficiency of the resulting ADMM--CG iterative scheme numerically. In particular, our numerical results validate that usually a few internal CG iterations are sufficient to guarantee the overall convergence of the ADMM--CG; hence there is no need to solve the unconstrained parabolic optimal control problem at each iteration up to a high precision. This fact significantly saves computation and contributes to the efficiency of the ADMM--CG. As mentioned in Remark \ref{remark-criterion}, the new inexactness criterion possesses a variety of features that are software-friendly and hence important for softwarization and industrialization. In this sense, we follow the fundamental concept of trustworthiness in software engineering (also in artificial intelligence) and call the proposed inexact ADMM, or more concretely Algorithm \ref{ADMM_CG}, a trustworthy algorithm.

We also briefly discussed how to extend the inexact ADMM to other optimal control problems, including optimal control problems constrained by the wave equation with control constraints, and elliptic optimal control problems with control constraints. Our philosophy in algorithmic design can be easily extended to these problems; hence the proposed inexact ADMM can be deliberately specified as various algorithms for a wide range of optimal control problems. For some challenging problems whose numerical study is limited (such as the general case of (\ref{Basic_Problem})--(\ref{state_eqn}) or (\ref{wave_control})--(\ref{wave_eqn}) where $\omega\subsetneq \Omega$ and $d\ge2$), the algorithms specified from the inexact ADMM are attractive in senses of numerical performance and easiness of coding. For some relatively easier problems that have been well studied (such as elliptic optimal control problems), the algorithms specified from the inexact ADMM can also be very competitive with state-of-the-art iterative schemes in the literature. It is interesting and much more challenging to design operator splitting type algorithms for optimal control problems constrained by some nonlinear PDEs in the future.

\bibliographystyle{amsplain}

\begin{thebibliography}{10}
		\bibitem{andrade2012multigrid}
		{\sc S.~G.~Andrade and A.~Borz{\`\i}}, {\em Multigrid second-order accurate solution of parabolic control-constrained problems}, Computational Optimization and Applications, 51 (2012), pp.~835--866.
		
		\bibitem{attouch2008augmented}
		{\sc H.~Attouch and M.~Soueycatt}, {\em Augmented Lagrangian and proximal alternating direction methods of multipliers in Hilbert spaces: applications to games, PDE's and control}, Pacific Journal of Optimization, 5 (2008),
		pp.~17--37.
		
		\bibitem{barker2015domain}
		{\sc A.~T.~Barker and M.~Stoll}, {\em Domain decomposition in time for PDE-constrained optimization}, Computer Physics Communications, 197 (2015),
		pp.~136--143.
		
		\bibitem{berggren1996computational}
		{\sc M.~Berggren, R.~Glowinski and J.~L.~Lions}, {\em A computational approach to controllability issues for flow-related models. (I): pointwise control of the viscous Burgers equation}, International Journal of Computational Fluid
		Dynamics, 7 (1996), pp.~237--252.
		\bibitem{BKI99}
		{\sc  M. Bergounioux, K. Ito and K. Kunisch}, \emph{Primal-dual strategy for constrained optimal control problems}, SIAM Journal on Control and Optimization, 37 (1999), pp. 1176--1194.
		
		\bibitem{borzi2003multigrid}
		{\sc A.~Borz{\`i}}, {\em Multigrid methods for parabolic distributed optimal control problems}, Journal of Computational and Applied Mathematics, 157 (2003), pp.~365--382.
		
		\bibitem{boyd2011distributed}
		{\sc S.~Boyd, N.~Parikh, E.~Chu, B.~Peleato and J.~Eckstein}, {\em Distributed optimization and statistical learning via the alternating direction method of multipliers}, Foundations and Trends {\textregistered} in
		Machine learning, 3 (2011), pp.~1--122.
		
		\bibitem{bredies2015preconditioned}
		{\sc K.~Bredies and H.~Sun}, {\em Preconditioned Douglas-Rachford splitting methods for convex-concave saddle-point problems}, SIAM Journal on Numerical Analysis, 53 (2015),
		pp.~421--444.
		
		\bibitem{BTY2014}
		{W.~Bu, Y.~Tang and J.~Yang}, {\em Galerkin Finite Element Method for Two-dimensional Riesz Space Fractional Diffusion Equations}, Journal of Computational Physics, 276, pp.~26--38, 2014.
		\bibitem{clark1990}F. H. Clarke, \emph{Optimization and Nonsmooth Analysis.} Vol. 5. SIAM, 1990.
		
		\bibitem{Eckstein1992douglas}
		{\sc J.~Eckstein and D.~Bertsekas}, {\em On the Douglas--Rachford splitting method and the proximal point
			algorithm for maximal monotone operators}, Mathematical Programming, 55 (1992), pp.~293--318.
		
		\bibitem{Eckstein2017relative}
		{\sc J.~Eckstein and W.~Yao}, {\em Relative-error approximate versions of Douglas--Rachford splitting and special cases of the ADMM}, Mathematical Programming, 170 (2018), pp.~417--444.
		
		\bibitem{facchinei2007finite}
		{\sc F.~Facchinei and J.-S. Pang}, {\em Finite-dimensional Variational Inequalities and Complementarity Problems}, Springer Science \& Business Media, 2007.
		
		\bibitem{fortin1983augmented}
		{\sc M.~Fortin and R.~Glowinski}, {\em Augmented Lagrangian Methods: Applications to the Numerical Solution of Boundary-value Problems}, North-Holland, Amsterdam, 1983.
		
		\bibitem{gabay1975dual}
		{\sc D.~Gabay and B.~Mercier}, {\em A dual algorithm for the solution of non linear variational problems via finite element approximation}. Computers \& Mathematics with Applications 2 (1976): pp.~17--40.
		
		
		\bibitem{gander2016schwarz}
		{\sc M.~J.~Gander and F.~Kwok}, {\em Schwarz methods for the time-parallel solution of parabolic control problems}, Domain Decomposition Methods in Science and Engineering XXII, Springer, 2016, pp.~207--216.
		
		
		\bibitem{glowinski2003finite}
		{\sc R.~Glowinski}, {\em Finite Element Methods for Incompressible Viscous Flow}, Handbook of Numerical Analysis, 9
		(2003), pp.~3--1176.
		
		\bibitem{glowinski2008lectures}
		{\sc R.~Glowinski}, {\em Lectures on Numerical Methods for Non-linear Variational Problems}, Springer Science \& Business Media, 2008.
		
		\bibitem{glowinski2014alternating}
		{\sc R.~Glowinski}, {\em On alternating direction methods of multipliers: a historical perspective}, in Modeling, Simulation and Optimization for Science and Technology, Springer, 2014, pp.~59--82.
		
		\bibitem{glowinski2015variational}
		{\sc R.~Glowinski}, {\em Variational Methods for the Numerical Solution of Nonlinear Elliptic Problems}, SIAM, 2015.
		
		\bibitem{glowinski1989augmented}
		{\sc R.~Glowinski and P.~Le~Tallec}, {\em Augmented Lagrangian and Operator-splitting Methods in Nonlinear Mechanics}, Vol.~9, SIAM, 1989.
		
		\bibitem{glowinski1994exact}
		{\sc R.~Glowinski and J.~L.~Lions}, {\em Exact and approximate controllability for distributed parameter systems, Part I}, Acta Numerica, 3 (1994), pp.~269--378.
		
		\bibitem{glowinski1995exact}
		{\sc R.~Glowinski and J.~L.~Lions}, {\em Exact and approximate controllability for distributed parameter systems, Part II}, Acta Numerica, 4 (1995), pp.~159--328.
		
		\bibitem{glowinski2008exact}
		{\sc R.~Glowinski, J.~L.~Lions and J.~He}, {\em Exact and Approximate Controllability for Distributed Parameter Systems: A Numerical Approach (Encyclopedia of Mathematics and its Applications)}, Cambridge University Press, 2008.
		
		\bibitem{glowinski1975approximation}
		{\sc R.~Glowinski and A.~Marroco}, {\em Sur l'approximation, par {\'e}l{\'e}ments finis d'ordre un, et la r{\'e}solution, par p{\'e}nalisation-dualit{\'e} d'une classe de probl{\`e}mes de dirichlet non lin{\'e}aires}, Revue fran{\c{c}}aise d'automatique, informatique, recherche
		op{\'e}rationnelle. Analyse Num{\'e}rique, 9 (1975), pp.~41--76.
		
		\bibitem{GSY2019}{\sc R.~Glowinski, Y. Song and X. Yuan}, {\em An ADMM numerical approach to linear parabolic state constrained optimal control problems}, Numerische Mathematik, 144 (2020), pp.~931--966.
		
		\bibitem{haoalternating}
		{\sc Y.~Hao, X.~Wang, H.~Song and K.~Zhang}, {\em An alternating direction method of multipliers for the optimization problem constrained with a stationary Maxwell system}, Communications in Computational Physics, 24 (2018), pp.~1435--1454.
		
		\bibitem{He2002new}
		{\sc B.~He, L.~Z.~Liao, D.~Han and H.~Yang}, {\em A new inexact alternating directions method for monotone variational inequalities}, Mathematical Programming, 92 (2002), pp.~103--118.
		
		\bibitem{he20121}
		{\sc B.~He and X.~Yuan}, {\em On the O(1/n) convergence rate of the Douglas--Rachford alternating direction method}, SIAM Journal on Numerical Analysis, 50 (2012), pp.~700--709.
		
		\bibitem{he2015non}
		{\sc B.~He and X.~Yuan}, {\em On non-ergodic convergence rate of Douglas--Rachford alternating direction method of multipliers}, Numerische Mathematik, 130 (2015), pp.~567--577.
		
		\bibitem{heinkenschloss2005time}
		{\sc M.~Heinkenschloss}, {\em A time-domain decomposition iterative method for the solution of distributed linear quadratic optimal control problems}, Journal of Computational and Applied Mathematics, 173 (2005), pp.~169--198.
		
		\bibitem{HMR69} {\sc M. R. Hestenes}, \emph{ Multiplier and gradient methods,} Journal of Optimization Theory and Applications, 4 (1969), pp. 303--320.
		
		
		\bibitem{hintermuller2002primal}
		{\sc M.~Hinterm{\"u}ller, K.~Ito and K.~Kunisch}, {\em The primal-dual active set strategy as a semismooth Newton method}, SIAM Journal on Optimization, 13
		(2002), pp.~865--888.
		
		\bibitem{HKV09}{\sc M. Hinterm{\"u}ller, I. Kopacka and S. Volkwein}, {\em Mesh-independence and preconditioning for solving parabolic control problems with mixed control-state constraints}, ESAIM: Control, Optimisation and Calculus of Variations 15 (2009), pp.~626--652.	
		
		
		\bibitem{hinze2008optimization}
		{\sc M.~Hinze, R.~Pinnau, M.~Ulbrich and S.~Ulbrich}, {\em Optimization with PDE Constraints}, Vol.~23, Springer Science \& Business Media, 2008.
		
		\bibitem{hinze12}
		{\sc M. Hinze and M. Vierling},{\em The semi-smooth Newton method for variationally discretized
			control constrained elliptic optimal control problems; implementation, convergence and
			globalization}, Optimization Methods and Software 27 (2012), pp. 933--950
		
		\bibitem{KKV2011}
		{\sc A. Kr{\"o}ner, K. Kunisch and B. Vexler}, {\em Semismooth Newton methods for optimal control of the wave equation with control constraints}, SIAM Journal on Control and Optimization 49 (2011), pp.~830--858.
		
		\bibitem{KR2002}
		{\sc  K. Kunisch and  A. R{\"o}sch},  \emph{Primal-dual active set strategy for a general class of constrained optimal control problems}, SIAM Journal on Optimization, 13 (2002), pp.~321--334.
		
		\bibitem{LLX2016}
		{\sc B. Li, J. Liu and M. Xiao}, {\em A fast and stable preconditioned iterative method for optimal control problem of wave equations}, SIAM Journal on Scientific Computing 37 (2015), pp.~A2508--A2534.
		
		\bibitem{lions1971optimal}
		{\sc J.~L.~Lions}, {\em Optimal Control of Systems Governed by Partial Differential Equations (Grundlehren der Mathematischen Wissenschaften)}, Vol.~170, Springer Berlin, 1971.
		
		\bibitem{LP2019}
		{\sc J. Liu, and J. W. Pearson}, {\em Parameter-robust preconditioning for the optimal control of the wave equation}, Numerical Algorithms, 83 (2020), pp.~1171--1203.
		
		\bibitem{mathew2010analysis}
		{\sc T.~P.~Mathew, M.~Sarkis and C.~E.~Schaerer}, {\em Analysis of block parareal preconditioners for parabolic optimal control problems}, SIAM Journal on Scientific Computing, 32 (2010), pp.~1180--1200.
		
		\bibitem{mcdonald2016all}
		{\sc E.~McDonald}, {\em All-at-once solution of time-dependent PDE problems},
		PhD thesis, University of Oxford, 2016.
		
		\bibitem{meidner2008priori}
		{\sc D.~Meidner and B.~Vexler}, {\em A priori error estimates for space-time finite element discretization of parabolic optimal control problems part II: problems with control constraints}, SIAM Journal on Control and Optimization, 47 (2008), pp.~1301--1329.
		
		
		\bibitem{Ng2011inexact}
		{\sc M.~Ng, F.~Wang and X.~Yuan}, {\em Inexact alternating direction methods for image recovery}, {SIAM} Journal on Scientific Computing, 33 (2011), pp.~1643--1668.
		
		\bibitem{pr1955}
		{\sc D. W. Peaceman and H. H. Rachford, Jr}. {\em The numerical solution of parabolic and elliptic differential equations}, Journal of the Society for Industrial and Applied Mathematics, 3 (1955), pp. 28--41.
		
		\bibitem{PG2017}
		{\sc J.W. Pearson and J. Gondzio,} \emph{Fast interior point solution of quadratic programming problems arising from PDE-constrained optimization}, Numerische Mathematik, 137 (2017), pp.~959--999.
		
		
		\bibitem{pearson2012regularization}
		{\sc J.~W.~Pearson, M.~Stoll and A.~J. Wathen}, {\em Regularization-robust preconditioners for time-dependent PDE-constrained optimization problems},
		SIAM Journal on Matrix Analysis and Applications, 33 (2012), pp.~1126--1152.
		
		\bibitem{pearson2012}
		{\sc J.W. Pearson and A.J. Wathen}, {\em A new approximation of the Schur complement in preconditioners for PDE-constrained optimization}, Numerical Linear Algebra with Applications, 19 (2012), pp. 816--829.
		
		
		\bibitem{porcelli2015preconditioning} {\sc M.~Porcelli, V.~Simoncini and M.~Tani}, {\em Preconditioning of active-set Newton methods for PDE-constrained optimal control problems}, SIAM Journal on Scientific Computing, 37 (2015), pp.~S472--S502.
		
		\bibitem{PJD69}{\sc M. J. D. Powell}. \emph{A method for nonlinear constraints in minimization problems,} in Optimization,
		R. Fletcher, ed., Academic Press, New York, NY, (1969), pp. 283--298.
		
		\bibitem{saad2003iterative}
		{\sc Y.~Saad}, {\em Iterative Methods for Sparse Linear Systems}, Vol.~82, SIAM, 2003.
		
		\bibitem{S96}
		{\sc M. A. Saunders}, \emph{Cholesky-based methods for sparse least squares: the benefits of regularization.} Linear and nonlinear conjugate gradient-related methods 100 (1996), pp: 92--100.
		
		
		\bibitem{schiela2014operator}
		{\sc A.~Schiela and S.~Ulbrich}, {\em Operator preconditioning for a class of inequality constrained optimal control problems}, SIAM Journal on Optimization, 24 (2014), pp.~435--466.
		
		\bibitem{SB2017}
		{\sc A. Schindele and A. Borz{\`i}}. {\em Proximal schemes for parabolic optimal control problems with sparsity promoting cost functionals}. International Journal of Control, 90 (2017), pp.~2349--2367.
		
		
		\bibitem{song2018two}
		{\sc X.~L.~Song and B.~Yu}, {\em A two-phase strategy for control constrained elliptic optimal control problems}, Numerical Linear Algebra with
		Applications,  (2018), pp.~e2138.
		
		\bibitem{stoll2013one}
		{\sc M.~Stoll}, {\em One-shot solution of a time-dependent time-periodic PDE-constrained optimization problem}, IMA Journal of Numerical Analysis, 34 (2013), pp.~1554--1577.
		
		\bibitem{SW12}{\sc M. Stoll and A. Wathen.} {\em Preconditioning for partial differential equation constrained optimization with control constraints}. Numerical Linear Algebra with Applications 19 (2012), pp. 53--71.
		
		\bibitem{troltzsch2010optimal}
		{\sc F.~Tr{\"o}ltzsch}, {\em Optimal Control of Partial Differential Equations: Theory, Methods, and Applications}, Vol.~112, American Mathematical Society, 2010.
		
		\bibitem{ulbrich2011semismooth}
		{\sc M.~Ulbrich}, {\em Semismooth Newton Methods for Variational Inequalities and Constrained Optimization Problems in Function Spaces}, Vol.~11, SIAM, 2011.
		
		\bibitem{ulbrich2007generalized}
		{\sc S.~Ulbrich}, {\em Generalized SQP methods with “parareal” time-domain decomposition for time-dependent PDE-constrained optimization}, Real-time PDE-constrained optimization, SIAM, 2007, pp.~145--168.
		
		\bibitem{ulbrich2015preconditioners}
		{\sc S.~Ulbrich}, {\em Preconditioners based on “parareal” time-domain decomposition for time-dependent PDE-constrained optimization}, Multiple Shooting and Time Domain Decomposition Methods, Springer, 2015, pp.~203--232.
		
		\bibitem{yuan2005improvement}
		{\sc X.~Yuan}, {\em The improvement with relative errors of He et al.'s inexact alternating direction method for monotone variational inequalities},
		Mathematical and Computer Modelling, 42 (2005), pp.~1225--1236.
		
		\bibitem{yue2018implementing}
		{\sc H.~Yue, Q.~Yang, X.~Wang and X.~Yuan}, {\em Implementing the alternating direction method of multipliers for big datasets: A case study of least absolute shrinkage and selection operator}, SIAM Journal on Scientific Computing, 40 (2018), pp.~A3121--A3156.	
	\end{thebibliography}
{\small
		
}
\end{document}